\newtheorem{theorem}{Theorem}[section]
\newtheorem{lemma}[theorem]{Lemma}
\newtheorem{definition}[theorem]{Definition}
\newtheorem{corollary}[theorem]{Corollary}
\newtheorem{remark}[theorem]{Remark}
\newtheorem{example}[theorem]{Example}
\newtheorem{note}[theorem]{Note}
\newcommand{\from}{\colon}
\begin{document}
\title[Zero-Divisor Graphs of $\mathbb{Z}_n$,  their products and $D_n$]{Zero-Divisor Graphs of $\mathbb{Z}_n$,  their products and $D_n$}

\author{Amrita Acharyya}
\address{Department of Mathematics and Statistics\\
University of Toledo,  Main Campus\\
Toledo,  OH 43606-3390}
\email{Amrita.Acharyya@utoledo.edu}

\author{Robinson Czajkowski}
\address{Department of Mathematics and Statistics\\
University of Toledo,  Main Campus\\
Toledo,  OH 43606-3390}
\email{ Robinson.Czajkowski@rockets.utoledo.edu}

\subjclass[2010]{68R10,  68R01,  03G10,  13A99}

\keywords{zero-divisor graph, commutative ring, finite products, poset, type graph}

\begin{abstract}
This paper is an endeavor to discuss some properties of zero-divisor graphs of the ring $\mathbb{Z}_n$,  the ring of integers modulo $n$. The zero divisor graph of a commutative ring $R$,  is an undirected graph whose vertices are the nonzero zero-divisors of $R$,  where two distinct vertices are adjacent if their product is zero. The zero divisor graph of $R$ is denoted by $\Gamma(R)$. We discussed $\Gamma(\mathbb{Z}_n)$'s by the attributes of completeness,  k-partite structure,  complete k-partite structure,  regularity,  chordality,  $\gamma - \beta$ perfectness,  simplicial vertices. The clique number for arbitrary $\Gamma(\mathbb{Z}_n)$ was also found. This work also explores related attributes of finite products $\Gamma(\mathbb{Z}_{n_1}\times\cdots\times\mathbb{Z}_{n_k})$,  seeking to extend certain results to the product rings. We find all $\Gamma(\mathbb{Z}_{n_1}\times\cdots\times\mathbb{Z}_{n_k})$ that are perfect. Likewise,  a lower bound of clique number of $\Gamma(\mathbb{Z}_m\times\mathbb{Z}_n)$ was found. Later,  in this paper we discuss some properties of the zero divisor graph of the poset $D_n$,  the set of positive divisors of a positive integer $n$ partially ordered by divisibility.
\end{abstract}

\maketitle


\section{Introduction} \label{s:Intro}

Zero-divisor graphs were first discussed by Beck~\cite{nB66} as a way to color commutative rings. They were further discussed by Livingston and Anderson in ~\cite{jS83} and ~\cite{jW98}. A zero-divisor graph of a ring $R$,  denoted by $\Gamma(R)$,  is a graph whose vertices are all the zero divisors of $R$. Two distinct vertices $u$ and $v$ are adjacent if $uv = 0$. Beck~\cite{nB66} considered every element of $R$ a vertex,  with 0 sharing an edge with all other vertices. Since then,  others have chosen to omit 0 from zero-divisor graphs [2,  3,  4,  5]. For our purposes,  we omit 0 so that the vertex set of $\Gamma(\mathbb{Z}_n)$ denoted by $ZD(\mathbb{Z}_n)$ will only be the non-zero zero-divisors.\\
In the first section,  we explore a concept explored by Smith ~\cite{jK55} called type graphs. In ~\cite{jK55},  type graphs were used to find all perfect $\Gamma(\mathbb{Z}_n)$. We extended the notion of type graphs for $\Gamma(\mathbb{Z}_{n_1}\times\cdots\times\mathbb{Z}_{n_k})$ to find all perfect zero-divisor graphs of such products, where $n_1, n_2, \cdots , n_k$ are positive integers and $\Gamma(\mathbb{Z}_{n_1}\times\cdots\times\mathbb{Z}_{n_k})$ is the direct product of $Z_{n_i}$s, $1\le i\le k$. We then move on to various properties of $\Gamma(\mathbb{Z}_n)$ and $\Gamma(\mathbb{Z}_{n_1}\times\cdots\times\mathbb{Z}_{n_k})$. AbdAlJawad and Al-Ezeh ~\cite{bH77} discussed the domination number of $\Gamma(\mathbb{Z}_n)$. We extend this result to find an upper bound and lower bound for the domination number of finite product $\Gamma(\mathbb{Z}_{n_1}\times\cdots\times\mathbb{Z}_{n_k})$ and discussed coefficient of smallest degree of domination polynomial of $\Gamma(\mathbb{Z}_n)$. In the last section,  we explore zero divisor graphs of the poset $D_n$,  the set of positive divisors of a positive integer $n$ partially ordered by divisibility and we catalog them in a similar way. Zero divisor graph of poset is studied in \cite{jW99}, \cite{jW100}, \cite{jW101}.


\section{Type Graphs}

When we consider zero-divisor graphs of $\Gamma(\mathbb{Z}_n)$,  it is useful to consider the type graphs of these rings. A type graph has vertices of $T_a$ where $a$ is a factor of $n$ that is neither 1 nor 0. The set of all $T_i$ forms a partition of the zero divisor graph by $T_a = \{ x \in ZD(\mathbb{Z}_n) | gcf(x,  n) = a \}$. This concept was shown by Smith ~\cite{jK55},  where the type graph was used to find all perfect $\Gamma(\mathbb{Z}_n)$. Smith used the notation $\Gamma^T(\mathbb{Z}_n)$ to denote the type graph. In that paper,  four key observations were shown to be true regarding the type graphs on $\mathbb{Z}_n$. In this section,  we modify the definition of type graph to fit the graph of $\mathbb{Z}_{n_1}\times\mathbb{Z}_{n_2}\times\cdots\times\mathbb{Z}_{n_k}$. Additionally,  we show these observations to be true over this type graph as well. We then use analogues of some theorems from ~\cite{jK55} to characterize the perfectness of $\Gamma(\mathbb{Z}_{n_1}\times\mathbb{Z}_{n_2}\times\cdots\times\mathbb{Z}_{n_k})$.\\

The following are two important theorems from ~\cite{jK55}.

\begin{theorem}[Smith's Main Theore] ~\cite{jK55} 
	A graph $\Gamma(\mathbb{Z}_n)$ is perfect iff $n$ is of one of the following forms:
	\begin{enumerate}	
		\item[1.] $n = p^a$ for prime $p$ and positive integer $a$.
		\item[2.] $n = p^aq^b$ for distinct primes $p, q$ and positive integers $a, b$.
		\item[3.] $n = p^aqr$ for distinct primes $p, q, r$ and positive integer $a$.
		\item[4.] $n = pqrs$ for distinct primes $p, q, r, s$.
	\end{enumerate}
\end{theorem}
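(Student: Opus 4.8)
The plan is to reduce the perfection of $\Gamma(\mathbb{Z}_n)$ to that of its type graph and then to analyze the latter purely in terms of the exponents in the factorization $n = p_1^{e_1}\cdots p_k^{e_k}$. First I would record that $\Gamma(\mathbb{Z}_n)$ is a ``blow-up'' of the type graph: each class $T_a$ induces either a clique (when $n\mid a^2$) or an independent set (otherwise), and for $a\neq b$ every vertex of $T_a$ is joined to every vertex of $T_b$ exactly when $n\mid ab$. Consequently $\Gamma(\mathbb{Z}_n)$ is obtained from a single representative of each class by repeated true- and false-twin insertions. By the substitution property of perfect graphs (Lov\'asz), substituting a clique or an independent set for a vertex preserves perfection, while induced subgraphs of perfect graphs are perfect; hence $\Gamma(\mathbb{Z}_n)$ is perfect if and only if the simple type graph $\Gamma^T(\mathbb{Z}_n)$ is. Identifying a divisor $a$ with its exponent vector $\alpha=(\alpha_1,\dots,\alpha_k)$, the adjacency rule becomes $\alpha\sim\beta \iff \alpha_i+\beta_i\ge e_i$ for all $i$, so $\Gamma^T(\mathbb{Z}_n)$ depends only on $(e_1,\dots,e_k)$.

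Next I would invoke the Strong Perfect Graph Theorem, so that it suffices to decide, for each exponent vector, whether $\Gamma^T(\mathbb{Z}_n)$ contains an induced odd hole or odd antihole. For the forward (``only if'') direction I would show that every $n$ outside the four listed forms has its type graph containing one of three minimal obstructions, realized by the patterns $pqrst$ (five primes), $p^2qrs$ (four primes, one squared), and $p^2q^2r$ (three primes, two squared). The cleanest case is $k\ge 5$: taking the divisors $a_i$ ($i\in\mathbb{Z}_5$) whose exponent is $0$ in coordinates $i,i+1$ and full in every other coordinate, one checks $a_i\sim a_j \iff \{i,i+1\}\cap\{j,j+1\}=\emptyset$, which is exactly an induced $C_5$. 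To pass from the minimal patterns to arbitrary larger exponents I would prove a monotonicity lemma: in any coordinate with $e_i\ge 2$ the value set $\{0,e_i-1,e_i\}$ reproduces the adjacency pattern of exponent $2$, and $\{0,e_i\}$ reproduces exponent $1$; restricting each coordinate to such a value set realizes the type graph of the reduced pattern as an induced subgraph, so an odd hole in a minimal pattern lifts to one in $\Gamma^T(\mathbb{Z}_n)$.

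For the backward (``if'') direction I would verify that the type graphs of the four admissible families are perfect. For $n=p^a$ the rule $\alpha\sim\beta\iff\alpha+\beta\ge a$ exhibits $\Gamma^T$ as a threshold graph, which is chordal and hence perfect; for the remaining families I would either recognize the type graph as a comparability or co-comparability graph or argue directly, via the Strong Perfect Graph Theorem, that the threshold/union adjacency conditions are too rigid to support an induced $C_5, C_7,\dots$ or any of their complements (for instance, the pentagram construction above provably needs five independent ``deficient pairs,'' which are unavailable once $k\le 4$). I expect this direction to be the main obstacle: ruling out \emph{all} odd holes and, especially, all odd antiholes uniformly across $p^aq^b$, $p^aqr$, and $pqrs$ is delicate, since here one cannot simply exhibit a witness but must establish a structural property of an entire family. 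The imperfect direction, by contrast, reduces to a finite verification on the three minimal patterns together with the monotonicity lemma.
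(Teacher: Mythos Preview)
The paper does not prove this theorem at all: it is stated with attribution to Smith~\cite{jK55} and used as a black box. There is therefore no ``paper's own proof'' to compare your proposal against. What the paper does import from Smith is exactly your first reduction step, quoted as Theorem~2.2: $\Gamma(\mathbb{Z}_n)$ is perfect iff its type graph $\Gamma^T(\mathbb{Z}_n)$ is perfect. Your justification of that reduction via Lov\'asz's substitution lemma (each type class is a clique or an independent set, so $\Gamma(\mathbb{Z}_n)$ is a blow-up of $\Gamma^T(\mathbb{Z}_n)$) is a clean and correct alternative to the hole/antihole transfer arguments the paper sketches in Theorems~2.13--2.14 for the product setting.

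On the substance of your plan: the ``only if'' direction is in good shape. Your $C_5$ for $k\ge 5$ is correct (the five divisors $a_i$ with zeros in coordinates $i,i{+}1$ over $\mathbb{Z}_5$ do give an induced pentagon), and your monotonicity lemma---restricting coordinate $i$ to $\{0,e_i\}$ or $\{0,e_i{-}1,e_i\}$ to simulate exponent $1$ or $2$---is valid and reduces the imperfect side to checking the three minimal patterns $pqrst$, $p^2qrs$, $p^2q^2r$. You should, however, actually exhibit the odd hole in the latter two; you only wrote it out for $pqrst$. The ``if'' direction is where you are honest about the difficulty, and rightly so: your suggestion to recognize the type graphs of $p^a$, $p^aq^b$, $p^aqr$, $pqrs$ as threshold, comparability, or co-comparability graphs is the right instinct, but as written it is a plan rather than a proof. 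In short, your outline is sound and matches the strategy of the source Smith paper that this paper cites; the remaining work is the explicit verification on the minimal imperfect patterns and a uniform structural argument for the four perfect families.
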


\begin{theorem}[Simth's Theorem 4.1] ~\cite{jK55}
$\Gamma(\mathbb{Z}_n)$ is perfect iff its type graph $\Gamma^T(\mathbb{Z}_n)$ is perfect.\\
\end{theorem}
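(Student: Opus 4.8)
The plan is to realize $\Gamma(\mathbb{Z}_n)$ as a vertex blow-up (substitution) of its type graph $\Gamma^T(\mathbb{Z}_n)$ and then invoke the fact that the class of perfect graphs is closed under substitution. The engine of the argument is a single number-theoretic lemma describing adjacency in terms of type classes, after which both implications become short.

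First I would prove the structural dictionary: for distinct $x, z \in ZD(\mathbb{Z}_n)$, one has $xz \equiv 0 \pmod n$ if and only if $n \mid \gcd(x,n)\gcd(z,n)$. Writing $n = \prod p_i^{e_i}$ and comparing $p_i$-adic valuations prime by prime, the forward direction is immediate from $\gcd(x,n) \mid x$, and the reverse follows from the elementary inequality $\min(v_{p_i}(x),e_i)+\min(v_{p_i}(z),e_i) \ge e_i$ whenever $v_{p_i}(x)+v_{p_i}(z)\ge e_i$. Since the right-hand side depends only on $\gcd(x,n)$ and $\gcd(z,n)$, adjacency is determined entirely by class membership. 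This yields exactly the four observations referenced from Smith: any two vertices of a class $T_a$ are twins with identical outside neighborhoods; the class $T_a$ induces a clique when $n\mid a^2$ and an independent set otherwise; and two distinct classes $T_a,T_b$ are either completely joined (precisely when $ab\equiv 0\pmod n$) or completely non-adjacent. In substitution language this says $\Gamma(\mathbb{Z}_n)=\Gamma^T(\mathbb{Z}_n)[\,G_a\,]$, where each vertex $T_a$ of the simple type graph is replaced by $G_a=K_{|T_a|}$ if $n\mid a^2$ and by the edgeless graph $\overline{K_{|T_a|}}$ otherwise, adding all cross edges exactly along the edges $T_aT_b$ of $\Gamma^T(\mathbb{Z}_n)$.

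For the forward implication, suppose $\Gamma^T(\mathbb{Z}_n)$ is perfect. Every substituted graph $G_a$ is a complete graph or an edgeless graph, each of which is trivially perfect. By Lov\'asz's substitution (replacement) lemma — that substituting perfect graphs for the vertices of a perfect graph produces a perfect graph — the blow-up $\Gamma^T(\mathbb{Z}_n)[\,G_a\,]=\Gamma(\mathbb{Z}_n)$ is perfect. For the converse, note that each class is nonempty, since $a\in T_a$ whenever $a\mid n$ with $1<a<n$; choosing one representative $x_a$ from each class gives a transversal whose induced subgraph has $x_a$ adjacent to $x_b$ iff $ab\equiv 0\pmod n$, i.e.\ is isomorphic to the simple type graph $\Gamma^T(\mathbb{Z}_n)$. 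As induced subgraphs of perfect graphs are perfect, $\Gamma(\mathbb{Z}_n)$ perfect forces $\Gamma^T(\mathbb{Z}_n)$ perfect.

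I expect the main obstacle to be the bookkeeping that makes the substitution identification exact rather than merely morally true: one must verify that the loop/clique data of the type graph matches the $n\mid a^2$ criterion and that cross-adjacency is genuinely all-or-nothing, which is where the valuation lemma does its work. A secondary point requiring care is that ``perfectness of the type graph'' is to be read as perfectness of the underlying simple graph, so that passing to a transversal (which forgets the loops recorded by the clique classes) is legitimate; since perfection is defined for loopless simple graphs, discarding loops is harmless and the transversal argument goes through.
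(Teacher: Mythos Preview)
Your argument is correct but follows a different line from the one the paper adopts (following Smith). The paper's route is through the Strong Perfect Graph Theorem: after establishing the four observations about type classes, it shows that any hole or antihole of length greater than $4$ in $\Gamma(\mathbb{Z}_n)$ must hit each type class at most once, so it projects to a hole or antihole of the same length in $\Gamma^T(\mathbb{Z}_n)$, and conversely any such configuration in $\Gamma^T(\mathbb{Z}_n)$ lifts back by picking representatives; perfection then transfers in both directions via SPGT. Your approach instead packages the same four observations as the single statement that $\Gamma(\mathbb{Z}_n)$ is the substitution $\Gamma^T(\mathbb{Z}_n)[G_a]$ with each $G_a$ complete or edgeless, and then invokes Lov\'asz's replacement lemma for one direction and the hereditary nature of perfection (via a transversal) for the other. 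The payoff of your route is that it avoids the Strong Perfect Graph Theorem entirely, relying only on the classical 1972 substitution closure result, and it makes transparent that the conclusion holds for any graph partitioned into ``homogeneous'' classes in this way; the paper's route, while leaning on deeper machinery, is more hands-on about the actual obstructions to perfection and dovetails with how the surrounding results in the paper are argued. One small expository wrinkle: you have the ``forward'' and ``reverse'' labels on the valuation lemma swapped (the inequality $\min(v_{p_i}(x),e_i)+\min(v_{p_i}(z),e_i)\ge e_i$ is what establishes $xz\equiv 0 \Rightarrow n\mid\gcd(x,n)\gcd(z,n)$, not the other way), but the mathematics is intact.
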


\begin{definition}[Type graph of $\mathbb{Z}_{n_1}\times\mathbb{Z}_{n_2}\times\cdots\times\mathbb{Z}_{n_k}$]

The type graph of $\mathbb{Z}_{n_1}\times\mathbb{Z}_{n_2}\times\cdots\times\mathbb{Z}_{n_k}$ denoted by $\Gamma^T(\mathbb{Z}_{n_1}\times\mathbb{Z}_{n_2}\times\cdots\times\mathbb{Z}_{n_k})$ has a vertex set of the type classes $T(x_1,  x_2, \cdots, x_k)$ where $(x_1,  x_2, \cdots, x_k) \neq (0, 0, \cdots, 0)$ nor $(1, 1, \cdots, 1)$,  and $x_i$ is a divisor of $n_i$,  1,  or 0.\\
$T(x_1, x_2, \cdots, x_k) = \{ (a_1, a_2, \cdots, a_k) \mid| a_i \in \mathbb{Z}_{n_i}/0$ and $gcf(a_i,  n_i) = x_i$ or $a_i=0$ if $x_i=0$ $\}$. Arbitrary $T(x_1, x_2, \cdots, x_k)$ shares an edge with arbitrary $T(y_1, y_2, \cdots, y_k)$ iff $x_iy_i = 0$ for all $i$.

\end{definition}
	
Smith ~\cite{jK55} gave the following four observations for the type graph of $\Gamma(\mathbb{Z}_n)$. 

\begin{theorem}\label{1.1} Each vertex of $\Gamma(\mathbb{Z}_{n})$ is in exactly one type class.
\end{theorem}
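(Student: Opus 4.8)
The plan is to verify directly that the type classes $\{T_a\}$ form a partition of the vertex set $ZD(\mathbb{Z}_n)$, which is exactly what ``each vertex lies in precisely one type class'' asserts. This amounts to checking the two defining properties of a partition: every vertex lies in some class (existence), and no vertex lies in two distinct classes (uniqueness). Both will follow from the single fact that $\gcd(\,\cdot\,,n)$ is a well-defined, single-valued function, together with the standard description of the zero-divisors of $\mathbb{Z}_n$.

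First I would fix an arbitrary vertex $x \in ZD(\mathbb{Z}_n)$ and set $d = \gcd(x,n)$. For existence, the task is to show that $d$ is a legitimate index, i.e.\ a divisor of $n$ with $1 < d < n$, so that $T_d$ is one of the declared type classes and $x \in T_d$. Since $x$ is a \emph{nonzero} zero-divisor, it is not a unit of $\mathbb{Z}_n$; as the units of $\mathbb{Z}_n$ are exactly the residues coprime to $n$, this forces $d = \gcd(x,n) \neq 1$. On the other hand $x \neq 0$ in $\mathbb{Z}_n$ means $n \nmid x$, so $d \neq n$. Hence $d$ is a proper nontrivial divisor of $n$, and $x \in T_d$ by definition of $T_d$.

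For uniqueness, I would suppose $x \in T_a$ and $x \in T_b$. By the defining equation of the type classes this gives $a = \gcd(x,n)$ and $b = \gcd(x,n)$, whence $a = b$; a vertex therefore cannot belong to two distinct classes. Combining the two parts yields that each vertex belongs to exactly one $T_a$.

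I do not expect a genuine obstacle here: the entire content is that forming $\gcd$ with $n$ assigns to each zero-divisor one, and only one, admissible divisor of $n$. The only point requiring any care is confirming that the value $d = \gcd(x,n)$ actually lands in the admissible range $1 < d < n$ rather than degenerating to $1$ or $n$, and this is precisely the step where the hypothesis that $x$ is a nonzero zero-divisor --- as opposed to a unit or $0$ --- is used.
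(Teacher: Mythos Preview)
Your proof is correct. Note that the paper does not actually give its own proof of this particular statement---it is cited from Smith~\cite{jK55} as one of four observations about type graphs of $\mathbb{Z}_n$---but the paper does prove the product analogue (Theorem~\ref{1.1 a}), and that proof follows essentially the same two-case structure you use: check that some type class contains the vertex (your existence step) and that two type classes containing the same vertex must coincide (your uniqueness step). Your argument is a clean direct version of the same idea, with the added care of verifying that $d=\gcd(x,n)$ really lands in the range $1<d<n$ so that $T_d$ is a legitimate type class.
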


\begin{theorem}\label{1.2} Arbitrary distinct vertices $T_x$ and $T_y$ share an edge in $\Gamma^T(\mathbb{Z}_{n})$ iff each $a\in T_x$ shares an edge with each $b\in T_y$ in $\Gamma(\mathbb{Z}_{n})$.
\end{theorem}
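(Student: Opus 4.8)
The plan is to prove the two implications separately, translating the type-graph adjacency condition (which for a single factor specializes from the product definition to: $T_x \sim T_y$ in $\Gamma^T(\mathbb{Z}_n)$ iff $xy \equiv 0 \pmod{n}$, equivalently $n \mid xy$) back and forth with the $\Gamma(\mathbb{Z}_n)$-adjacency condition $ab \equiv 0 \pmod{n}$. The whole argument rests on two elementary observations: first, that each divisor $x$ of $n$ with $1 < x < n$ is a representative of its own class, i.e. $x \in T_x$ since $\gcd(x,n) = x$; and second, that membership $a \in T_x$ forces $x \mid a$ (and likewise $y \mid b$ for $b \in T_y$), directly from the definition of $\gcd$. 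Since $T_x$ and $T_y$ are assumed distinct vertices, $x \neq y$, which guarantees that any $a \in T_x$ and any $b \in T_y$ are genuinely distinct vertices of $\Gamma(\mathbb{Z}_n)$, so that an edge between them is well defined.

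For the direction ``edge in $\Gamma^T \Rightarrow$ every pair adjacent'' I would start from $n \mid xy$. Given arbitrary $a \in T_x$ and $b \in T_y$, the divisibilities $x \mid a$ and $y \mid b$ give $xy \mid ab$, and combining with $n \mid xy$ yields $n \mid ab$; hence $ab \equiv 0 \pmod{n}$ and $a,b$ are adjacent in $\Gamma(\mathbb{Z}_n)$. This direction uses only the ``$\gcd$ divides its arguments'' fact and is a one-line divisibility chain.

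For the converse I would use the representative elements. Since $x \in T_x$ and $y \in T_y$ are themselves vertices of $\Gamma(\mathbb{Z}_n)$ (being proper divisors of $n$ strictly between $1$ and $n$, hence nonzero zero-divisors) and $x \neq y$, the hypothesis that every element of $T_x$ is adjacent to every element of $T_y$ applies in particular to the pair $(x,y)$. Thus $xy \equiv 0 \pmod{n}$, i.e. $n \mid xy$, which is exactly the condition for $T_x$ and $T_y$ to share an edge in $\Gamma^T(\mathbb{Z}_n)$.

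I do not expect a serious obstacle here; the substance is bookkeeping. The one point requiring care is verifying that the class indices behave as honest vertices: one must confirm that $T_x, T_y$ are nonempty and that $x, y$ lie in $ZD(\mathbb{Z}_n)$ (ruling out the degenerate index $n$, whose class contains no nonzero element), and that distinctness of the classes forces $x \neq y$, so that both the type-graph edge and the $\Gamma(\mathbb{Z}_n)$-edge are defined on pairs of distinct vertices. Once these checks are in place, the two divisibility implications close the argument.
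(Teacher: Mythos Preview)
Your proposal is correct and follows essentially the same approach as the paper. The paper itself does not supply a proof for this particular statement (it is attributed to Smith~\cite{jK55}), but its proof of the product analogue (Theorem~\ref{1.2a}) specializes to exactly your argument: for the forward direction, use that $a$ is a multiple of $x$ and $b$ is a multiple of $y$ to conclude $ab$ is a multiple of $xy\equiv 0$; for the converse, observe that $x\in T_x$ and $y\in T_y$ and apply the hypothesis to this particular pair. Your extra bookkeeping about distinctness and nonemptiness of the classes is a welcome bit of care that the paper's version omits.
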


\begin{theorem}\label{1.3} Arbitrary distinct vertices $T_x $ and $T_y $ don't share an edge in $\Gamma^T(\mathbb{Z}_{n})$ iff each $a\in T_x$ doesn't share an edge with each $b\in T_y$ in $\Gamma(\mathbb{Z}_{n})$.
\end{theorem}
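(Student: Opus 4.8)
The statement is the logical counterpart of Theorem~\ref{1.2}, but it does not follow from it by pure logic: in an arbitrary graph, ``two vertex sets are not joined by a complete bipartite subgraph'' is strictly weaker than ``there are no edges between the two sets.'' What makes the statement true here is a dichotomy special to type classes, namely that between two distinct classes $T_x$ and $T_y$ the adjacency in $\Gamma(\mathbb{Z}_n)$ is all-or-nothing. The plan is to isolate the arithmetic fact underlying this dichotomy and then read off both Theorem~\ref{1.2} and the present statement from it.

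The key claim is that for any $a \in T_x$ and $b \in T_y$ one has $ab \equiv 0 \pmod{n}$ if and only if $xy \equiv 0 \pmod{n}$; that is, whether a cross pair is an edge depends only on the classes $x,y$ and not on the chosen representatives. The forward implication is immediate: since $x = \gcd(a,n)$ divides $a$ and $y = \gcd(b,n)$ divides $b$, we have $xy \mid ab$, so $n \mid xy$ forces $n \mid ab$. (This direction alone reproves Theorem~\ref{1.2}.) The converse is the crux of the argument.

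To prove the converse I would argue prime by prime. Fix a prime $p \mid n$ and let $e = v_p(n)$ be its exponent in $n$, where $v_p$ denotes $p$-adic valuation. The defining condition $\gcd(a,n) = x$ gives $v_p(x) = \min(v_p(a), e)$, and likewise $v_p(y) = \min(v_p(b), e)$. Assuming $n \mid ab$, i.e.\ $v_p(a) + v_p(b) \ge e$ for every $p \mid n$, I would show $v_p(x) + v_p(y) \ge e$ by a short case split: if $v_p(a) \ge e$ then $v_p(x) = e$ and the inequality is clear, and symmetrically if $v_p(b) \ge e$; otherwise both minima equal $v_p(a)$ and $v_p(b)$, and the desired inequality is exactly the hypothesis. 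Since this holds for each prime dividing $n$, we conclude $n \mid xy$.

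With the claim in hand the theorem is bookkeeping. Because $T_x$ and $T_y$ are distinct, $x \neq y$, so every $a \in T_x$ differs from every $b \in T_y$ and the adjacency rule of $\Gamma(\mathbb{Z}_n)$ (which requires distinct vertices) applies with no self-loop exceptions. By definition $T_x$ and $T_y$ fail to share an edge in $\Gamma^T(\mathbb{Z}_n)$ exactly when $xy \not\equiv 0 \pmod{n}$; by the claim this is equivalent to $ab \not\equiv 0 \pmod{n}$ for every $a \in T_x$ and every $b \in T_y$, which is precisely the assertion that no vertex of $T_x$ shares an edge with any vertex of $T_y$. The only genuine obstacle is the converse of the claim, and even there the work reduces to the elementary valuation inequality above; everything else is unwinding definitions.
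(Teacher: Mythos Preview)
Your proof is correct. The paper does not supply its own proof of this particular theorem (it is listed among the observations attributed to Smith), but it does prove the product analogue, Theorem~\ref{1.3a}, and specializing that argument to a single factor gives the natural point of comparison.

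The mathematical content is the same: the heart of both arguments is that if $\gcd(a,n)=x$ and $v_p(x)<v_p(n)$, then in fact $v_p(a)=v_p(x)$, so the $p$-part of $ab$ is exactly the $p$-part of $xy$ whenever the latter falls short of $v_p(n)$. You phrase this cleanly via $p$-adic valuations; the paper phrases it informally as ``$xy$ lacks some factor $f$ of $n$, hence so does $ab$.'' The only organizational difference is that you first isolate the two-sided claim ``$ab\equiv 0\iff xy\equiv 0$'' and then read off both directions, whereas the paper handles the two implications separately and dispatches the converse more cheaply by observing that $x\in T_x$ and $y\in T_y$ are themselves representatives, so the absence of any edge between the classes immediately gives $xy\not\equiv 0$. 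Your route is a bit more systematic; the paper's converse is a one-liner. Neither buys anything the other lacks.
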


\begin{theorem}\label{1.4} In $\Gamma(\mathbb{Z}_{n})$ consider arbitrary $a$ and $b$ in the same type class. An arbitrary vertex $c$ in $\Gamma(\mathbb{Z}_{n})$ shares an edge with $b$ iff it shares an edge with $a$ also.
\end{theorem}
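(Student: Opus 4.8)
The plan is to convert the statement about shared edges into an arithmetic divisibility condition and to observe that this condition depends on a vertex only through its type class. By definition two distinct vertices $u$ and $v$ of $\Gamma(\mathbb{Z}_n)$ are adjacent exactly when $uv \equiv 0 \pmod{n}$, and $a$ and $b$ lie in the same type class precisely when $\gcd(a,n) = \gcd(b,n)$; write $d$ for this common value. Thus the whole question reduces to comparing, for a given $c$, the conditions $ac \equiv 0 \pmod n$ and $bc \equiv 0 \pmod n$.

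The heart of the argument is the divisibility lemma: for any $c$ one has $ac \equiv 0 \pmod n$ if and only if $\frac{n}{d} \mid c$. To prove it I would write $a = d\,a'$ and $n = d\,n'$ with $\gcd(a',n') = 1$. Then $n \mid ac$ is equivalent to $n' \mid a'c$, and because $\gcd(a',n') = 1$ this holds if and only if $n' \mid c$, that is, $\frac{n}{d} \mid c$. The essential point is that the resulting criterion $\frac{n}{d} \mid c$ sees $a$ only through $d = \gcd(a,n)$.

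From here the conclusion is immediate: since $\gcd(a,n) = \gcd(b,n) = d$, the lemma gives the chain $ac \equiv 0 \pmod n \iff \frac{n}{d} \mid c \iff bc \equiv 0 \pmod n$, so $ac = 0$ exactly when $bc = 0$. In particular, applying this to every vertex $c$ shows that $a$ and $b$ have the same neighbors, which is what the present theorem asserts and which is precisely the uniformity needed to justify the type-graph edge rule in Theorems \ref{1.2} and \ref{1.3}.

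The one delicate point --- and the step I expect to need the most care --- is the distinctness clause hidden in the word ``edge'': adjacency of $c$ with $a$ also requires $c \ne a$. The arithmetic equivalence $ac = 0 \iff bc = 0$ is unconditional, so it transfers directly to an equivalence of edges whenever $c$ is different from both $a$ and $b$; this already covers every $c$ outside the common type class of $a$ and $b$, which is the case that matters for the type-graph construction. I would therefore emphasize the result for such $c$, and note that when $c$ equals $a$ or $b$ the statement should be read through the product condition rather than the literal edge relation, since a vertex is never joined to itself.
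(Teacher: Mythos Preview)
Your argument is correct. The paper does not actually prove Theorem~\ref{1.4}; it is listed as one of Smith's observations and attributed to~\cite{jK55}. For the product-ring analogue (Theorem~\ref{1.4a}) the paper's one-line proof appeals to Theorems~\ref{1.2} and~\ref{1.3}: if $c$ is adjacent to $b$ then, by the contrapositive of Theorem~\ref{1.3}, the type classes of $c$ and of $b$ must be adjacent in the type graph, whence Theorem~\ref{1.2} forces $c$ to be adjacent to every member of $b$'s class, in particular to $a$.

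Your route is genuinely different and more direct: you bypass the type graph and reduce adjacency to the single divisibility criterion $\tfrac{n}{d}\mid c$, which visibly depends on $a$ only through $d=\gcd(a,n)$. This is self-contained---it does not rely on Theorems~\ref{1.2} and~\ref{1.3}---and is in fact the arithmetic computation that underlies those theorems. Your flagging of the $c\in\{a,b\}$ case is also well taken: read literally (edges require distinct endpoints), the statement fails when $a$ and $b$ lie in a self-annihilating type class and $c$ equals one of them (e.g.\ $n=9$, $a=3$, $b=6$, $c=3$: here $c$ is adjacent to $b$ but not to $a$). The paper does not address this, and your resolution---reading the claim through the product condition $ac=0\Leftrightarrow bc=0$, which is how the result is used downstream---is the correct one.
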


Following are the four analogues to the above results for  $\Gamma^T(\mathbb{Z}_{n_1}\times\mathbb{Z}_{n_2}\times\cdots\times\mathbb{Z}_{n_k})$.\\

\begin{theorem}\label{1.1 a} Each vertex of $\Gamma(\mathbb{Z}_{n_1}\times\mathbb{Z}_{n_2}\times\cdots\times\mathbb{Z}_{n_k})$ is in exactly one type class.
\end{theorem}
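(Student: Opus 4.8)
The plan is to exhibit a well-defined assignment sending each vertex to a single type tuple, and then to argue existence and uniqueness of its type class directly from the definition. For a vertex $(a_1,\ldots,a_k)$ of $\Gamma(\mathbb{Z}_{n_1}\times\cdots\times\mathbb{Z}_{n_k})$, I would set $x_i = 0$ when $a_i = 0$ and $x_i = \gcd(a_i, n_i)$ when $a_i \neq 0$. Because the greatest common factor of two integers is unique, this prescription determines the tuple $(x_1,\ldots,x_k)$ unambiguously, and by the definition of the type classes it places $(a_1,\ldots,a_k)$ into $T(x_1,\ldots,x_k)$. Everything then reduces to checking that this tuple is an admissible index and that it is the only class containing the vertex.

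First I would verify uniqueness. Suppose $(a_1,\ldots,a_k)$ lies in both $T(x_1,\ldots,x_k)$ and $T(x_1',\ldots,x_k')$. Comparing coordinatewise, if $a_i = 0$ then the defining condition forces $x_i = 0 = x_i'$, while if $a_i \neq 0$ both $x_i$ and $x_i'$ must equal $\gcd(a_i, n_i)$; hence $x_i = x_i'$ for every $i$, and the two classes coincide. Thus no vertex can belong to two distinct type classes.

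Next I would confirm existence, namely that the tuple produced above is a legitimate index, neither $(0,\ldots,0)$ nor $(1,\ldots,1)$. Since a vertex is a nonzero element, some coordinate $a_j$ is nonzero, whence $x_j = \gcd(a_j, n_j) \geq 1$, so the tuple is not $(0,\ldots,0)$. For the other exclusion, I would invoke the characterization of units in a finite product ring: $(a_1,\ldots,a_k)$ is a unit exactly when every $a_i$ is a unit of $\mathbb{Z}_{n_i}$, that is, when $\gcd(a_i, n_i) = 1$ for all $i$. The tuple equals $(1,\ldots,1)$ only when each $x_i = 1$, which (since $a_i = 0$ forces $x_i = 0$) requires every $a_i$ to be a nonzero unit, making $(a_1,\ldots,a_k)$ itself a unit and contradicting that it is a vertex. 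Hence the index is admissible, and the vertex indeed lies in $T(x_1,\ldots,x_k)$.

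The argument is essentially a bookkeeping exercise, so I do not expect a serious obstacle. The one point requiring care is the excluded-tuple check, which is where the fact that the vertex set consists precisely of the nonzero non-units, equivalently the nonzero zero-divisors of the product ring, is genuinely used, rather than merely the well-definedness of $\gcd$.
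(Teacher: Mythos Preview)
Your proof is correct and follows essentially the same approach as the paper: both argue existence via the well-definedness of $\gcd(a_i,n_i)$ coordinatewise and uniqueness by comparing coordinates. Your version is in fact more careful, since you explicitly verify that the resulting tuple is neither $(0,\ldots,0)$ nor $(1,\ldots,1)$ using that a vertex is a nonzero non-unit, a check the paper's proof omits.
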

\begin{proof}
Assume otherwise. Then there is a vertex $v$ that is not in any type class,  or $v$ is in multiple type classes.\\
\begin{enumerate}
\item[Case 1:] $v$ is not in any type class.\\

Then $v$ must have an element $a_i$ that is not 0 and whose gcf with $n_i$ is not a number $x_i$ which is clearly not true.\\
\item[Case 2:] $v$ is in multiple type classes.\\

Let $v = (a_1,  a_2,  \cdots,  a_k) \in T(x_1,  x_2, \cdots,  x_k) \cap T(y_z,  y_2,  \cdots,  y_k)$. Then for all $i \in \{1,  2,  \cdots,  k\}$ if $a_i = 0$,  then $x_i = y_i = 0$ and if $a_i \neq 0$,  then $gcd(a_i,  n_i) = x_i = y_i$ giving $(x_1,  x_2, \cdots,  x_k) = (y_1,  y_2, \cdots,  y_k)$ which is a contradiction.
\end{enumerate}
\end{proof}

\begin{theorem}\label{1.2a} Arbitrary distinct vertices $T_x = T(x_1, x_2, \cdots, x_k)$ and $T_y = T(y_1, y_2, \cdots, y_k)$ share an edge in $\Gamma^T(\mathbb{Z}_{n_1}\times\mathbb{Z}_{n_2}\times\cdots\times\mathbb{Z}_{n_k})$ iff each $a\in T_x$ shares an edge with each $b\in T_y$ in $\Gamma(\mathbb{Z}_{n_1}\times\mathbb{Z}_{n_2}\times\cdots\times\mathbb{Z}_{n_k})$.
\end{theorem}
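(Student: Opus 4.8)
The plan is to reduce the edge condition in both graphs to a componentwise numerical condition and then verify the equivalence one coordinate at a time. In the product ring, two vertices $a=(a_1,\dots,a_k)$ and $b=(b_1,\dots,b_k)$ are adjacent precisely when $a_ib_i\equiv 0\pmod{n_i}$ for every $i$, while by definition $T_x$ and $T_y$ are adjacent in the type graph precisely when $x_iy_i\equiv 0\pmod{n_i}$ for every $i$. So the whole theorem will follow once I establish the coordinatewise lemma: if $x_i=\gcd(a_i,n_i)$ and $y_i=\gcd(b_i,n_i)$ (with the convention $x_i=0$ exactly when $a_i=0$), then $a_ib_i\equiv 0\pmod{n_i}$ if and only if $x_iy_i\equiv 0\pmod{n_i}$.

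First I would record the easy cases and the easy direction. If $x_i=0$ then $a_i=0$, so both $a_ib_i\equiv0$ and $x_iy_i\equiv0$ hold automatically, and symmetrically for $y_i=0$; thus I may assume $a_i,b_i\neq 0$. For the direction $x_iy_i\equiv0\Rightarrow a_ib_i\equiv0$, I would use that $x_i\mid a_i$ and $y_i\mid b_i$ give $x_iy_i\mid a_ib_i$, so $n_i\mid x_iy_i$ forces $n_i\mid a_ib_i$.

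The substantive step, and the one I expect to be the main obstacle, is the converse $a_ib_i\equiv 0 \Rightarrow x_iy_i\equiv 0$, since here $a_i,b_i$ carry extra prime factors beyond their gcd's with $n_i$, and one must check those do not disturb the divisibility. I would argue prime-by-prime: fix a prime $p\mid n_i$ and set $N=v_p(n_i)$, $\alpha=v_p(a_i)$, $\beta=v_p(b_i)$, so that $v_p(x_i)=\min(\alpha,N)$ and $v_p(y_i)=\min(\beta,N)$. The hypothesis $n_i\mid a_ib_i$ gives $N\le\alpha+\beta$, and I would deduce $N\le\min(\alpha,N)+\min(\beta,N)$ by splitting into the cases $\alpha\ge N$, $\beta\ge N$, and $\alpha,\beta<N$; in each case the inequality is immediate. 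Since this holds for every $p\mid n_i$, we obtain $n_i\mid x_iy_i$, completing the lemma.

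Finally I would assemble the theorem from the lemma. For the forward direction, assuming $x_iy_i\equiv0$ for all $i$, the lemma gives $a_ib_i\equiv0$ for all $i$ and for every $a\in T_x$, $b\in T_y$, so $ab=0$; moreover $a\neq b$ because $T_x\neq T_y$ are disjoint by Theorem~\ref{1.1 a}, hence $a$ and $b$ are genuinely adjacent. For the converse, I would choose the canonical representatives $a_i=x_i$ (and $a_i=0$ when $x_i=0$), and likewise for $b$, which lie in $T_x$ and $T_y$ respectively; adjacency of this particular pair gives $a_ib_i\equiv0$, and the lemma then yields $x_iy_i\equiv0$ for all $i$, i.e. $T_x$ and $T_y$ are adjacent in $\Gamma^T$.
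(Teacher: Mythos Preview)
Your proof is correct, but it does more work than the theorem requires. The forward direction (edge in $\Gamma^T$ implies every pair adjacent) matches the paper exactly: $x_i\mid a_i$ and $y_i\mid b_i$ give $n_i\mid x_iy_i\mid a_ib_i$. For the converse, however, the paper simply observes that $x=(x_1,\dots,x_k)$ itself lies in $T_x$ and $y=(y_1,\dots,y_k)$ lies in $T_y$; the hypothesis then gives $xy=0$ directly, i.e.\ $x_iy_i\equiv 0$ for all $i$, with no further argument. You do the same thing in your assembly step, but then invoke your lemma unnecessarily: once you have chosen $a_i=x_i$ and $b_i=y_i$, the conclusion $x_iy_i\equiv 0$ is literally the hypothesis $a_ib_i\equiv 0$, not a consequence of it.

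Your ``substantive step'' (the $p$-adic valuation argument that $n_i\mid a_ib_i\Rightarrow n_i\mid x_iy_i$) is therefore never used in proving Theorem~\ref{1.2a}. What it actually establishes is the nontrivial half of Theorem~\ref{1.3a}: if $T_x$ and $T_y$ are \emph{not} adjacent in $\Gamma^T$, then \emph{no} $a\in T_x$ is adjacent to any $b\in T_y$. So your coordinatewise lemma is a clean unified statement that yields both theorems at once, which is a genuine advantage; the paper instead handles \ref{1.3a} separately with an ad hoc ``missing factor $f$'' argument that is essentially your valuation computation in disguise.
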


\begin{proof}
Let $T_x$ shares and edge with $T_y$. By the definition,  $x_iy_i = 0$ for every $i$. Consider arbitrary $(a_1, \cdots, a_i)\in T_x$ and $(b_1, \cdots, b_i)\in T_y$.Since each $a_i$ is a multiple of $x_i$ and each $b_i$ is a multiple of $y_i$,  $a_ib_i$ is a multiple of $x_iy_i$ and therefore equal to 0. Then $(a_1, \cdots, a_i)$ and $(b_1, \cdots, b_i)$ share an edge.\\
Conversely,  let every $a\in T_x$ and $b\in T_y$ share an edge. Since $x = (x_1, x_2, \cdots, x_k)$ is an element of $T_x$,  and $y = (y_1, y_2, \cdots, y_k)$ is an element of $T_y$,  $x$ and $y$ share an edge. Then $T_x$ must share an edge with $T_y$.
\end{proof}

\begin{theorem}\label{1.3a} Arbitrary distinct vertices $T_x = T(x_1, x_2, \cdots, x_k)$ and $T_y = T(y_1, y_2, \cdots, y_k)$ don't share an edge in $\Gamma^T(\mathbb{Z}_{n_1}\times\mathbb{Z}_{n_2}\times\cdots\times\mathbb{Z}_{n_k})$ iff each $a\in T_x$ doesn't share an edge with each $b\in T_y$ in $\Gamma(\mathbb{Z}_{n_1}\times\mathbb{Z}_{n_2}\times\cdots\times\mathbb{Z}_{n_k})$.
\end{theorem}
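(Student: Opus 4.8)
The plan is to reduce both directions to a single coordinate-wise number-theoretic equivalence and then quantify over representatives. The governing fact I would establish first is this: for a positive integer $m$ and integers $u,v$, one has $uv\equiv 0\pmod m$ if and only if $\gcd(u,m)\,\gcd(v,m)\equiv 0\pmod m$. Granting this, adjacency of $a=(a_1,\dots,a_k)$ and $b=(b_1,\dots,b_k)$ in $\Gamma(\mathbb{Z}_{n_1}\times\cdots\times\mathbb{Z}_{n_k})$ — which requires $a_ib_i=0$ in $\mathbb{Z}_{n_i}$ for every $i$ — depends only on the type data $\gcd(a_i,n_i)=x_i$ and $\gcd(b_i,n_i)=y_i$, matching exactly the defining condition ``$x_iy_i=0$ for all $i$'' for an edge $T_xT_y$ in the type graph. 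This is precisely what makes the relation between two type classes an all-or-nothing affair, and it is the extra content the present statement needs beyond a mere negation of Theorem~\ref{1.2a}.

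I would stress why Theorem~\ref{1.3a} is \emph{not} a formal consequence of Theorem~\ref{1.2a}: negating the latter only yields that $T_x$ and $T_y$ fail to share an edge iff \emph{some} pair $a\in T_x$, $b\in T_y$ fails to be adjacent, whereas here I must upgrade ``some pair'' to ``every pair.'' The easy ($\Leftarrow$) direction mirrors the proof of Theorem~\ref{1.2a}: since each $x_i$ divides $n_i$, the representative $x=(x_1,\dots,x_k)$ lies in $T_x$ and $y=(y_1,\dots,y_k)$ lies in $T_y$; if no element of $T_x$ is adjacent to any element of $T_y$, then in particular $x$ and $y$ are non-adjacent, so $x_iy_i\neq 0$ for some $i$, which is exactly the statement that $T_x$ and $T_y$ share no edge in $\Gamma^T(\mathbb{Z}_{n_1}\times\cdots\times\mathbb{Z}_{n_k})$.

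The ($\Rightarrow$) direction is where the lemma does the real work, and I expect it to be the main obstacle. Suppose $T_x$ and $T_y$ are non-adjacent, so $x_jy_j\neq 0$ in $\mathbb{Z}_{n_j}$ for some index $j$; this forces $x_j\neq 0$ and $y_j\neq 0$, so every $a\in T_x$ and $b\in T_y$ genuinely satisfy $\gcd(a_j,n_j)=x_j$ and $\gcd(b_j,n_j)=y_j$ in that coordinate. Applying the equivalence above with $m=n_j$, from $\gcd(a_j,n_j)\gcd(b_j,n_j)=x_jy_j\not\equiv 0$ I conclude $a_jb_j\not\equiv 0\pmod{n_j}$, hence $a$ and $b$ are non-adjacent in the product graph. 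As $a$ and $b$ were arbitrary, no pair is adjacent, which completes the direction.

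Finally, I would prove the lemma itself by comparing $p$-adic valuations across the primes dividing $m$. For a prime $p$ set $e=v_p(m)$; then the exponent of $p$ in $\gcd(u,m)\gcd(v,m)$ equals $\min(v_p(u),e)+\min(v_p(v),e)$, and a short case analysis on whether each of $v_p(u),v_p(v)$ reaches $e$ shows this is $\ge e$ exactly when $v_p(u)+v_p(v)\ge e$, i.e.\ exactly when $p^e\mid uv$; ranging over all $p$ dividing $m$ yields the claimed equivalence. The one piece of bookkeeping to handle carefully is the zero coordinates, where $x_i=0$ forces $a_i=0$ by definition: such a coordinate makes both the type product $x_iy_i$ and the element product $a_ib_i$ vanish there, so it never obstructs non-adjacency and the argument above is unaffected.
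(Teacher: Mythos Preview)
Your proof is correct and follows the same route as the paper: both handle the $(\Leftarrow)$ direction by specializing to the canonical representatives $x\in T_x$ and $y\in T_y$, and both handle $(\Rightarrow)$ by fixing a coordinate $j$ with $x_jy_j\neq 0$ in $\mathbb{Z}_{n_j}$ and deducing $a_jb_j\neq 0$ there from the gcd conditions. The paper phrases that deduction informally (``$x_iy_i$ lacks some factor $f$ of $n_i$, hence so does $a_ib_i$''), whereas you package it as the clean lemma $uv\equiv 0\pmod m \Leftrightarrow \gcd(u,m)\gcd(v,m)\equiv 0\pmod m$ proved via $p$-adic valuations---a more rigorous rendering of the same idea---and your explicit remarks on why this is not merely the negation of Theorem~\ref{1.2a} and on the zero-coordinate bookkeeping are welcome clarifications the paper leaves implicit.
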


\begin{proof}
Let $T_x$ does not share an edge with $T_y$. By the definition,  $x_iy_i \neq 0$ for some $i$,  which means $x_iy_i$ lacks some factor $f$ of $n_i$. Consider arbitrary $(a_1, \cdots, a_i\cdots, a_k)\in T_x$ and $(b_1, \cdots, b_i,  \cdots b_k)\in T_y$. Now, $a_i$ is a multiple of $x_i$ and  $b_i$ is a multiple of $y_i$,  and thus,  $a_ib_i$ is a multiple of $x_iy_i$. Since $gcf(a_i, n_i) = x_i$ and $gcf(b_i, n_i) = y_i$,  $a_ib_i$ also lacks the factor $f$ from $n_i$ and is therefore non-zero. So $(a_1, \cdots, a_k)$ and $(b_1, \cdots, b_k)$ do not share an edge.\\
Conversely,  let each $a\in T_x$ and $b\in T_y$ do not share an edge. Since $x = (x_1, x_2, \cdots, x_k)$ is an element of $T_x$,  and $y = (y_1, y_2, \cdots, y_k)$ is an element of $T_y$,  $x$ and $y$ don't share an edge. Then $T_x$ must not share an edge with $T_y$.
\end{proof}

\begin{theorem}\label{1.4a} In $\Gamma(\mathbb{Z}_{n_1}\times\mathbb{Z}_{n_2}\times\cdots\times\mathbb{Z}_{n_k})$ consider arbitrary $a = (a_1, a_2, \cdots, a_k)$ and $b = (b_1, b_2, \cdots, b_k)$ in the same type class $T(t_1, t_2, \cdots, t_k)$. An arbitrary vertex $c = (c_1, c_2, \cdots, c_k)$ shares an edge with $b$ iff it shares an edge with $a$ also.
\end{theorem}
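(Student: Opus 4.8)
The plan is to reduce the claim to a component-by-component statement about $\mathbb{Z}_{n_i}$ and then exploit the fact that, in each coordinate, whether a product vanishes depends only on the $\gcd$ with $n_i$ --- which is exactly the data recorded by the type class. Recall that in the product ring $\mathbb{Z}_{n_1}\times\cdots\times\mathbb{Z}_{n_k}$ multiplication is coordinatewise, so $c$ shares an edge with $a$ precisely when $c_i a_i = 0$ in $\mathbb{Z}_{n_i}$ for every $i$ (and $c\neq a$), and likewise for $b$. Thus it suffices to prove, for each fixed $i$, the equivalence $c_i a_i = 0 \iff c_i b_i = 0$ in $\mathbb{Z}_{n_i}$, under the hypothesis $\gcd(a_i,n_i) = \gcd(b_i,n_i) = t_i$ (with the convention that $a_i = b_i = 0$ when $t_i = 0$).

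The crux is the following divisibility observation. Fix $i$ and suppose $t_i \neq 0$; write $d = \gcd(a_i, n_i) = t_i$. Then $a_i/d$ and $n_i/d$ are coprime, so $n_i \mid c_i a_i$ if and only if $(n_i/d) \mid c_i (a_i/d)$, and by coprimality this holds if and only if $(n_i/d) \mid c_i$. In other words, $c_i a_i = 0$ in $\mathbb{Z}_{n_i}$ if and only if $(n_i/t_i) \mid c_i$. The decisive point is that this final condition refers to $a_i$ only through $t_i = \gcd(a_i,n_i)$; the particular representative $a_i$ has been eliminated. I expect verifying this equivalence to be the main (and essentially the only) real content, as the rest is bookkeeping.

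With the lemma in hand the theorem follows quickly. Since $\gcd(b_i, n_i) = t_i$ as well, the very same computation gives $c_i b_i = 0 \iff (n_i/t_i)\mid c_i$, so $c_i a_i = 0 \iff c_i b_i = 0$; when $t_i = 0$ we have $a_i = b_i = 0$ and both products vanish trivially. Taking the conjunction over all $i$ yields $ca = 0 \iff cb = 0$ in the product ring, which (for $c$ distinct from both $a$ and $b$) is exactly the assertion that $c$ is adjacent to $a$ iff it is adjacent to $b$.

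As an alternative packaging that shortens the argument, I could instead split on the type class of $c$: by Theorem \ref{1.1 a} the vertex $c$ lies in a unique type class $T_s$, and if $T_s$ differs from the common class $T_t$ of $a$ and $b$, then Theorems \ref{1.2a} and \ref{1.3a} immediately force $c$ to be adjacent to all of $T_t$ or to none of it, settling both cases at once; only when $c$ lies in $T_t$ itself would I fall back on the coordinatewise $\gcd$ computation above.
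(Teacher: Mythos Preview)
Your argument is correct. The coordinatewise lemma---that $c_ia_i\equiv 0\pmod{n_i}$ holds iff $(n_i/t_i)\mid c_i$, a condition visible only through $t_i=\gcd(a_i,n_i)$---is exactly right, and the $t_i=0$ case is handled cleanly.

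The paper's own proof is a one-liner: it simply cites the two preceding results (the product analogues of Theorems~\ref{1.2} and~\ref{1.3}), which say that adjacency between an element of $T_x$ and an element of $T_y$ is determined entirely by whether $T_x$ and $T_y$ are adjacent in the type graph. That is precisely the ``alternative packaging'' you sketch in your final paragraph. So your secondary route coincides with the paper's, while your primary route is a more explicit, self-contained computation that avoids appealing to Theorems~\ref{1.2a} and~\ref{1.3a} at all. The direct approach has the advantage that it transparently handles the case $c\in T_t$ (where $T_s=T_t$ and the ``distinct type classes'' hypothesis of the earlier theorems does not literally apply), a point you correctly flag; the paper's terse citation glosses over this. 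Either route is fine here, and your write-up is more careful than the paper's.
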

\begin{proof}
Follows from Theorem \ref{1.2} and \ref{1.3}.
\end{proof}

Next,  we want have the following theorem:\

\begin{theorem}\label{1.8} $\Gamma(\mathbb{Z}_{n_1}\times\mathbb{Z}_{n_2}\times\cdots\times\mathbb{Z}_{n_k})$ is perfect iff its type graph $\Gamma^T(\mathbb{Z}_{n_1}\times\mathbb{Z}_{n_2}\times\cdots\times\mathbb{Z}_{n_k})$ is perfect.
\end{theorem}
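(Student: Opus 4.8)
The plan is to recognize $\Gamma(\mathbb{Z}_{n_1}\times\cdots\times\mathbb{Z}_{n_k})$ as a \emph{blow-up} (vertex substitution) of its type graph $\Gamma^T(\mathbb{Z}_{n_1}\times\cdots\times\mathbb{Z}_{n_k})$ and then to invoke the classical Lov\'asz replacement lemma, which states that the graph obtained by substituting a graph $H$ for a vertex of a graph $G$ is perfect if and only if both $G$ and $H$ are perfect. Concretely, I would show that $\Gamma$ is built from $\Gamma^T$ by replacing each type-class vertex $T_x$ with either a complete graph or an edgeless graph on $|T_x|$ vertices, and joining two such blocks either completely or not at all according to the adjacency of the corresponding vertices in $\Gamma^T$.

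The structural description rests on two kinds of facts. Between two distinct type classes the join is all-or-nothing: Theorems \ref{1.2a} and \ref{1.3a} give that $T_x$ and $T_y$ are adjacent in $\Gamma^T$ exactly when every $a\in T_x$ is adjacent to every $b\in T_y$, and non-adjacent exactly when no $a\in T_x$ meets any $b\in T_y$. It then remains to understand each class internally. For $a,b\in T_x$ and a coordinate $i$ with $x_i\neq 0$, writing $a_i=x_i\alpha$ and $b_i=x_i\beta$ with $\gcd(\alpha,n_i/x_i)=\gcd(\beta,n_i/x_i)=1$, one checks that $a_ib_i\equiv 0 \pmod{n_i}$ holds if and only if $(n_i/x_i)\mid x_i$, a condition on $x_i$ alone and independent of the chosen representatives. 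Hence every pair in $T_x$ is adjacent when $n_i\mid x_i^2$ for all $i$, and no pair is adjacent otherwise; that is, $T_x$ induces a clique or an independent set. Since both complete and edgeless graphs are perfect, $\Gamma$ is genuinely a substitution of perfect blocks into $\Gamma^T$.

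With the blow-up structure in hand I would finish by iterating the replacement lemma over the finitely many (nonempty) type classes. For the forward implication, if $\Gamma^T$ is perfect then substituting the perfect cliques and independent sets one class at a time keeps each intermediate graph perfect, so $\Gamma$ is perfect. For the converse, the ``only if'' half is in fact elementary: the canonical representatives $x=(x_1,\dots,x_k)\in T_x$, which are genuine nonzero zero-divisors because $x$ is neither $(0,\dots,0)$ nor $(1,\dots,1)$ (so $x$ is a nonzero non-unit, hence a zero-divisor, and $\gcd(x_i,n_i)=x_i$ places it in $T_x$), span an induced subgraph of $\Gamma$ isomorphic to $\Gamma^T$ by Theorems \ref{1.2a} and \ref{1.3a}, and induced subgraphs of perfect graphs are perfect.

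I expect the main obstacle to be the internal analysis of a type class rather than the graph theory: one must verify carefully that the adjacency condition $a_ib_i\equiv 0 \pmod{n_i}$ collapses to the representative-free divisibility $(n_i/x_i)\mid x_i$ in every coordinate, so that each class is unambiguously a clique or an independent set and the blow-up picture is exact. Once that combinatorial reduction is secure, identifying $\Gamma$ as a blow-up of $\Gamma^T$ is routine and the replacement lemma delivers both directions; the only bookkeeping is to restrict attention to the nonempty type classes, which is precisely the content of the partition statement for the product type graph.
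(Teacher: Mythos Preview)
Your proposal is correct and takes a genuinely different route from the paper. The paper follows Smith's template: it states Theorems~\ref{1.5}, \ref{1.6}, \ref{1.7} to show that odd holes and antiholes of length greater than~$4$ transfer back and forth between $\Gamma$ and $\Gamma^T$, and then finishes by invoking the Strong Perfect Graph Theorem. You instead identify $\Gamma$ explicitly as a blow-up of $\Gamma^T$ by cliques and independent sets and apply the Lov\'asz replacement lemma, with the converse handled by the induced embedding of $\Gamma^T$ via canonical representatives. Both arguments rest on the same structural facts---Theorems~\ref{1.2a} and~\ref{1.3a} for the all-or-nothing joins between classes, plus the coordinate-wise calculation that each $T_x$ is internally a clique or an independent set (the product analogue of what the paper later records as Lemma~\ref{2.0})---but your route is more self-contained: the replacement lemma is elementary, whereas the paper's argument leans on the very deep SPGT. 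The paper's approach has the advantage of matching the cited source verbatim and of making the hole/antihole bookkeeping explicit, which is reused elsewhere; yours has the advantage of not importing a theorem vastly harder than the statement being proved.
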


To show this,  we will use the following three theorems,  whose proofs are analogous to the corresponding proofs in ~\cite{jK55}.\\

\begin{theorem}\label{1.5} Given arbitrary hole or antihole $H$ of length greater than $4$ in $\Gamma(\mathbb{Z}_{n_1}\times\mathbb{Z}_{n_2}\times\cdots\times\mathbb{Z}_{n_k})$,  every vertex in $H$ belongs to a different type class.
\end{theorem}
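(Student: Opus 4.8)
The plan is to argue by contradiction, exploiting the fact that two vertices in a common type class behave as graph-theoretic twins. Suppose, for a hole or antihole $H$ of length $m > 4$, that two distinct vertices $u,v \in V(H)$ lie in the same type class. By Theorem~\ref{1.4a}, for every vertex $c \notin \{u,v\}$ we have $c$ adjacent to $u$ if and only if $c$ is adjacent to $v$; in other words, $u$ and $v$ have identical neighborhoods in $\Gamma(\mathbb{Z}_{n_1}\times\mathbb{Z}_{n_2}\times\cdots\times\mathbb{Z}_{n_k})$ away from the pair $\{u,v\}$. I will show that no hole or antihole of length exceeding $4$ can contain such a twin pair, which gives the claim.

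First I would treat the hole case. Write $H$ as an induced cycle $v_1 v_2 \cdots v_m v_1$ with $m \geq 5$, and restrict the twin condition to the remaining hole-vertices $c \in V(H)\setminus\{u,v\}$. If $u$ and $v$ are consecutive on the cycle, say $u = v_i$ and $v = v_{i+1}$, then their only hole-neighbors outside the pair are $v_{i-1}$ and $v_{i+2}$ respectively, so equality forces $v_{i-1} = v_{i+2}$ and hence $m \mid 3$. If instead $u = v_i$ and $v = v_j$ are non-consecutive, their hole-neighborhoods $\{v_{i-1},v_{i+1}\}$ and $\{v_{j-1},v_{j+1}\}$ must coincide, which forces $m \mid 4$. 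Either conclusion contradicts $m > 4$.

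The antihole case I would reduce to the hole case by complementation. The twin condition of Theorem~\ref{1.4a} is self-complementary: if $u$ and $v$ have the same $\Gamma$-neighbors outside $\{u,v\}$, then they have the same $\Gamma$-non-neighbors there, hence the same neighbors in the complement $\overline{\Gamma}$ outside $\{u,v\}$. Since $H$ is an antihole, the subgraph that $\overline{\Gamma}$ induces on $V(H)$ is a hole of length $m \geq 5$, so the argument of the previous paragraph applies verbatim and again yields a contradiction.

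The step requiring the most care is the index bookkeeping near the twin pair: one must confirm that the neighbors $v_{i\pm 1}$ are genuinely distinct from both $u$ and $v$ before deleting the pair, and this is precisely where the hypothesis $m > 4$ enters. The bound is sharp, since the $4$-cycle $v_1 v_2 v_3 v_4$ has the false-twin pair $v_1, v_3$ sharing the neighborhood $\{v_2, v_4\}$; thus the statement genuinely fails at length $4$, and the whole argument hinges on ruling this configuration out for $m \geq 5$.
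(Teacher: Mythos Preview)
Your argument is correct and is exactly the approach the paper intends: it does not spell out a proof of Theorem~\ref{1.5} but defers to the analogous argument in~\cite{jK55}, which likewise exploits that two vertices in the same type class are twins (your Theorem~\ref{1.4a}) and then checks that an induced cycle or its complement of length exceeding $4$ cannot contain a twin pair. Your case split (consecutive versus non-consecutive on the cycle, then complementation for antiholes) is the standard way to carry this out, and your remark on the sharpness at $m=4$ is a nice touch.
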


\begin{theorem}\label{1.6} Let there be a hole or antihole $H$ length $l>4$ in $\Gamma(\mathbb{Z}_{n_1}\times\mathbb{Z}_{n_2}\times\cdots\times\mathbb{Z}_{n_k})$. Then the type graph $\Gamma^T(\mathbb{Z}_{n_1}\times\mathbb{Z}_{n_2}\times\cdots\times\mathbb{Z}_{n_k})$ must also contain a hole or antihole length $l$.
\end{theorem}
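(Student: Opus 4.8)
The plan is to show that the map sending each vertex of $H$ to the unique type class containing it is a graph isomorphism onto the subgraph it induces in $\Gamma^T$; the hole/antihole structure then transfers automatically. First I would invoke Theorem~\ref{1.5}: since $H$ has length $l>4$, its $l$ vertices $v_1,\dots,v_l$ lie in $l$ pairwise distinct type classes $T^{(1)},\dots,T^{(l)}$. This is what guarantees that the assignment $v_i\mapsto T^{(i)}$ is injective, so its image is a set of exactly $l$ distinct vertices of the type graph.

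The key step is to verify that this assignment preserves adjacency and non-adjacency exactly. For distinct indices $i,j$ the vertices $v_i,v_j$ sit in distinct type classes, so Theorems~\ref{1.2a} and~\ref{1.3a} both apply, and together they say that $v_i$ shares an edge with $v_j$ in $\Gamma$ if and only if $T^{(i)}$ shares an edge with $T^{(j)}$ in $\Gamma^T$. Concretely: if $v_i$ and $v_j$ are adjacent, then the ``no edge between any representatives'' condition of Theorem~\ref{1.3a} fails, forcing $T^{(i)}$ and $T^{(j)}$ to be adjacent; conversely, if $T^{(i)}$ and $T^{(j)}$ are adjacent then Theorem~\ref{1.2a} makes every pair of representatives adjacent, in particular $v_i$ and $v_j$. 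Hence the subgraph of $\Gamma^T$ induced on $\{T^{(1)},\dots,T^{(l)}\}$ is isomorphic to the subgraph of $\Gamma$ induced on $\{v_1,\dots,v_l\}$, which is $H$ itself.

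Finally, because a hole is exactly an induced cycle of length greater than $4$ and an antihole is the complement of one, and because our map preserves the full adjacency/non-adjacency relation rather than merely sending edges to edges, a hole $H$ maps to a hole of the same length $l$ in $\Gamma^T$ and an antihole maps to an antihole of the same length. This yields the required hole or antihole of length $l$ in the type graph.

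The one point demanding care—and the real crux—is that a ``hole'' requires an \emph{induced} cycle, so it would not suffice to exhibit merely a cycle of length $l$ among the type classes. The all-or-nothing adjacency between distinct type classes furnished by Theorems~\ref{1.2a} and~\ref{1.3a} (itself resting on the uniform-adjacency property of Theorem~\ref{1.4a}) is precisely what upgrades the edge-correspondence into a genuine induced-subgraph isomorphism; once that is in place, the conclusion is immediate.
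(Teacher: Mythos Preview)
Your argument is correct and is exactly the approach the paper intends: the paper does not spell out a proof here but simply declares it analogous to Smith's original, and the analogous argument is precisely the one you give---invoke Theorem~\ref{1.5} for injectivity, then use Theorems~\ref{1.2a} and~\ref{1.3a} to transfer the full induced adjacency structure to the type graph. Your explicit emphasis that the hole must be \emph{induced}, and that the all-or-nothing adjacency between distinct type classes is what guarantees this, is a welcome clarification beyond what the paper states.
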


\begin{theorem}\label{1.7} 
	Let there be a hole or antihole $H$ length $l>4$ in the type class $\Gamma^T(\mathbb{Z}_{n_1}\times\mathbb{Z}_{n_2}\times\cdots\times\mathbb{Z}_{n_k})$. Then the graph $\Gamma(\mathbb{Z}_{n_1}\times\mathbb{Z}_{n_2}\times\cdots\times\mathbb{Z}_{n_k})$ must also contain a hole or antihole length $l$.
\end{theorem}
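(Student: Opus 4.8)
The plan is to exploit the ``blow-up'' relationship between $\Gamma^T$ and $\Gamma$ that is encoded in Theorems~\ref{1.2a} and~\ref{1.3a}. Those results say that two distinct type classes are adjacent in the type graph exactly when \emph{every} pair of representatives (one drawn from each class) is adjacent in $\Gamma$, and that they are non-adjacent exactly when \emph{no} such pair is adjacent. Because these two statements are biconditionals that together cover both edges and non-edges, the induced adjacency pattern on any selection of one representative per class will faithfully reproduce the adjacency pattern among the corresponding type classes. This is the whole engine of the proof, and it makes the argument uniform in the two cases: a hole and an antihole are each determined purely by their induced adjacency structure, so preserving that structure sends a hole to a hole and an antihole to an antihole of the same length.

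Concretely, I would begin with a hole or antihole $H$ of length $l>4$ in $\Gamma^T$, say on the distinct type-class vertices $T_{x^1}, T_{x^2}, \ldots, T_{x^l}$. For each $j$ I would choose a representative $v^j \in T_{x^j}$; the canonical choice $v^j = x^j = (x^j_1, \ldots, x^j_k)$ works, since this tuple always lies in its own type class and is a genuine vertex of $\Gamma$. Because the type classes partition the vertex set (Theorem~\ref{1.1 a}), distinct hole/antihole vertices yield distinct representatives, so the $v^j$ are pairwise distinct. I would then verify that the assignment $T_{x^j} \mapsto v^j$ is an isomorphism from $H$ onto the subgraph of $\Gamma$ induced by $\{v^1, \ldots, v^l\}$: if $T_{x^i}$ and $T_{x^j}$ are adjacent in $\Gamma^T$ then Theorem~\ref{1.2a} forces $v^i$ adjacent to $v^j$, while if they are non-adjacent then Theorem~\ref{1.3a} forces $v^i$ non-adjacent to $v^j$. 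Hence both adjacency and non-adjacency are preserved, the induced subgraph on the representatives is isomorphic to $H$, and $\Gamma$ therefore contains a hole or antihole of length $l$.

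I do not expect a genuine obstacle here; the real content is recognizing that Theorems~\ref{1.2a} and~\ref{1.3a} jointly upgrade the representative-selection map into a full \emph{induced}-subgraph isomorphism. The one point that requires attention is that non-edges must be preserved as well as edges, which is precisely why Theorem~\ref{1.3a}, and not merely Theorem~\ref{1.2a}, is needed: without it we would only know that the representatives span a subgraph \emph{containing} the required edges, not that their induced subgraph is exactly $H$. The remaining bookkeeping, namely that each $v^j$ is a legitimate nonzero zero-divisor vertex of $\Gamma$ and that the $v^j$ are distinct, follows at once from the disjointness of the type classes. This theorem is the exact converse of Theorem~\ref{1.6}, and together they will give the hole/antihole equivalence needed to deduce Theorem~\ref{1.8} from the Strong Perfect Graph Theorem.
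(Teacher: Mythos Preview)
Your proposal is correct and matches the paper's approach: the paper does not give a separate argument but simply declares the proof analogous to Smith's, which is exactly the representative-lifting argument you describe using Theorems~\ref{1.2a} and~\ref{1.3a}. The key observation you highlight---that Theorem~\ref{1.3a} is needed to preserve non-edges so that the lifted subgraph is \emph{induced}---is precisely the point of the analogous step in~\cite{jK55}.
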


Using these theorems,  now we can establish the following proof of Theorem \ref{1.8}.\\
\begin{proof}
The proof is analogous to the proof in ~\cite{jK55}.
\end{proof}

Now that we know perfectness in the type graph implies perfectness in the zero-divisor graph,  it is possible to find all such perfect $\Gamma(\mathbb{Z}_{n_1}\times\mathbb{Z}_{n_2}\times\cdots\times\mathbb{Z}_{n_k})$. As it turns out,  for both $\Gamma^T(\mathbb{Z}_n)$ and $\Gamma^T(\mathbb{Z}_{n_1}\times\mathbb{Z}_{n_2}\times\cdots\times\mathbb{Z}_{n_k})$,  we can exchange the primes of each $n_i$,  and as long as the form of the primes (the amount of primes and the power of each prime) stays the same,  the type graph will be isomorphic. To illustrate this,  consider $\Gamma^T(\mathbb{Z}_{p^2q}\times\mathbb{Z}_p)$ where $p, q$ are prime. This type graph is isomorphic to $\Gamma^T(\mathbb{Z}_{r^2s}\times\mathbb{Z}_t)$ where $r, s, t$ are prime,  even if the value of the primes change. We will use this to find all perfect $\Gamma(\mathbb{Z}_{n_1}\times\mathbb{Z}_{n_2}\times\cdots\times\mathbb{Z}_{n_k})$.

\begin{theorem}\label{1.9} Consider some $\Gamma^T(\mathbb{Z}_n)$ and $\Gamma^T(\mathbb{Z}_m)$ such that $n=p_1^{\alpha_1}p_2^{\alpha_2}\cdots p_k^{\alpha_k}$ and $m=q_1^{\alpha_1}q_2^{\alpha_2}\cdots q_k^{\alpha_k}$. Then $\Gamma^T(\mathbb{Z}_n) \cong \Gamma^T(\mathbb{Z}_m)$.
\end{theorem}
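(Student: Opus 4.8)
The plan is to encode each divisor by its vector of prime exponents and to observe that both the vertex set and the adjacency relation of the type graph depend only on these vectors together with the common exponent tuple $(\alpha_1,\dots,\alpha_k)$, never on the actual values of the primes. Relabeling $p_i\mapsto q_i$ will then furnish the isomorphism.

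First I would set up the correspondence between divisors and exponent vectors. Every divisor $a$ of $n=p_1^{\alpha_1}\cdots p_k^{\alpha_k}$ is uniquely of the form $a=p_1^{\beta_1}\cdots p_k^{\beta_k}$ with $0\le\beta_i\le\alpha_i$, and since $\gcd(a,n)=a$ the type class $T_a$ is nonempty (it contains $a$ itself). Hence the vertices of $\Gamma^T(\mathbb{Z}_n)$ are exactly the divisors $a$ with $a\neq 1$ and $a\neq n$, that is, the exponent vectors $(\beta_1,\dots,\beta_k)$ other than $(0,\dots,0)$ and $(\alpha_1,\dots,\alpha_k)$.

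Next I would translate adjacency into the exponent language. For divisors $a=\prod p_i^{\beta_i}$ and $b=\prod p_i^{\gamma_i}$, the definition gives $T_a$ adjacent to $T_b$ iff $n\mid ab$; since $ab=\prod p_i^{\beta_i+\gamma_i}$, this holds precisely when $\beta_i+\gamma_i\ge\alpha_i$ for every $i$. The key point is that this criterion refers only to the two exponent vectors and the shared tuple $(\alpha_1,\dots,\alpha_k)$. I would then define $\phi\from\Gamma^T(\mathbb{Z}_n)\to\Gamma^T(\mathbb{Z}_m)$ by sending the vertex with exponent vector $(\beta_1,\dots,\beta_k)$ to the $\Gamma^T(\mathbb{Z}_m)$-vertex with the same vector, i.e. $\phi\bigl(T_{\prod p_i^{\beta_i}}\bigr)=T_{\prod q_i^{\beta_i}}$. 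Because $m=\prod q_i^{\alpha_i}$ carries the identical exponent tuple, the excluded vectors $(0,\dots,0)$ and $(\alpha_1,\dots,\alpha_k)$ correspond, so $\phi$ is a well-defined bijection of vertex sets; and by the exponent criterion, $T_a$ is adjacent to $T_b$ iff $\beta_i+\gamma_i\ge\alpha_i$ for all $i$ iff $\phi(T_a)$ is adjacent to $\phi(T_b)$, so $\phi$ preserves and reflects edges.

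There is no genuine obstacle in this argument; its entire content is that adjacency in the type graph is a function of the exponent data alone. The only points deserving care are confirming that each type class is nonempty (so that vertices biject with the proper nontrivial divisors) and correctly converting the congruence $n\mid ab$ into the componentwise inequalities $\beta_i+\gamma_i\ge\alpha_i$. I would also remark that the same reasoning, composed with a permutation of coordinates, shows invariance whenever the \emph{multiset} of exponents agrees, which is the form used in the informal discussion preceding the statement.
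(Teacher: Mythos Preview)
Your argument is correct and is essentially the same as the paper's: both define the isomorphism by replacing each prime $p_i$ with $q_i$ in the exponent representation of a divisor. Your write-up is in fact more complete than the paper's, which merely defines the map and asserts that ``the result follows,'' whereas you explicitly verify bijectivity and translate adjacency into the exponent criterion $\beta_i+\gamma_i\ge\alpha_i$.
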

\begin{proof}
Consider arbitrary vertex $u$ in $\Gamma^T(\mathbb{Z}_n)$. $u$ is a factor of $n$,  so we can write $u=p_1^{x_1}p_2^{x_2}\cdots p_k^{x_k}$. Note that $0\leq x_i \leq \alpha_i$,  $\forall i$. Define a function $f:\Gamma^T(\mathbb{Z}_n)\to \Gamma^T(\mathbb{Z}_m)$ as $f(u)=f(p_1^{x_1}p_2^{x_2}\cdots p_k^{x_k}) = q_1^{x_1}q_2^{x_2}\cdots q_k^{x_k}$. Since $n$ and $m$ both have the same amount of prime factors,  and each corresponding prime has the same power $\alpha_i$,  the result follows.\\
 \end{proof}

\begin{theorem}\label{1.10} Consider $\Gamma^T(\mathbb{Z}_{n_1}\times\cdots\times\mathbb{Z}_{n_k})$ and $\Gamma^T(\mathbb{Z}_{m_1}\times\cdots\times\mathbb{Z}_{m_k})$ where the prime factorization of $n_i$ has the same form as $m_i$ for each $i$. That is,  $n_i$ and $m_i$ have the same amount of prime factors and the same power for each prime. Then $\Gamma^T(\mathbb{Z}_{n_1}\times\cdots\times\mathbb{Z}_{n_k}) \cong \Gamma^T(\mathbb{Z}_{m_1}\times\cdots\times\mathbb{Z}_{m_k})$.
\end{theorem}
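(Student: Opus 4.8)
The plan is to construct an explicit graph isomorphism coordinate by coordinate, exactly in the spirit of Theorem~\ref{1.9}, and then verify that the product structure of the adjacency relation is respected. Write $n_i = p_{i,1}^{\alpha_{i,1}}\cdots p_{i,r_i}^{\alpha_{i,r_i}}$ and $m_i = q_{i,1}^{\alpha_{i,1}}\cdots q_{i,r_i}^{\alpha_{i,r_i}}$, where by hypothesis the exponent sequences agree for each $i$. For a single coordinate, every divisor of $n_i$ has the shape $x_i = p_{i,1}^{e_1}\cdots p_{i,r_i}^{e_{r_i}}$ with $0\le e_j\le\alpha_{i,j}$; define $f_i(x_i) = q_{i,1}^{e_1}\cdots q_{i,r_i}^{e_{r_i}}$, and set $f_i(0)=0$. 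Each $f_i$ is a bijection from the set of divisors of $n_i$ together with the symbol $0$ onto the corresponding set for $m_i$, since it acts as the identity on exponent vectors and fixes the symbol $0$; in particular $f_i(1)=1$, the case of the all-zero exponent vector.

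Next I would define $f\from \Gamma^T(\mathbb{Z}_{n_1}\times\cdots\times\mathbb{Z}_{n_k})\to\Gamma^T(\mathbb{Z}_{m_1}\times\cdots\times\mathbb{Z}_{m_k})$ on type classes by $f(T(x_1,\ldots,x_k)) = T(f_1(x_1),\ldots,f_k(x_k))$. To see this is well defined I must check that it respects the two excluded tuples: since each $f_i$ fixes $0$ and $1$, the tuple $(x_1,\ldots,x_k)$ equals $(0,\ldots,0)$ (respectively $(1,\ldots,1)$) if and only if its image does, so legitimate type classes map to legitimate type classes. Being a product of bijections on the coordinate symbol sets that also preserves the exclusions, $f$ is a bijection on vertex sets.

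The heart of the argument is edge preservation. By the definition of the product type graph, $T(x_1,\ldots,x_k)$ and $T(y_1,\ldots,y_k)$ are adjacent iff $x_iy_i = 0$ in $\mathbb{Z}_{n_i}$ for every $i$; hence it suffices to prove the per-coordinate equivalence $x_iy_i\equiv 0\pmod{n_i}\iff f_i(x_i)f_i(y_i)\equiv 0\pmod{m_i}$ and then intersect over all $i$. Writing $x_i=\prod_j p_{i,j}^{e_j}$ and $y_i=\prod_j p_{i,j}^{g_j}$, the divisibility $n_i\mid x_iy_i$ is equivalent to $e_j+g_j\ge\alpha_{i,j}$ for every $j$; the very same exponent inequalities characterize $m_i\mid f_i(x_i)f_i(y_i)$ precisely because $m_i$ carries the identical exponents $\alpha_{i,j}$. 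The degenerate cases $x_i=0$ or $y_i=0$ are immediate, since both sides of the equivalence then hold automatically. Conjoining the coordinatewise equivalences shows that $f$ and $f^{-1}$ carry edges to edges, so $f$ is a graph isomorphism.

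The step I expect to be the main obstacle is organizing the edge-preservation cleanly: one must keep in mind that the product adjacency is the \emph{conjunction} over coordinates of the single-factor zero conditions, so that preserving each coordinate's condition separately genuinely forces the product adjacency to be preserved in both directions. Once the reduction to the exponent characterization, namely $n_i\mid x_iy_i$ iff $\min_j(e_j+g_j-\alpha_{i,j})\ge 0$, is in place, the remainder is just the observation that this inequality does not see the actual primes, only their exponents, which is exactly the phenomenon that Theorem~\ref{1.9} exploits in the single-factor setting.
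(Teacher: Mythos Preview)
Your proof is correct and follows essentially the same approach as the paper: both define the isomorphism coordinatewise by replacing the primes of $n_i$ with those of $m_i$ while keeping exponents fixed, exactly as in Theorem~\ref{1.9}. Your write-up is in fact more complete than the paper's, which defines the map and then simply remarks that ``the result follows as the previous one''; you explicitly verify edge preservation via the exponent criterion $e_j+g_j\ge\alpha_{i,j}$, which is the underlying reason the argument works.
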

\begin{proof}
Take arbitrary $n_i$.\\
Denote the prime factorization of $n_i = p_{i,  1}^{\alpha_{i,  1}}\cdots p_{i,  j_i}^{\alpha{i,  j_i}}$ where $j_i$ is the amount of prime of $n_i$. Likewise,  $m_i = q_{i,  1}^{\alpha_{i,  1}}\cdots q_{i,  j_i}^{\alpha{i,  j_i}}$. Note that the only difference between these factorizations are the value of the primes being used. The powers and amount of primes are the same. Consider arbitrary $(u_1, \cdots, u_k) \in \Gamma^T(\mathbb{Z}_{n_1}\times\cdots\times\mathbb{Z}_{n_k})$. Each $u_i$ is a factor of $n_i$ or 0. We can write $u_i = p_{i,  1}^{x_{i,  1}}\cdots p_{i,  j_i}^{x_{i,  j_i}}$ where $0 \leq x_{i,  l} \leq \alpha_{i,  l}$. Note that if $u_i$ is 1,  each $x_{i,  l}$ is 0 and if $u_i$ is 0,  $x_{i,  l} = \alpha_{i,  l}$ for every $l$.\\
Define a function $f: \Gamma^T(\mathbb{Z}_{n_1}\times\cdots\times\mathbb{Z}_{n_k}) \to \Gamma^T(\mathbb{Z}_{m_1}\times\cdots\times\mathbb{Z}_{m_k})$ as $f(u_1, \cdots, u_k) = f(p_{1,  1}^{x_{1,  1}}\cdots p_{1,  j_1}^{x_{1,  j_1}}, \cdots, p_{k,  1}^{x_{k,  1}}\cdots p_{k,  j_k}^{x_{k,  j_k}})$\\
$= (q_{1,  1}^{x_{1,  1}}\cdots q_{1,  j_1}^{x_{1,  j_1}}, \cdots, q_{k,  1}^{x_{k,  1}}\cdots q_{k,  j_k}^{x_{k,  j_k}}) = (v_1, \cdots, v_k)$. Note that all we did was only replaced the primes. Hence the result follows as the previous one.
\end{proof}

\begin{theorem}\label{1.11} $\Gamma^T(\mathbb{Z}_{n_1}\times\cdots\times\mathbb{Z}_{n_k})$ is isomorphic to $\Gamma^T(\mathbb{Z}_{n_1\cdots n_k})$ if all $n_i$'s are mutually co-prime.
\end{theorem}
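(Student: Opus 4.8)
The plan is to build the isomorphism directly from the Chinese Remainder Theorem. Since the $n_i$ are pairwise coprime, setting $N = n_1\cdots n_k$, the CRT map $\phi\from \mathbb{Z}_N \to \mathbb{Z}_{n_1}\times\cdots\times\mathbb{Z}_{n_k}$ sending $x \mapsto (x \bmod n_1, \ldots, x \bmod n_k)$ is a ring isomorphism. Because a ring isomorphism carries products to products and $0$ to $(0,\ldots,0)$, it restricts to a graph isomorphism $\Gamma(\mathbb{Z}_N) \cong \Gamma(\mathbb{Z}_{n_1}\times\cdots\times\mathbb{Z}_{n_k})$: indeed $xy = 0$ in $\mathbb{Z}_N$ iff $\phi(x)\phi(y) = 0$ in the product, and $x$ is a nonzero zero-divisor iff $\phi(x)$ is. The strategy is then to show that $\phi$ respects the type-class partitions on both sides, so that it descends to the quotient type graphs.

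The key computation is that the type of $x$ on each side is recoverable from the other. Using pairwise coprimality, $\gcd(x, N) = \prod_i \gcd(x, n_i)$, and $\gcd(x, n_i) = \gcd(x \bmod n_i,\, n_i)$. In the product, the type of $\phi(x) = (x_1,\ldots,x_k)$ is the tuple $(t_1,\ldots,t_k)$ with $t_i = \gcd(x_i, n_i)$ when $x_i \neq 0$ and $t_i = 0$ when $x_i = 0$. I would record the translation $t_i = \gcd(x, n_i)$ whenever $\gcd(x, n_i) < n_i$, and $t_i = 0$ exactly when $\gcd(x, n_i) = n_i$ (equivalently $x_i = 0$). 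From this one reads off that $\gcd(x, N) = \gcd(y, N)$ holds iff $\phi(x)$ and $\phi(y)$ lie in the same product type class; hence $\phi$ carries each type class $T_a$ of $\mathbb{Z}_N$ bijectively onto a single type class of the product, and distinct type classes to distinct type classes.

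Next I would verify that this assignment is a bijection on vertex sets, including the two excluded extreme classes. Every divisor $a$ of $N$ factors uniquely as $a = \prod a_i$ with $a_i = \gcd(a, n_i)\mid n_i$, and $a \mapsto (t_1,\ldots,t_k)$ (with $t_i = a_i$ if $a_i < n_i$ and $t_i = 0$ if $a_i = n_i$) runs bijectively over all admissible tuples. The divisor $a = 1$ (the unit type) maps to $(1,\ldots,1)$ and $a = N$ (the type of the zero element) maps to $(0,\ldots,0)$, which are precisely the two tuples excluded as vertices on the product side and which are likewise not vertices of $\Gamma^T(\mathbb{Z}_N)$. So the vertex sets match up.

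Finally, the edge correspondence is inherited for free from the fact that $\phi$ is already a graph isomorphism of the underlying zero-divisor graphs. By Theorems \ref{1.2a} and \ref{1.3a} (and their single-modulus analogues \ref{1.2} and \ref{1.3}), two distinct type classes are adjacent in a type graph iff every representative of one is adjacent to every representative of the other in the zero-divisor graph. Since $\phi$ maps type classes bijectively onto type classes and preserves and reflects adjacency of individual vertices, full adjacency between two classes in $\Gamma(\mathbb{Z}_N)$ is equivalent to full adjacency between their images; thus the induced vertex bijection is an isomorphism $\Gamma^T(\mathbb{Z}_N) \cong \Gamma^T(\mathbb{Z}_{n_1}\times\cdots\times\mathbb{Z}_{n_k})$. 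I expect the only delicate point to be the bookkeeping in the preceding paragraphs, namely matching the convention that a coordinate-gcd of $n_i$ is recorded as the label $0$ rather than $n_i$, rather than any genuinely hard step.
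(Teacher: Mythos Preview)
Your proposal is correct and follows exactly the approach the paper indicates: the paper's proof is the single line ``The proof follows by Chinese Remainder theorem,'' and what you have written is a careful unpacking of precisely that, including the bookkeeping that matches the coordinate label $0$ with $\gcd = n_i$. There is nothing to add.
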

\begin{proof}
The proof follows by Chineese Remainder theorem.
\end{proof}

The next theorem will show how we can characterize perfectness of $\Gamma(\mathbb{Z}_{n_1}\times\cdots\times\mathbb{Z}_{n_k})$. Because now by the above three theorem without loss of generality we can simply choose primes that will make each $n_i$ co-prime. Then we know the type graph will be isomorphic to $\Gamma(\mathbb{Z}_n)$ where $n$ is the product of all such co-prime $n_i$. So $n$ will have a prime factorization with the total amount of primes in all $n_i$ and they will have corresponding powers. So,  we have the following theorem.

\begin{theorem}\label{1.12} $\Gamma(\mathbb{Z}_{n_{1}}\times\mathbb{Z}_{n_{2}}\cdots \mathbb{Z}_{n_{k}})$ is perfect iff it is possible to find mutually co prime positive integers $m_1, m_2 \cdots m_k$,  so that each $m_{i}$ has same amount of prime factors with same exponent in it's prime factorization as that in $n_i$ and $\Gamma(\mathbb{Z}_{m_{1}m_{2}\cdots m_{k}})$ is perfect. 
\end{theorem}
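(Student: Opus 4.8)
The plan is to assemble a chain of equivalences linking perfectness of $\Gamma(\mathbb{Z}_{n_1}\times\cdots\times\mathbb{Z}_{n_k})$ to perfectness of an ordinary zero-divisor graph $\Gamma(\mathbb{Z}_{m_1\cdots m_k})$, using the type-graph machinery built above. The guiding observation is that Theorem \ref{1.10} shows the type graph depends only on the \emph{form} of each factor (the number of primes and their exponents), not on the actual primes used, while Theorem \ref{1.11} collapses a product of mutually co-prime factors into a single modulus. Since the original $n_i$ need not be mutually co-prime, the entire role of the auxiliary integers $m_i$ is to realize the same form with fresh, pairwise-distinct primes, so that Theorem \ref{1.11} becomes applicable where it could not be applied to the $n_i$ directly.

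I would first record that for any co-prime choice $m_1,\ldots,m_k$ whose factorizations have the same form as $n_1,\ldots,n_k$, there is a four-term chain of equivalences. By Theorem \ref{1.8}, $\Gamma(\mathbb{Z}_{n_1}\times\cdots\times\mathbb{Z}_{n_k})$ is perfect iff $\Gamma^T(\mathbb{Z}_{n_1}\times\cdots\times\mathbb{Z}_{n_k})$ is perfect; by Theorem \ref{1.10} the latter type graph is isomorphic to $\Gamma^T(\mathbb{Z}_{m_1}\times\cdots\times\mathbb{Z}_{m_k})$, so (perfectness being a graph-isomorphism invariant) one is perfect iff the other is; by Theorem \ref{1.11} and co-primality of the $m_i$, $\Gamma^T(\mathbb{Z}_{m_1}\times\cdots\times\mathbb{Z}_{m_k}) \cong \Gamma^T(\mathbb{Z}_{m_1\cdots m_k})$, preserving perfectness again; and finally Smith's Theorem 4.1 gives that $\Gamma^T(\mathbb{Z}_{m_1\cdots m_k})$ is perfect iff $\Gamma(\mathbb{Z}_{m_1\cdots m_k})$ is perfect. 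Concatenating these, $\Gamma(\mathbb{Z}_{n_1}\times\cdots\times\mathbb{Z}_{n_k})$ is perfect iff $\Gamma(\mathbb{Z}_{m_1\cdots m_k})$ is perfect, for every such co-prime same-form $m_i$.

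With the chain in hand, both directions of the stated equivalence follow. For the forward direction, assume $\Gamma(\mathbb{Z}_{n_1}\times\cdots\times\mathbb{Z}_{n_k})$ is perfect, and for each $i$ write $n_i = p_{i,1}^{\alpha_{i,1}}\cdots p_{i,j_i}^{\alpha_{i,j_i}}$, then set $m_i = q_{i,1}^{\alpha_{i,1}}\cdots q_{i,j_i}^{\alpha_{i,j_i}}$ with the $q$'s chosen pairwise distinct across all $i$ and all positions. By construction the $m_i$ are mutually co-prime and match the form of the $n_i$, so the chain applies and yields that $\Gamma(\mathbb{Z}_{m_1\cdots m_k})$ is perfect; these $m_i$ witness the existential claim. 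For the converse, suppose some co-prime $m_1,\ldots,m_k$ of the prescribed form satisfy that $\Gamma(\mathbb{Z}_{m_1\cdots m_k})$ is perfect. Since the chain is valid for any such co-prime same-form tuple, not merely the fresh-prime one, it immediately returns perfectness of $\Gamma(\mathbb{Z}_{n_1}\times\cdots\times\mathbb{Z}_{n_k})$.

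Most of the work has already been discharged by the cited theorems, so I do not anticipate a serious obstacle; the single point requiring care is the co-primality hypothesis of Theorem \ref{1.11}. Because the $n_i$ may share primes, Theorem \ref{1.11} cannot be invoked for them, and the existence clause in the statement is precisely what permits substituting form-preserving co-prime $m_i$ for which the collapse to one modulus is legitimate. I would therefore state explicitly that the fresh-prime construction always produces such $m_i$, so that the existential quantifier in the forward direction is genuinely realized rather than merely posited, and note that Theorem \ref{1.10} makes perfectness of the product type graph depend only on the common form, which is what renders the choice of representatives immaterial.
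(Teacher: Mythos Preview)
Your proposal is correct and follows essentially the same approach as the paper: both argue via the chain $\Gamma \leftrightarrow \Gamma^T$ (Theorem~\ref{1.8}), the form-invariance of the type graph (Theorem~\ref{1.10}), the Chinese Remainder collapse under co-primality (Theorem~\ref{1.11}), and Smith's Theorem~4.1 at the single-modulus end. The paper merely sketches this in the paragraph preceding the statement, whereas you have written out the chain of equivalences and the two directions explicitly and carefully flagged why co-primality of the $m_i$ (rather than the original $n_i$) is the hinge that makes Theorem~\ref{1.11} applicable.
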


\begin{example}
For example,  $\Gamma(\mathbb{Z}_{p^2q}\times\mathbb{Z}_{p})$ is perfect because $\Gamma(\mathbb{Z}_{a^2bc})$ is perfect as shown by ~\cite{jK55}. Also note,  no product with a dimension greater than four can be perfect. $\Gamma(\mathbb{Z}_{p_1}\times\cdots\times\mathbb{Z}_{p_5})$ is not perfect since no $\Gamma(\mathbb{Z}_{p_1\cdots p_5})$ is perfect as shown by ~\cite{jK55}.
\end{example}

\section{Some properties of $\Gamma(\mathbb{Z}_n)$}

In this section we characterize $\Gamma(\mathbb{Z}_n)$ by various qualities such as completeness,  cordiality and clique number. A helpful construction used is the strong type graph. We define the strong type graph as the type graph with self loops. We normally do not consider self-loops,  in zero-divisor graphs and type graphs,  but in the strong type graph,  a vertex has a loop at it if it annihilates itself. We denote the strong type graph of $\Gamma(\mathbb{Z}_n)$ as $\Gamma^S(\mathbb{Z}_n)$.\\
Another construction used commonly in this section is $n^*$. Consider some $\Gamma(\mathbb{Z}_n)$. Let $n = p_1^{\alpha_1}p_2^{\alpha_2}\cdots p_m^{\alpha_m}$,  then $n^*=p_1^{\beta_1}p_2^{\beta_2}\cdots p_m^{\beta_m}$ where $\beta_i$ is half of $\alpha_i$ rounded up. This construction is very useful,  as some properties of vertices can be associated with whether or not the vertex is a multiple of $n^*$.

\begin{lemma}\label{2.0} Two arbitrary vertices $u$ and $v$ in $\Gamma(\mathbb{Z}_n)$ that are both in the same type class $T_i$ share an edge iff $T_i$ has a self-loop in the strong type graph.
\end{lemma}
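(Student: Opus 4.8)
The plan is to translate both sides of the biconditional into divisibility statements involving the label $a$ of the type class $T_i = T_a$, and then to compare them one prime at a time. First I would record the two translations. Since $\Gamma(\mathbb{Z}_n)$ carries no loops, two distinct vertices $u, v \in T_a$ share an edge exactly when $uv \equiv 0 \pmod{n}$, that is, when $n \mid uv$. By the definition of the strong type graph, $T_a$ carries a self-loop exactly when $a$ annihilates itself, i.e. $a^2 \equiv 0 \pmod{n}$, that is, $n \mid a^2$. Thus the lemma reduces to the claim that, for any $u, v$ with $\gcd(u,n) = \gcd(v,n) = a$, one has $n \mid uv$ if and only if $n \mid a^2$.

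Next I would fix the prime factorization $n = p_1^{\alpha_1}\cdots p_m^{\alpha_m}$ and set $\gamma_i = v_{p_i}(a)$ for the exponent of $p_i$ in $a$, so that $0 \le \gamma_i \le \alpha_i$. The key structural step is to read off from $\gcd(u,n) = a$ exactly what it forces on $v_{p_i}(u)$: the identity $\min\bigl(v_{p_i}(u), \alpha_i\bigr) = \gamma_i$ yields $v_{p_i}(u) = \gamma_i$ whenever $\gamma_i < \alpha_i$, and $v_{p_i}(u) \ge \alpha_i$ whenever $\gamma_i = \alpha_i$, with the identical statements holding for $v$. I would then express both divisibilities valuation-by-valuation: $n \mid uv$ means $v_{p_i}(u) + v_{p_i}(v) \ge \alpha_i$ for every $i$, while $n \mid a^2$ means $2\gamma_i \ge \alpha_i$ for every $i$.

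Finally I would compare the two conditions at each prime $p_i$. When $\gamma_i = \alpha_i$ both inequalities hold automatically, since $v_{p_i}(u) + v_{p_i}(v) \ge 2\alpha_i \ge \alpha_i$ and $2\gamma_i = 2\alpha_i \ge \alpha_i$, so such a prime constrains neither side. When $\gamma_i < \alpha_i$ the exact equalities $v_{p_i}(u) = v_{p_i}(v) = \gamma_i$ collapse the first condition to $2\gamma_i \ge \alpha_i$, which is precisely the second. Hence the two divisibility statements hold at each prime under identical circumstances, giving $n \mid uv \iff n \mid a^2$ and establishing the lemma.

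I expect the one genuinely delicate point to be the case $\gamma_i = \alpha_i$, where the gcd hypothesis does not pin $v_{p_i}(u)$ down exactly (it may exceed $\alpha_i$), so one cannot naively replace $u$ and $v$ by $a$ factor-by-factor. The resolution is the observation above that in exactly this case the divisibility is forced on both sides regardless, so the ambiguity never affects the comparison. I would also flag the degenerate case in which $T_a$ is a singleton: then no two distinct $u, v$ exist and the statement is vacuous, so I take $u \ne v$ throughout.
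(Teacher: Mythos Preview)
Your argument is correct. The valuation bookkeeping is clean, and you correctly isolate the only subtle case ($\gamma_i = \alpha_i$), where the gcd hypothesis does not pin down $v_{p_i}(u)$ exactly; your observation that both divisibility conditions are automatic at such primes closes the gap.

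The paper takes a different, more elementary route. Rather than work prime-by-prime with valuations, it writes $u = a\cdot i$ and $v = b\cdot i$ with $\gcd(a, n/i) = \gcd(b, n/i) = 1$ (this is the content of $\gcd(u,n) = i$), and then argues the converse direction by contradiction: if $i^2 \not\equiv 0$ then the reduced fraction $i^2/n = g/d$ has $d > 1$, and from $ab\, i^2 = cn$ one gets $abg/d = c$, which is impossible since $a$, $b$, $g$ are all coprime to $d$. Your approach has the advantage of being completely uniform (the same inequality $2\gamma_i \ge \alpha_i$ governs both directions, with no separate contradiction step), and it makes the role of each prime transparent. The paper's approach avoids the valuation formalism entirely and stays at the level of integer divisibility, which is lighter notationally but requires the reader to verify that $d \mid n/i$ so that the coprimality of $a, b$ with $n/i$ really does force coprimality with $d$; this step is left implicit in the paper.
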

\begin{proof}
Let $T_i$ have a self-loop. Then $i^2 = 0$. Since every $u, v\in T_i$ are multiples of $i$,  $u$ and $v$ will share an edge.\\
Conversely,  let $T_i$ does not have a self-loop. Take arbitrary $u$ and $v$ in $T_i$. According to the definition of type class,  $u$ and $v$ are some multiple of $i$ where $gcf(u, n)=i$ and likewise for $v$. We can write $u=ai$ and $v=bi$ where $gcf(a, n/i)=1$ and $gcf(b, n/i)=1$. Assume $u$ and $v$ share and edge. Then $uv=cn$,  $abi^2=cn$ where $c$ is a natural number. So $\frac{abi^2}{n}=c$. Since $T_i$ does not have a self-loop,  $i^2\neq 0$ which means $n$ contains a factor not contained by $i^2$. Let this factor be called $d$. Let $\frac{g}{d}$ represent the simplified form of the fraction $\frac{i^2}{n}$ where $d$ is guaranteed to not be 1. By substitution,  $\frac{abg}{d} = c$. But this is a contradiction since $a$,  $b$ and $g$ do not share a factor with $n/i$,  so cannot cancel the $d$ out of the denominator. Therefore,  the expression cannot be equal to $c$,  a natural number. $u$ and $v$ do not share and edge.
\end{proof}

\begin{theorem}\label{2.1} $\Gamma(\mathbb{Z}_{p^2})$ is complete where $p$ is prime.
\end{theorem}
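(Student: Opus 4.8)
The plan is to proceed by first pinning down the vertex set of $\Gamma(\mathbb{Z}_{p^2})$ explicitly and then verifying adjacency of every pair directly. An element $a \in \mathbb{Z}_{p^2}$ is a unit precisely when $p \nmid a$, so the zero-divisors of $\mathbb{Z}_{p^2}$ are exactly the multiples of $p$. Removing $0$, I would claim that the vertex set $ZD(\mathbb{Z}_{p^2})$ is $\{kp : 1 \le k \le p-1\}$, a set of $p-1$ vertices. One direction is immediate, since $(kp)\cdot p = kp^2 \equiv 0 \pmod{p^2}$ shows each $kp$ is a nonzero zero-divisor; conversely any nonzero zero-divisor must share a nontrivial common factor with $p^2$ and hence be divisible by $p$.

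Next I would establish completeness by a one-line computation. Given two distinct vertices $u = ap$ and $v = bp$ with $1 \le a, b \le p-1$, their product is $uv = ab\,p^2 \equiv 0 \pmod{p^2}$, so $u$ and $v$ are adjacent. Since this holds for every pair of distinct vertices, $\Gamma(\mathbb{Z}_{p^2})$ is complete.

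Alternatively, I could phrase the argument through the type-graph machinery already developed. Every vertex $kp$ satisfies $\gcd(kp, p^2) = p$, so all nonzero zero-divisors lie in the single type class $T_p$. Because $p \cdot p = p^2 = 0$, the class $T_p$ carries a self-loop in the strong type graph $\Gamma^S(\mathbb{Z}_{p^2})$, and Lemma \ref{2.0} then guarantees that any two vertices of $T_p$ share an edge, again yielding completeness.

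I do not anticipate any genuine obstacle here: the content is essentially the observation that all nonzero zero-divisors of $\mathbb{Z}_{p^2}$ form a single type class whose square vanishes, and the adjacency check reduces to the trivial fact that $p^2 \equiv 0 \pmod{p^2}$. The only point requiring any care is the clean identification of the vertex set, to be certain that no element outside the multiples of $p$ sneaks in as a zero-divisor and that $0$ itself is correctly excluded from the vertices.
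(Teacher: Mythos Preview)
Your proof is correct and follows essentially the same approach as the paper: both arguments observe that every nonzero zero-divisor of $\mathbb{Z}_{p^2}$ must be a multiple of $p$, and that the product of any two such multiples is divisible by $p^2$ and hence zero. Your write-up is simply more explicit about the vertex set and adds an (unnecessary but harmless) alternative route via Lemma~\ref{2.0}.
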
 
\begin{proof}
Take arbitrary zero divisors of $\mathbb{Z}_n$,  $u$ and $v$. $u$ and $v$ must both share a common factor with $n$,  and the only possible factor is $p$ since $p^2$ is zero. So both $u$ and $v$ have a factor of $p$. Then $u$ and $v$ share an edge. $\Gamma(\mathbb{Z}_{p^2})$ is complete.
\end{proof}

\begin{theorem}\label{2.2} $\Gamma(\mathbb{Z}_{p^x})$ where $p$ is prime and $x \geq 3$ is not complete.
\begin{proof}
Let $x\geq 3$.
\begin{enumerate}
\item[Case 1:] $p=2$: $p$ and $3p$ are distinct non-zero zero-divisors that are not connected.
\item[Case 2:] $p\ne 2$: $p$ and $2p$ are distinct non-zero zero-divisors that are not connected.
\end{enumerate}
\end{proof}
\end{theorem}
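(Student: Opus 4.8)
The plan is to prove non-completeness in the simplest possible way, namely by exhibiting a single pair of distinct vertices that fail to be adjacent. Recall that the vertices of $\Gamma(\mathbb{Z}_{p^x})$ are exactly the nonzero multiples of $p$, and that two such vertices $u$ and $v$ span an edge precisely when $uv \equiv 0 \pmod{p^x}$. Writing a candidate pair as $u = ap$ and $v = bp$, the product is $uv = ab\,p^2$, so the edge condition becomes $p^x \mid ab\,p^2$, i.e.\ $p^{x-2} \mid ab$. Since $x \geq 3$ forces $p^{x-2}$ to be a genuine positive power of $p$, it suffices to choose $a$ and $b$ with $p \nmid ab$; then $uv$ cannot vanish.

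The concrete choice I would make is $u = p$ (so $a = 1$) together with $v = cp$, where $c$ is the smallest integer exceeding $1$ that is coprime to $p$. For an odd prime this is $c = 2$, giving $v = 2p$; for $p = 2$ the value $c=2$ is no longer coprime to $p$, so I take $c = 3$ and $v = 3p$. This is the one place the argument branches, and it is the only subtlety worth flagging: one must pick the second multiplier so as to avoid the prime $p$ itself, which is exactly what the case split on $p = 2$ versus $p$ odd accomplishes.

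It then remains to verify the two routine conditions making $u$ and $v$ legitimate, distinct vertices. Distinctness is immediate since $1 \neq c$. That each is a nonzero zero-divisor holds because both are multiples of $p$ (hence zero-divisors) and both are nonzero modulo $p^x$: indeed $cp \not\equiv 0$ holds as soon as $p^x \nmid cp$, equivalently $p^{x-1} \nmid c$, and for $x \geq 3$ we have $p^{x-1} \geq p^2 > c$ in both cases. Finally, with $a = 1$ and $c$ coprime to $p$ we get $p \nmid ac$, so by the reduction above $uv \not\equiv 0 \pmod{p^x}$ and the two vertices are non-adjacent, proving $\Gamma(\mathbb{Z}_{p^x})$ is not complete.

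Since the entire argument is elementary, I do not expect any real obstacle; the only point demanding care is the boundary behavior at $p = 2$, where the naive second vertex $2p$ would itself be adjacent to $p$ and must be replaced by $3p$. As an alternative route one could instead invoke Lemma \ref{2.0}: the type class $T_p$ carries a self-loop in the strong type graph iff $p^{2} \equiv 0 \pmod{p^x}$, i.e.\ iff $x \leq 2$, so for $x \geq 3$ any two distinct elements of $T_p$ are non-adjacent, and $|T_p| = p^{x-2}(p-1) \geq 2$ guarantees such a pair exists. I would nonetheless prefer the direct counterexample above for its self-containedness.
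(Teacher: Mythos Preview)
Your proof is correct and follows essentially the same approach as the paper: the paper's proof is exactly the case split $p=2$ (use $p$ and $3p$) versus $p\neq 2$ (use $p$ and $2p$), which is precisely your choice of $c$. Your write-up simply fills in the elementary verifications that the paper leaves implicit.
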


\begin{theorem}\label{2.3} $\Gamma(\mathbb{Z}_n)$,  where $n \geq 2$ is complete iff $n=p^2$.
\end{theorem}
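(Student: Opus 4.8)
The plan is to prove the two directions separately, reusing the prime-power analysis already in hand and supplying a short direct argument when $n$ has several prime factors. The sufficiency ($\Leftarrow$) is immediate: if $n=p^2$, then $\Gamma(\mathbb{Z}_n)$ is complete by Theorem~\ref{2.1}, so nothing new is needed there.

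For necessity ($\Rightarrow$) I would argue by contraposition: assuming $n\neq p^2$ for every prime $p$, I exhibit two distinct nonzero zero-divisors that are non-adjacent. Split on the prime factorization $n=p_1^{\alpha_1}\cdots p_m^{\alpha_m}$. If $m=1$, then $n=p^a$; the case $a=1$ gives a field with no zero-divisors (a degenerate case worth flagging explicitly, since the empty graph is arguably vacuously complete), while $a\geq 3$ is exactly Theorem~\ref{2.2}. This reduces everything to the genuinely new case $m\geq 2$.

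So suppose $n$ has two distinct prime divisors $p<q$. The naive candidates here are $p$ and $q$ themselves; since $p\mid n$ and $q\mid n$ with $\gcd(p,q)=1$ we get $pq\mid n$, so both are legitimate nonzero zero-divisors, and their product is $pq$. Now $pq\equiv 0 \pmod n$ iff $n\mid pq$, which together with $pq\mid n$ forces $n=pq$. Hence whenever $n\neq pq$ the pair $(p,q)$ is non-adjacent and we are done. The one remaining configuration, $n=pq$ with $p<q$ (so $q\geq 3$), is the only real obstacle, because there $p$ and $q$ annihilate each other. To handle it I would instead use two distinct nonzero multiples of the smaller prime, namely $p$ and $2p$: both are nonzero zero-divisors, they are distinct, and $p\cdot 2p=2p^2\equiv 0\pmod{pq}$ would require $q\mid 2p$, i.e.\ $q\mid 2$, which is impossible as $q\geq 3$. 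Thus $(p,2p)$ is a non-adjacent pair and $\Gamma(\mathbb{Z}_{pq})$ is not complete either.

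I expect the $n=pq$ sub-case to be the main subtlety, precisely because the obvious witnesses fail and one must pass to two vertices of the same type class. In fact an alternative, more structural route would channel the entire necessity through Lemma~\ref{2.0}: completeness forces every type class with at least two elements to carry a self-loop in $\Gamma^S(\mathbb{Z}_n)$ (equivalently $i^2\equiv 0$), and one checks that this, together with completeness of the strong type graph, pins down $n=p^2$. I would keep the elementary case analysis above as the primary argument since it is self-contained, and only mention the Lemma~\ref{2.0} viewpoint as a remark.
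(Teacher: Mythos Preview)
Your proof is correct and follows the same overall strategy as the paper's: invoke Theorems~\ref{2.1} and~\ref{2.2} for the prime-power case and exhibit an explicit non-adjacent pair of zero-divisors when $n$ has at least two distinct prime factors. The only tactical difference is the choice of witnesses in the multi-prime case: the paper takes the smallest prime $p$ and uses the single pair $(p,p^2)$, noting that $p^2<pq\le n$ so $p^2$ is a nonzero zero-divisor distinct from $p$, while $p\cdot p^2=p^3$ is not divisible by $n$ since it lacks the factor $q$; this handles all $m\ge2$ at once and sidesteps your separate $n=pq$ sub-case. Your explicit flagging of the degenerate $n=p$ case (empty graph) is a point the paper passes over silently.
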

\begin{proof}
Let $\Gamma(\mathbb{Z}_n)$ be complete. Assume two or more distinct prime factors of $n$ exist. Label the smallest such factor by $p$. Now choose another distinct prime factor of $n$ as $q$. $p$ is a zero divisor and shares an edge with $n/p$. Since $p$ and $q$ are both prime factors of $n$,  $pq\leq n$. Also,  since $p<q$,  $p^2<pq$. So $p^2<pq\leq n$ which means $p^2$ is non-zero and distinct from $p$. $p^2$ shares an edge with $n/p$ so $p^2$ is a distinct zero-divisor that does not share an edge with $p$,  making $\Gamma(\mathbb{Z}_n)$ not complete. So $n$ must only have one prime factor. Then,  by Theorem 2.2,  $\Gamma(\mathbb{Z}_{p^x})$ is not complete if $x\geq 3$. So $x=2$. So when $\Gamma(\mathbb{Z}_n)$ is complete,  $n=p^2$. 
The converse follows by Theorem \ref{2.1}.
\end{proof}

\begin{theorem}\label{2.4} $\Gamma(\mathbb{Z}_n)$ is k-partite if $\Gamma^S(\mathbb{Z}_n)$ is k-partite.
\end{theorem}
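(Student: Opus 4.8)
The plan is to lift a $k$-partition of the strong type graph to a $k$-partition of $\Gamma(\mathbb{Z}_n)$ itself. Recall that a graph is $k$-partite precisely when its vertex set can be split into $k$ independent sets. So suppose $\Gamma^S(\mathbb{Z}_n)$ is $k$-partite, and fix a partition of its vertices---the type classes $T_i$---into independent sets $V_1,\dots,V_k$. The natural candidate partition of $ZD(\mathbb{Z}_n)$ is obtained by replacing each type class by the ring elements it contains: set $W_j=\bigcup_{T_i\in V_j}T_i$. Since every vertex of $\Gamma(\mathbb{Z}_n)$ lies in exactly one type class (Theorem \ref{1.1}), the sets $W_1,\dots,W_k$ partition $ZD(\mathbb{Z}_n)$, and it remains only to check that each $W_j$ is independent in $\Gamma(\mathbb{Z}_n)$.

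First I would record the role of the self-loops. In the strong type graph a class $T_i$ carries a self-loop exactly when it fails to be an independent set in the loop-graph, so no $V_j$ can contain a self-looped class; every type class appearing in the partition is loop-free. By Lemma \ref{2.0}, a loop-free type class contains no internal edges, i.e. each such $T_i$ is an independent set in $\Gamma(\mathbb{Z}_n)$. This disposes of pairs of vertices lying in the same type class. For the remaining case, take $u\in T_i$ and $v\in T_{i'}$ with $T_i,T_{i'}$ distinct classes in the same part $V_j$. Because $V_j$ is independent in $\Gamma^S(\mathbb{Z}_n)$, the classes $T_i$ and $T_{i'}$ share no edge in the type graph, and Theorem \ref{1.3} then forces $u$ and $v$ to be non-adjacent in $\Gamma(\mathbb{Z}_n)$.

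Combining the two cases, no two vertices of $W_j$ are adjacent, so each $W_j$ is independent and $\Gamma(\mathbb{Z}_n)$ is $k$-partite. The one point requiring care---and the main obstacle---is the bookkeeping around self-loops: one must observe that the hypothesis that $\Gamma^S(\mathbb{Z}_n)$ is $k$-partite already rules out self-looped type classes, and then invoke Lemma \ref{2.0} to convert the absence of a self-loop into genuine independence inside each class. Once that is in hand, Theorem \ref{1.3} handles the inter-class edges and the lift goes through verbatim.
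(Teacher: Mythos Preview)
Your proof is correct and follows essentially the same approach as the paper: lift the partition $V_1,\dots,V_k$ of $\Gamma^S(\mathbb{Z}_n)$ to $W_j=\bigcup_{T_i\in V_j}T_i$, then check independence by splitting into the same-type-class case (handled via Lemma~\ref{2.0} and the absence of self-loops) and the different-type-class case (handled via Theorem~\ref{1.3}). The paper's argument is identical in structure, differing only in notation and in citing~\cite{jK55} rather than Theorem~\ref{1.3} for the inter-class step.
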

\begin{proof}
Let $\Gamma^S(\mathbb{Z}_n)$ be k-partite. Then $\Gamma^S(\mathbb{Z}_n)$ can be partitioned into $k$ disjoint subsets $S_1, S_2, \cdots, S_k$ such that no vertex in the same set share an edge. Partition $\Gamma(\mathbb{Z}_n)$ into a similar grouping $Q_1, Q_2, \cdots, Q_k$ where $u\in Q_i$ iff $u\in T_u \in S_i$. Consider arbitrary $u$ and $v$,  vertices of $\Gamma(\mathbb{Z}_n)$ that are in the same partitioned set $Q_i$.\\ 
\begin{enumerate}
\item[Case 1:] $u$ and $v$ are in different type classes.\\
Call such classes $T_u$ and $T_v$. Then since $u$ and $v$ are both in $Q_i$,  $T_u$ and $T_v$ are both in $S_i$ which means $T_u$ does not share an edge with $T_v$. So,  by ~\cite{jK55} $u$ and $v$ do not share an edge.\\
\item[Case 2:] $u$ and $v$ are in the same type class.\\
Call this class $T_u$. Then since $\Gamma^S(\mathbb{Z}_n)$ is k-partite,  $T_u$ does not form a loop with itself. Hence,  by Lemma \ref{2.0},  $u$ and $v$ do not share an edge.
\end{enumerate}
\end{proof}

\begin{theorem}\label{2.5} $\Gamma(\mathbb{Z}_n)$ is complete k-partite if $\Gamma^S(\mathbb{Z}_n)$ is complete k-partite.
\begin{proof}
Let $\Gamma^S(\mathbb{Z}_n)$ be complete k-partite. Then by Theorem 2.4,  $\Gamma(\mathbb{Z}_n)$ is k-partite.
Using the partition used in Theorem \ref{2.4},  if we let $\Gamma^S(\mathbb{Z}_n)$ be partitioned into $k$ disjoint subsets $S_1, S_2, \cdots, S_k$,  then $\Gamma(\mathbb{Z}_n)$ can be partitioned into $k$ disjoint subsets $Q_1, Q_2, \cdots, Q_k$,  where arbitrary vertex of $\Gamma(\mathbb{Z}_n)$ is in $Q_i$ if its type class is in $S_i$. Consider arbitrary vertices in $\Gamma(\mathbb{Z}_n)$,  $u$ and $v$,  that are not in the same $Q_i$. Then $u$ and $v$ must be in different  type classes in two different $S_i$'s. Call these classes $T_u$ and $T_v$. Since $\Gamma^S(\mathbb{Z}_n)$ is complete k-partite,  $T_u$ and $T_v$ share an edge. Then $u$ and $v$ share an edge by ~\cite{jK55}.
\end{proof}
\end{theorem}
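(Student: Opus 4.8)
The plan is to combine the partition already built in Theorem~\ref{2.4} with the edge-lifting principle of Theorem~\ref{1.2}. Since a complete $k$-partite graph is in particular $k$-partite, the hypothesis that $\Gamma^S(\mathbb{Z}_n)$ is complete $k$-partite gives, through Theorem~\ref{2.4}, that $\Gamma(\mathbb{Z}_n)$ is $k$-partite. Concretely, if $S_1,\dots,S_k$ is the partition of $\Gamma^S(\mathbb{Z}_n)$, then the induced partition $Q_1,\dots,Q_k$ of $\Gamma(\mathbb{Z}_n)$, where $u\in Q_i$ exactly when its type class lies in $S_i$, already has no edges inside any single part. Thus the only thing left to establish is the \emph{complete} half of the definition: that any two vertices lying in two different parts are in fact joined by an edge.

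First I would take arbitrary $u\in Q_i$ and $v\in Q_j$ with $i\neq j$ and locate their type classes. By the definition of the partition, $T_u\in S_i$ and $T_v\in S_j$; since the $S_\ell$ are pairwise disjoint, $T_u$ and $T_v$ are \emph{distinct} type classes sitting in different parts of $\Gamma^S(\mathbb{Z}_n)$. Because $\Gamma^S(\mathbb{Z}_n)$ is complete $k$-partite, vertices in different parts are adjacent, so $T_u$ and $T_v$ share an edge in the strong type graph. As this edge joins two distinct vertices, the self-loops that are the only difference between $\Gamma^S(\mathbb{Z}_n)$ and $\Gamma^T(\mathbb{Z}_n)$ play no role, so $T_u$ and $T_v$ are also adjacent in the ordinary type graph $\Gamma^T(\mathbb{Z}_n)$.

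Next I would lift this adjacency down to the zero-divisor graph. By Theorem~\ref{1.2}, distinct type classes $T_u$ and $T_v$ share an edge in $\Gamma^T(\mathbb{Z}_n)$ if and only if every element of $T_u$ is adjacent to every element of $T_v$ in $\Gamma(\mathbb{Z}_n)$; in particular $u$ and $v$ are adjacent, which is exactly the required completeness between parts. Together with the $k$-partiteness supplied by Theorem~\ref{2.4}, this shows $\Gamma(\mathbb{Z}_n)$ is complete $k$-partite.

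The step I expect to need the most care is the bookkeeping that two vertices in different parts genuinely lie in different type classes, since the edge-lifting theorem (Theorem~\ref{1.2}) is stated only for \emph{distinct} classes and I must rule out $u$ and $v$ coming from a common class. This is controlled by the disjointness of the $S_\ell$ together with the fact that each vertex lies in exactly one type class (Theorem~\ref{1.1}): a single class cannot straddle two parts, so distinct parts force distinct classes. Everything else is a direct appeal to the hypothesis on $\Gamma^S(\mathbb{Z}_n)$ and the two cited results.
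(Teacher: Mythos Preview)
Your proposal is correct and follows essentially the same route as the paper: invoke Theorem~\ref{2.4} for $k$-partiteness via the induced partition $Q_1,\dots,Q_k$, then for $u,v$ in different parts use the complete $k$-partiteness of $\Gamma^S(\mathbb{Z}_n)$ to get an edge between $T_u$ and $T_v$, and finally lift that edge to $\Gamma(\mathbb{Z}_n)$ via the type-class edge correspondence (the paper cites~\cite{jK55} where you cite Theorem~\ref{1.2}). Your additional care in verifying $T_u\neq T_v$ via Theorem~\ref{1.1} and in noting that the strong/ordinary type graphs agree on edges between distinct vertices is a welcome clarification but not a departure from the paper's argument.
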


\begin{remark}The converse of Theorem \ref{2.4} and \ref{2.5} is not always true. If the zero-divisor graph is k-partite,  but has a self-annihilating vertex,  the strong type graph will have a self-loop,  which prevents it from being k-partite.
\end{remark}

\begin{theorem}\label{2.6} If $n$ is square free,  $\Gamma(\mathbb{Z}_{n})$ is k-partite,  where $k$ is the number of distinct prime factors of $n$.
\end{theorem}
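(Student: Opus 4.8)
The plan is to reduce everything to the strong type graph via Theorem \ref{2.4}: it suffices to show that $\Gamma^S(\mathbb{Z}_n)$ is $k$-partite, since then $\Gamma(\mathbb{Z}_n)$ is $k$-partite as well. Write $n = p_1 p_2 \cdots p_k$ as a product of $k$ distinct primes; this is exactly the square-free hypothesis. Recall that the vertices of the type graph are the classes $T_a$ indexed by divisors $a$ of $n$ with $1 < a < n$ (the choice $a = n$ gives $T_a = \varnothing$ once $0$ is removed), and that in the strong type graph $T_a$ carries a self-loop precisely when $a^2 \equiv 0 \pmod{n}$, by Lemma \ref{2.0}.

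First I would dispose of self-loops, which is where square-freeness does the real work. If $a^2 \equiv 0 \pmod{n}$ then, since $n$ is square-free, every prime $p_i$ divides $a^2$ and hence divides $a$; thus $n \mid a$, forcing $a = n$ and $T_a = \varnothing$. Consequently no nonempty type class carries a self-loop, so $\Gamma^S(\mathbb{Z}_n)$ coincides with the ordinary type graph and has a chance of being $k$-partite at all.

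Next I would encode the combinatorics. To each divisor $a$ with $T_a \neq \varnothing$ associate the set $S_a \subseteq \{1, \ldots, k\}$ of indices $i$ with $p_i \mid a$; this $S_a$ is nonempty (as $a > 1$) and proper (as $a < n$). Because $n$ is square-free, $T_a$ and $T_b$ are adjacent iff $n \mid ab$ iff every prime divides $ab$ iff $S_a \cup S_b = \{1, \ldots, k\}$. Now color $T_a$ by $c(a) := \min\{\, i : i \notin S_a \,\}$, the least index of a prime not dividing $a$; this is well defined since $S_a$ is proper, and it takes values in $\{1, \ldots, k\}$. If $c(a) = c(b) = i$, then $i \notin S_a \cup S_b$, so $S_a \cup S_b \neq \{1, \ldots, k\}$ and $T_a, T_b$ are non-adjacent. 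Hence $c$ is a proper $k$-coloring, $\Gamma^S(\mathbb{Z}_n)$ is $k$-partite, and Theorem \ref{2.4} finishes the argument.

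The proof is not really hard once the coloring is found, so the only genuine obstacle is locating the partition, and the ``smallest missing prime'' rule above is the natural choice. I would also remark that $k$ is best possible: the $k$ vertices $n/p_1, \ldots, n/p_k$ are pairwise adjacent, since $p_i p_j \mid n$ for $i \neq j$, so $\Gamma(\mathbb{Z}_n)$ contains a $k$-clique and cannot be $(k-1)$-partite. Alternatively, one can bypass the type graph entirely and color each nonzero zero-divisor $u$ directly by the least index $i$ with $p_i \nmid u$ (such an $i$ exists, for otherwise $n \mid u$ and $u = 0$), which is simply the image of the above coloring under the collapse $u \mapsto T_{\gcd(u,n)}$.
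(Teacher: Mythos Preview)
Your proof is correct and follows essentially the same route as the paper: both reduce to the strong type graph via Theorem~\ref{2.4} and both use the ``smallest missing prime'' partition---the paper places $T_a$ in $S_i$ when $p_i$ is the least prime not dividing $a$, which is exactly your coloring $c(a)=\min\{i:p_i\nmid a\}$. Your explicit disposal of self-loops and the added optimality remark (the $k$-clique on $n/p_1,\ldots,n/p_k$) are nice touches not present in the paper, but the core argument is the same.
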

\begin{proof}
Consider the strong type graph $\Gamma^S(\mathbb{Z}_{n})$. Let,  $n = p_1p_2\cdots p_k$. Partition the graph into $k$ sets $S_1, S_2, \cdots, S_k$. A vertex $T_a$ in the strong type graph is in $S_i$ if $gcf(a,  p_i) = 1$ and $gcf(a,  p_h) > 1$ for all $h<i$.\\
We now claim that $S_1, S_2, \cdots, S_k$ covers all the vertices of $\Gamma^S(\mathbb{Z}_{n})$.\\
Assume there is a $T_a$ that is not in any $S_i$. Since $T_a$ is a vertex,  $a$ must be a factor of $n$ that is also less than $n$. So $a$ must omit at least one $p_i$. So $gcf(a,  p_i) = 1$. Since $T_a$ is not in any $S_i$,  there must exist some $h<i$ such that $gcf(a,  p_h) = 1$. Choose the smallest index $h$ of such $p_h$. Then $T_a$ must be in $S_h$ which is a contradiction.\\
Our next claim is any two vertices $u$ and $v$ in the same partition do not share an edge.\\
Consider arbitrary $u$ and $v$ in $S_i$. Both $u$ and $v$ do not contain $p_i$ so they do not share an edge. So the strong type graph is k-partite.\\
By Theorem \ref{2.4},  $\Gamma(\mathbb{Z}_{p_1p_2\cdots p_k})$ is k-partite.
\end{proof}

\begin{lemma}\label{2.7} Arbitrary type class $T_a$ in $\Gamma^T(\mathbb{Z}_n)$ contains only one element iff $a=\frac{n}{2}$.
\end{lemma}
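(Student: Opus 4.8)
The plan is to count the elements of $T_a$ exactly. Recall that $T_a=\{x\in ZD(\mathbb{Z}_n)\mid \gcd(x,n)=a\}$, where $a$ is a divisor of $n$ with $1<a<n$, so that $t:=n/a$ is an integer with $t\ge 2$. First I would write an arbitrary $x\in T_a$ as $x=ca$. Since $\gcd(ca,n)=a\,\gcd(c,t)$, the condition $\gcd(x,n)=a$ is equivalent to $\gcd(c,t)=1$, and the requirement $1\le x\le n-1$ forces $1\le c\le t-1$ (no larger or smaller $c$ can give a nonzero element of $\mathbb{Z}_n$). This sets up a bijection between $T_a$ and the set of residues in $\{1,\dots,t-1\}$ that are coprime to $t$; because $t$ itself is not coprime to $t$, that count equals $\phi(t)$, so $|T_a|=\phi(n/a)$, where $\phi$ is Euler's totient function.

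Given this count, the lemma reduces to the elementary fact that $\phi(m)=1$ precisely when $m\in\{1,2\}$. Since $t=n/a\ge 2$ in our setting, $|T_a|=1$ holds iff $t=2$, i.e.\ iff $a=n/2$, which yields both directions of the equivalence simultaneously. (In particular, when $n$ is odd every such $t$ is odd and at least $3$, so no singleton type class exists, matching the fact that $a=n/2$ is then not attainable.)

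For a fully self-contained argument that avoids the totient classification, I would instead treat the two directions directly. For the backward direction, if $a=n/2$ then $t=2$, and the only admissible value is $c=1$, so $T_a=\{n/2\}$. For the forward direction I would prove the contrapositive: if $a\ne n/2$ then $t\ge 3$, and I can exhibit two distinct members of $T_a$, namely $a$ itself (clearly $\gcd(a,n)=a$) and $n-a$. Using $a\mid n$ one checks $\gcd(n-a,n)=\gcd(n-a,a)=a$, so $n-a\in T_a$, and $n-a\ne a$ exactly because $a\ne n/2$; hence $|T_a|\ge 2$.

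The main obstacle is the careful bookkeeping in the first step: correctly translating $\gcd(x,n)=a$ into the coprimality condition $\gcd(c,n/a)=1$ and pinning down the range of $c$ so that no multiple of $a$ reducing to $0$ (or exceeding $n-1$) is erroneously counted. Everything afterward is routine, and the direct version sidesteps even the totient computation by producing the explicit second element $n-a$.
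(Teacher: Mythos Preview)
Your proposal is correct. Your ``self-contained'' version is in fact the paper's own argument in different clothing: the paper sets $f=n/a$, notes $f\ge 3$ when $a\ne n/2$, and exhibits the second element $a(f-1)$; since $a(f-1)=n-a$, this is exactly the witness you produce, and your gcd computation $\gcd(n-a,n)=a$ is the same verification the paper does via $\gcd(f-1,f)=1$.

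Your totient route is a genuinely cleaner alternative. By establishing $|T_a|=\phi(n/a)$ you reduce the lemma to the classification $\phi(m)=1\iff m\in\{1,2\}$, which handles both directions at once and, as a bonus, immediately gives the corollary stated right after the lemma (that $|T_{n/p}|=p-1$ for a prime factor $p$ of $n$), without the restriction to the smallest prime. The trade-off is that it invokes a standard number-theoretic fact rather than an explicit construction; the paper's approach is more hands-on but yields strictly less information.
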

\begin{proof}
Let $T_a \in \Gamma(\mathbb{Z}_n)$ have a type class that has only one element. Assume $a\neq \frac{n}{2}$. Since $a$ is a factor of $n$,  $\frac{n}{a}=f$ is also a factor of $n$. Note that $f \geq 3$.\\
Consider the vertex $a(f-1)$. The quantity $(f-1)$ does not share any factors with $f$. Since $af = n$,  $gcf(a(f-1), n) = a$. So $a(f-1)\in T_a$. Also note that $a < a(f-1) < n$. So $a(f-1)$ is a distinct vertex in $T_a$ which is a contradiction.
So $a= \frac{n}{2}$\\
\\
Let $a= \frac{n}{2}$. Then a is the only element in $T_a$ since $2a = n$.
\end{proof}

\begin{corollary}
Analogous to above,  $T_{n/p}$ in $\Gamma^T(\mathbb{Z}_n)$ contains exactly $p - 1$ elements if $p$ is the smallest prime factor of $n$.
\end{corollary}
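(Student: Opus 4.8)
The plan is to count the elements of $T_{n/p}$ directly, paralleling the structure of Lemma~\ref{2.7}. Writing $a = n/p$ and $f = n/a = p$, I would first observe that any element of the type class $T_a$ must be a multiple of $a$, since $\gcd(x,n) = a$ forces $a \mid x$. The nonzero multiples of $a$ inside $\mathbb{Z}_n$ are exactly $a, 2a, \dots, (p-1)a$, because $pa = n \equiv 0$; these are $p-1$ distinct vertices, so $|T_a| \le p-1$.

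Next I would verify that each of these candidates genuinely lies in $T_a$. For $1 \le k \le p-1$ one computes $\gcd(ka, n) = \gcd(ka, pa) = a\,\gcd(k,p)$, using the identity $\gcd(ka,pa) = a\,\gcd(k,p)$ for the positive integer $a$. Since $p$ is prime and $1 \le k < p$, we have $\gcd(k,p) = 1$, so $\gcd(ka, n) = a$ and hence $ka \in T_a$. This is the direct analogue of the observation in Lemma~\ref{2.7} that $a(f-1) \in T_a$ because $f-1$ shares no factor with $f$; here every multiplier $k$ from $1$ to $p-1$ is coprime to the prime $f = p$. Combining the upper bound with the fact that all $p-1$ multiples belong to $T_a$ gives $|T_{n/p}| = p-1$.

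There is essentially no hard step: the only things to check carefully are the gcd identity $\gcd(ka, pa) = a\,\gcd(k,p)$ and the fact that the multipliers $k = 1, \dots, p-1$ are exactly the residues coprime to $p$. I would also remark that, although the statement singles out the smallest prime factor $p$ (matching the $p=2$ case of Lemma~\ref{2.7}, where $T_{n/2}$ is the single-element class), the identical argument shows $|T_{n/q}| = q-1$ for \emph{every} prime $q$ dividing $n$; taking $q=p$ minimal merely exhibits the smallest such type class.
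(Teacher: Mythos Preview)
Your proof is correct and is exactly the direct count the paper has in mind when it labels the statement a corollary ``analogous to above''; the paper gives no separate proof, so your argument is the natural filling-in of Lemma~\ref{2.7}. Your closing remark is also correct and worth keeping: the hypothesis that $p$ be the \emph{smallest} prime factor is not used anywhere, and indeed $|T_{n/q}| = \phi(q) = q-1$ for every prime $q \mid n$.
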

\begin{lemma}\label{2.8} There is at most one type class with only one element.
\begin{proof}
Assume there are two or more distinct type classes that have only one element. Call two of these classes $T_u$ and $T_v$. By Lemma \ref{2.7},  $u=v=\frac{n}{2}$ which is a contradiction. 
\end{proof}
\end{lemma}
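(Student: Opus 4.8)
The plan is to argue by contradiction, leaning entirely on the characterization already established in Lemma \ref{2.7}. That lemma pins down the index of any singleton type class exactly: a type class $T_a$ in $\Gamma^T(\mathbb{Z}_n)$ consists of a single element precisely when $a = \frac{n}{2}$. Since type classes are labelled by their index $a$, and distinct indices give distinct classes, uniqueness of a singleton class reduces immediately to uniqueness of this index value.

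Concretely, I would suppose for contradiction that two \emph{distinct} type classes $T_u$ and $T_v$ each contain exactly one element. Applying Lemma \ref{2.7} to each one forces $u = \frac{n}{2}$ and $v = \frac{n}{2}$, so $u = v$ and hence $T_u = T_v$, contradicting the assumption that the two classes are distinct. This closes the argument in a single line once the lemma is invoked.

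There is essentially no computational obstacle here; the entire substance of the statement has already been absorbed into Lemma \ref{2.7}, and the present lemma is merely the bookkeeping observation that the value $\frac{n}{2}$ is uniquely determined by $n$. The one point worth flagging is the dependence on the parity of $n$: when $n$ is odd, $\frac{n}{2}$ is not an integer, so no admissible index $a$ can equal it and there are in fact \emph{zero} singleton type classes, which still satisfies the ``at most one'' conclusion; when $n$ is even, $T_{n/2}$ is the unique singleton. Either way the bound of one holds, and the contradiction argument above covers both cases uniformly without any case split being required.
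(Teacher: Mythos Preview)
Your proposal is correct and follows exactly the same approach as the paper: assume two distinct singleton type classes $T_u$ and $T_v$, invoke Lemma~\ref{2.7} to force $u=v=\frac{n}{2}$, and obtain a contradiction. Your additional remark on the parity of $n$ is a helpful clarification but not needed for the argument itself.
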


\begin{theorem}\label{2.9} $\Gamma(\mathbb{Z}_n)$ is k-partite if $\Gamma^S(\mathbb{Z}_n)$ is k-partite or $\Gamma^T(\mathbb{Z}_n)$ is k-partite and the only self-connected vertex of $\Gamma(\mathbb{Z}_n)$ is $T_\frac{n}{2}$.
\end{theorem}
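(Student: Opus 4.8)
The plan is to treat the hypothesis, which is a disjunction, one alternative at a time. Either $\Gamma^S(\mathbb{Z}_n)$ is $k$-partite, or else $\Gamma^T(\mathbb{Z}_n)$ is $k$-partite and the only self-connected type class is $T_{n/2}$. The first alternative is nothing more than Theorem~\ref{2.4}, so I would dispose of it in a single line and devote the real work to the second, which is the new content: it relaxes Theorem~\ref{2.4} by allowing the strong type graph to carry one self-loop, provided that loop sits at $T_{n/2}$.

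For the second alternative I would recycle the coloring built in Theorem~\ref{2.4}. Since $\Gamma^T(\mathbb{Z}_n)$ is $k$-partite, fix a partition of its type classes into independent sets $S_1,\dots,S_k$ and push it down to $\Gamma(\mathbb{Z}_n)$ by putting a vertex $u$ into $Q_i$ exactly when its type class lies in $S_i$. By Theorem~\ref{1.1} every vertex lands in exactly one block, so this is a genuine partition, and it remains only to verify that two distinct vertices $u,v$ sharing a block $Q_i$ are never adjacent. I would split this into the familiar two cases.

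If $u$ and $v$ lie in different type classes $T_u,T_v$, then $T_u$ and $T_v$ are distinct members of the same independent set $S_i$ and hence non-adjacent in $\Gamma^T(\mathbb{Z}_n)$; Theorem~\ref{1.3} then immediately yields that $u$ and $v$ are non-adjacent in $\Gamma(\mathbb{Z}_n)$. The decisive case is when $u$ and $v$ lie in a common type class $T_u$. By Lemma~\ref{2.0} they are adjacent precisely when $T_u$ carries a self-loop in $\Gamma^S(\mathbb{Z}_n)$, i.e. precisely when $T_u$ is self-connected. By hypothesis the only self-connected class is $T_{n/2}$, and by Lemma~\ref{2.7} the class $T_{n/2}$ has a single element. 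A class containing the two distinct vertices $u\neq v$ therefore cannot be $T_{n/2}$, so it cannot be self-connected, and Lemma~\ref{2.0} gives that $u$ and $v$ are non-adjacent. This is exactly where the argument beats Theorem~\ref{2.4}: the one permitted self-loop is harmless because Lemma~\ref{2.7} guarantees it rests on a singleton class, which has no second vertex to be forced out of its block.

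I expect the only real friction to be interpretive rather than mathematical, namely pinning down the reading of ``self-connected vertex $T_{n/2}$'' as ``the unique type class bearing a self-loop in $\Gamma^S(\mathbb{Z}_n)$ is $T_{n/2}$,'' and checking this matches the self-annihilation condition $(n/2)^2\equiv 0\ (\mathrm{mod}\ n)$ on the underlying element. Once that convention is fixed, the proof is a direct transcription of Theorem~\ref{2.4} with Lemma~\ref{2.7} supplying the single extra observation that makes the weakened hypothesis go through.
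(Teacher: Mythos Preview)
Your proposal is correct and follows essentially the same approach as the paper: handle the first disjunct by invoking Theorem~\ref{2.4}, then for the second disjunct push the $k$-partition of $\Gamma^T(\mathbb{Z}_n)$ down to $\Gamma(\mathbb{Z}_n)$ and check the two cases (different type classes via Theorem~\ref{1.3}, same type class via Lemma~\ref{2.0} and Lemma~\ref{2.7}). The only cosmetic difference is that the paper treats the same-type-class case first and cites the singleton property of $T_{n/2}$ by a mislabeled reference, whereas you name Lemma~\ref{2.7} explicitly.
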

\begin{proof}
Let $\Gamma^S(\mathbb{Z}_n)$ be k-partite. By Theorem \ref{2.4},  $\Gamma(\mathbb{Z}_n)$ is k-partite. Let $\Gamma^T(\mathbb{Z}_n)$ be k-partite and let $\Gamma^S(\mathbb{Z}_n)$ have only one self-connected vertex,  $T_\frac{n}{2}$. Consider arbitrary distinct $u$ and $v$,  zero divisors of $\Gamma(\mathbb{Z}_n)$,  that are in the same partition.\\
\begin{enumerate}
\item[Case 1:] $u$ and $v$ are in the same type class.\\
By Lemma \ref{1.10},  $T_\frac{n}{2}$ has only one element,  so if $u$ and $v$ are distinct,  they cannot be in $T_\frac{n}{2}$. Then the type class they are in are not self-connected so $u$ and $v$ do not share an edge.\\
\item[Case 2:] $u$ and $v$ are in different type classes.\\
Since $u$ and $v$ are in the same partition,  their type classes are in the same partition and do not share an edge. Thus,  $u$ and $v$ do not share an edge.\\
\end{enumerate}
\end{proof}

\begin{lemma}\label{2.10} A vertex in $\Gamma(\mathbb{Z}_n)$ annihilates itself iff it is a multiple of $n^*$.
\end{lemma}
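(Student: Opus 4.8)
The plan is to reduce both the self-annihilation condition and the divisibility condition to inequalities on prime-power exponents and then observe that these inequalities match coordinate by coordinate. Write $n = p_1^{\alpha_1}\cdots p_m^{\alpha_m}$, so that by definition $n^* = p_1^{\beta_1}\cdots p_m^{\beta_m}$ with $\beta_i = \lceil \alpha_i/2\rceil$. For a vertex $v$, viewed as its integer representative in $\{1,\dots,n-1\}$, let $\gamma_i$ denote the exact power of $p_i$ dividing $v$.

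First I would restate each side in terms of the $\gamma_i$. The vertex $v$ annihilates itself exactly when $v^2 \equiv 0 \pmod n$, i.e. $n \mid v^2$ as integers; since the $p_i$-adic valuation of $v^2$ is $2\gamma_i$, comparing valuations shows this holds iff $2\gamma_i \ge \alpha_i$ for every $i$. On the other side, $v$ is a multiple of $n^*$ iff $n^* \mid v$, which by the same valuation comparison holds iff $\gamma_i \ge \beta_i$ for every $i$.

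It then suffices to prove, for each fixed $i$, the elementary equivalence $2\gamma_i \ge \alpha_i \iff \gamma_i \ge \beta_i$, where $\gamma_i,\alpha_i$ are non-negative integers and $\beta_i = \lceil \alpha_i/2\rceil$. Because $\gamma_i$ is an integer, $2\gamma_i \ge \alpha_i$ is equivalent to $\gamma_i \ge \alpha_i/2$, and the least integer satisfying the latter is precisely $\lceil \alpha_i/2\rceil = \beta_i$; hence $\gamma_i \ge \alpha_i/2 \iff \gamma_i \ge \beta_i$. Chaining these equivalences over all $i$ yields the lemma.

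The only place requiring any care — and hence the step I would flag as the main obstacle, though it is a minor one — is the parity of $\alpha_i$. When $\alpha_i$ is even the rounding is vacuous, but when $\alpha_i$ is odd the half-integer threshold $\alpha_i/2$ forces the use of the ceiling rather than the floor or ordinary integer division; checking that $\lceil \alpha_i/2\rceil$ is exactly the integer cutoff in both parities is what makes the definition of $n^*$ the correct one. I would therefore spell out the two parity cases briefly to confirm that the equivalence is tight.
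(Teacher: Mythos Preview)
Your proof is correct. The paper states Lemma~\ref{2.10} without proof, so there is no argument to compare against; your valuation-by-valuation reduction to the equivalence $2\gamma_i \ge \alpha_i \iff \gamma_i \ge \lceil \alpha_i/2\rceil$ is exactly the natural way to establish it and would serve well as the omitted justification.
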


\begin{lemma}\label{2.11} Consider two arbitrary vertices in $\Gamma(\mathbb{Z}_n)$ $u$ and $v$ such that $u$ is a factor of $v$. The largest clique containing $v$,  $M_v$ has a magnitude greater than or equal to the $M_u$,  the largest clique containing $u$.
\end{lemma}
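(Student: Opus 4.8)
The plan is to exploit a simple monotonicity: annihilation is preserved when we pass from $u$ to any of its multiples. First I would record the key observation. Since $u$ is a factor of $v$, write $v = mu$ for some positive integer $m$ (working with the representatives in $\{1, 2, \ldots, n-1\}$). Then for every vertex $w$ with $uw \equiv 0 \pmod n$ we get $vw = m(uw) \equiv 0 \pmod n$. Consequently every neighbor of $u$ in $\Gamma(\mathbb{Z}_n)$ that is distinct from $v$ is also a neighbor of $v$.

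With this in hand I would transplant a maximum clique from $u$ onto $v$. Let $M_u$ be a largest clique containing $u$. If $v \in M_u$, then $M_u$ is already a clique containing $v$, so $|M_v| \geq |M_u|$ and we are done. Otherwise, form $M' = (M_u \setminus \{u\}) \cup \{v\}$. The vertices of $M_u \setminus \{u\}$ remain pairwise adjacent, since they were so in $M_u$; moreover each of them is distinct from $v$ (because $v \notin M_u$) and adjacent to $v$ by the key observation. Hence $M'$ is a clique containing $v$ with $|M'| = |M_u|$, so $|M_v| \geq |M'| = |M_u|$.

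The only points requiring care are the clique conditions for $M'$, and the case split on whether $v \in M_u$ is precisely what handles them: distinctness of $v$ from the remaining vertices comes from $v \notin M_u$, and the needed edges out of $v$ come from the fact that a multiple of an annihilator is again an annihilator. I would stress that a clique only requires adjacency between \emph{distinct} vertices, so at no stage do we need $v$ to annihilate itself; this keeps the argument elementary and avoids any appeal to $\Gamma^S(\mathbb{Z}_n)$ or to $n^*$. I do not anticipate a genuine obstacle here, since the entire content is the one-line divisibility computation $vw = m(uw)$ together with the vertex swap.
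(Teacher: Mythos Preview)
Your proposal is correct and follows essentially the same approach as the paper: replace $u$ by $v$ in the maximum clique $M_u$, using the divisibility observation $vw = m(uw)$ to see that every neighbor of $u$ is a neighbor of $v$. The paper phrases this by contradiction and omits the explicit case split on whether $v \in M_u$, which you handle more carefully, but the underlying idea is identical.
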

\begin{proof}
Take arbitrary vertices $u$ and $v$ in $\Gamma(\mathbb{Z}_n)$. Let $u$ be a factor of $v$. Assume the opposite,  that $M_u$ has a larger magnitude than that of $M_v$. Every element $e$ in $M_u\setminus u$ has the property $eu=0$. Then $\forall e\in M_u$,  $ev=0$. So a clique $C$ exists with $v$ and each $e$ in $M_u\setminus u$. $C$ has a magnitude equal to the magnitude of $M_u$ which is a contradiction since $M_v$ is the largest clique containing $v$.
\end{proof}

\begin{theorem}\label{2.12} $cl(\Gamma(\mathbb{Z}_n)) \geq \frac{n}{n^*} + k - 1$ where $k$ is the number of odd-power primes in the prime factorization of $n$. 
\end{theorem}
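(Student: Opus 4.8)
The plan is to exhibit an explicit clique of the required size $\frac{n}{n^*}+k-1$, assembled from two pieces: a large clique coming from the multiples of $n^*$, and one extra vertex harvested from each odd-power prime. Throughout write $n=p_1^{\alpha_1}\cdots p_m^{\alpha_m}$ and $n^*=p_1^{\beta_1}\cdots p_m^{\beta_m}$ with $\beta_i=\lceil \alpha_i/2\rceil$, so that $2\beta_i\ge \alpha_i$ for every $i$, with equality exactly when $\alpha_i$ is even.

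First I would take $S=\{\,j\,n^* : 1\le j\le \frac{n}{n^*}-1\,\}$, the set of nonzero multiples of $n^*$ in $\mathbb{Z}_n$, which has exactly $\frac{n}{n^*}-1$ elements. By Lemma~\ref{2.10} these are precisely the self-annihilating vertices, and more is true: for any two of them $u=a\,n^*$ and $v=b\,n^*$ the product $uv=ab(n^*)^2$ is a multiple of $(n^*)^2=\prod_i p_i^{2\beta_i}$. Since $2\beta_i\ge \alpha_i$ for all $i$, we have $n\mid (n^*)^2$, hence $uv\equiv 0$, so $S$ is a clique.

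Next, for each prime $p_i$ whose exponent $\alpha_i$ is odd, I would adjoin the vertex $w_i=n^*/p_i$; there are $k$ of these. The key point is that for an odd exponent $2\beta_i-1=\alpha_i$, so removing one factor of $p_i$ from $n^*$ still leaves exactly enough room. Concretely I would verify three things. (i) Each $w_i$ satisfies $0<w_i<n^*$, so $w_i\notin S$ (every element of $S$ is at least $n^*$) and the $w_i$ are pairwise distinct; moreover $w_i$ is a genuine nonzero zero-divisor, since it annihilates the nonzero element $n^*\in S$ whenever $\frac{n}{n^*}\ge 2$, and in the squarefree case $w_i=\prod_{l\ne i}p_l$ plainly shares a factor with $n$ (here one assumes $\Gamma(\mathbb{Z}_n)$ is nonempty, i.e.\ $n$ is not prime). (ii) $w_i$ is adjacent to every element of $S$, because $w_i\cdot(j\,n^*)=j\,(n^*)^2/p_i$ has $p_i$-exponent $2\beta_i-1=\alpha_i$ and $p_l$-exponent $2\beta_l\ge\alpha_l$ for $l\ne i$, so it is divisible by $n$. (iii) For distinct odd-power primes $p_i,p_j$ the product $w_iw_j=(n^*)^2/(p_ip_j)$ has $p_i$-exponent $\alpha_i$, $p_j$-exponent $\alpha_j$, and $p_l$-exponent $2\beta_l\ge\alpha_l$ otherwise, so again $n\mid w_iw_j$ and $w_i,w_j$ are adjacent.

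Combining the two pieces, $S\cup\{\,w_i:\alpha_i\text{ odd}\,\}$ is a clique on $\bigl(\frac{n}{n^*}-1\bigr)+k$ vertices, which gives $cl(\Gamma(\mathbb{Z}_n))\ge \frac{n}{n^*}+k-1$. The part requiring the most care is the exponent bookkeeping in (ii) and (iii): the bound holds precisely because each odd exponent makes $2\beta_i$ overshoot $\alpha_i$ by exactly one, supplying a single unit of slack per odd-power prime, and it is this slack that lets all $k$ vertices $w_i$ be added at once while staying mutually adjacent and adjacent to all of $S$. The squarefree edge case $n^*=n$ (so $S=\varnothing$ and the clique is just $\{w_i\}$) is covered by the same computation, so no separate argument is needed there.
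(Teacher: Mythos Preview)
Your proof is correct and follows essentially the same approach as the paper: build a clique from the nonzero multiples of $n^*$ (giving $\frac{n}{n^*}-1$ vertices) and then adjoin the $k$ vertices $n^*/p_i$ for each odd-power prime $p_i$, checking the same three adjacency conditions. Your exponent bookkeeping and handling of the edge cases (distinctness of the $w_i$, the squarefree case, the prime case) are in fact more carefully spelled out than in the paper's own argument.
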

\begin{proof}
We claim that any two multiples of $n^*$ share an edge.\\ Take two arbitrary multiples of $n^*$,  $an^*$ and $bn^*$. Since $(n^*)^2 \geq n$ these two vertices will share an edge. So the multiples of $n^*$ form a clique. Call it $C$. An arbitrary vertex of $C$ will be of the form $an^*$ for $1<a<\frac{n}{n^*}$. The amount of elements in this clique is $\frac{n}{n^*} - 1$,  so the clique number of the graph is at least $\frac{n}{n^*} - 1$. Now consider all vertices of the form $n^*/q$ where $q$ is an arbitrary odd-power prime in the prime factorization of $n$. Because $n^*$ has a factor of $q$ with power of half rounded up,  and $n^*/q$ has a power of half rounded down,  arbitrary $n^*/q$ shares an edge with each $an^*$ in $C$. Also,  each $n^*/q_1$ shares an edge to each other $n^*/q_2$. This is because the power of $q_1$ in $n^*/q_1$ is half rounded down and in $n^*/q_2$ it is half rounded up,  and likewise for $q_2$. Since $k$ is the number of distinct odd powered primes in the prime factorization of $n$,  $cl(\Gamma(\mathbb{Z}_n)) \geq \frac{n}{n^*} + k - 1$.
\end{proof}

\begin{theorem}\label{2.13} $cl(\Gamma(\mathbb{Z}_n)) \leq \frac{n}{n^*} + k - 1$ where $k$ is the number of odd-power primes in the prime factorization of $n$. 
\end{theorem}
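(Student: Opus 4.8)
The plan is to pass to exponent coordinates and reduce the whole statement to an elementary count over the primes, after which the bound matches the clique produced in Theorem~\ref{2.12} and pins the clique number down exactly. Write $n = p_1^{\alpha_1}\cdots p_m^{\alpha_m}$ and $n^* = p_1^{\beta_1}\cdots p_m^{\beta_m}$ with $\beta_j = \lceil \alpha_j/2\rceil$, and for a vertex $v$ record $\gcd(v,n) = \prod_j p_j^{a_j(v)}$. By Theorems~\ref{1.2}, \ref{1.3} and~\ref{1.4} adjacency depends only on the type class, and two distinct vertices $u,v$ in different classes are adjacent exactly when $a_j(u)+a_j(v)\ge \alpha_j$ for every $j$; two distinct vertices of the \emph{same} class are adjacent only if that class carries a self-loop, i.e. (Lemma~\ref{2.0} and Lemma~\ref{2.10}) only if $v$ is a multiple of $n^*$, equivalently $a_j(v)\ge \beta_j$ for all $j$.

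First I would fix a maximum clique $C$ and split it as $C = A\sqcup B$, where $A$ is the set of multiples of $n^*$ in $C$ and $B$ is the rest. Since there are only $\tfrac{n}{n^*}-1$ nonzero multiples of $n^*$, we get $|A|\le \tfrac{n}{n^*}-1$ for free. Call a vertex $v$ \emph{$j$-deficient} if $a_j(v)<\beta_j$. The key local fact is that $C$ contains \emph{at most one} $j$-deficient vertex for each fixed $j$: two $j$-deficient vertices $u,v$ satisfy $a_j(u)+a_j(v)\le \alpha_j-1<\alpha_j$ (whether $\alpha_j$ is even or odd), and they cannot be rescued by a self-loop since deficiency precludes being a multiple of $n^*$, so they would be non-adjacent. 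Each $v\in B$ is $j$-deficient for at least one $j$, and by the key fact the deficiency sets of distinct $B$-vertices are pairwise disjoint; hence $|B|\le |D|$, where $D$ is the set of primes at which some vertex of $C$ is deficient. Splitting $D=D_{\mathrm{odd}}\sqcup D_{\mathrm{even}}$ according to the parity of $\alpha_j$ and using $|D_{\mathrm{odd}}|\le k$ gives $|B|\le k+|D_{\mathrm{even}}|$.

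The crux is to show the extra $|D_{\mathrm{even}}|$ is always paid for by a shrinkage of $A$. If $v$ is $j$-deficient for an even-power prime $p_j$ then $a_j(v)\le \beta_j-1$, which forces every other vertex of $C$ (in particular every vertex of $A$) to have $a_j\ge \alpha_j-(\beta_j-1)=\beta_j+1$. Thus $A$ consists of nonzero multiples of $M=n^*\prod_{j\in D_{\mathrm{even}}}p_j$, so $|A|\le \tfrac{n}{M}-1$. Writing $P=\prod_{j\in D_{\mathrm{even}}}p_j$ and noting that each even $\alpha_j\ge 2$ gives $p_j\mid \tfrac{n}{n^*}$, we have $P\mid \tfrac{n}{n^*}$, say $\tfrac{n}{n^*}=PQ$ with $Q\ge 1$. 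Then
\[
|C| \;=\; |A|+|B| \;\le\; \Big(\tfrac{n/n^*}{P}-1\Big)+\big(k+|D_{\mathrm{even}}|\big) \;=\; Q+|D_{\mathrm{even}}|+k-1 ,
\]
and the desired inequality $|C|\le \tfrac{n}{n^*}+k-1 = PQ+k-1$ reduces to $|D_{\mathrm{even}}|\le Q(P-1)$, which holds since $P\ge 2^{|D_{\mathrm{even}}|}\ge |D_{\mathrm{even}}|+1$ and $Q\ge 1$. Combined with Theorem~\ref{2.12} this yields $cl(\Gamma(\mathbb{Z}_n)) = \tfrac{n}{n^*}+k-1$ exactly.

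The main obstacle is precisely the even-power case in the last paragraph: the naive disjointness count only gives $|B|\le m$, the total number of primes, which is far too weak. The saving comes from recognizing that deficiency at an even-power prime $p_j$ contracts the admissible pool of multiples of $n^*$ by the factor $p_j$, and that the elementary estimate $2^t\ge t+1$ guarantees this loss always dominates the at-most-$|D_{\mathrm{even}}|$ vertices gained. I would take care to verify the boundary cases ($D_{\mathrm{even}}=\emptyset$, and primes with $\alpha_j=2$ so that $M\mid n$ still holds) and to confirm that odd-power deficiencies force no such contraction, so they are correctly charged only against $k$.
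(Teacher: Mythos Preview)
Your argument is correct and follows the same overall strategy as the paper: split a maximum clique into the multiples of $n^*$ and the non-multiples, observe that distinct non-multiples have pairwise-disjoint deficiency primes (so there are at most $m$ of them), and then show that deficiencies at even-power primes are paid for by a shrinkage of the multiple-of-$n^*$ part.

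The only divergence is in how that last compensation is carried out. The paper exhibits, for each vertex $e_i$ deficient at an even-power prime, one specific multiple of $n^*$ (namely $p_1p_2\cdots p_{i-1}n^*$) that $e_i$ fails to annihilate; these excluded multiples are distinct, so $|A|\le \tfrac{n}{n^*}-1-|D_{\mathrm{even}}|$ and the $|D_{\mathrm{even}}|$ terms cancel on the nose. You instead push all the even-power constraints simultaneously to get the sharper $|A|\le \tfrac{n}{n^*P}-1$ with $P=\prod_{j\in D_{\mathrm{even}}}p_j$, and then close with the elementary estimate $P\ge 2^{|D_{\mathrm{even}}|}\ge |D_{\mathrm{even}}|+1$. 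Both finishes are valid; the paper's one-to-one exclusion gives a cleaner cancellation, while yours makes the divisibility structure of $A$ more explicit and actually yields a stronger intermediate bound on $|A|$ before the final inequality.
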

\begin{proof}
Consider arbitrary clique $C$. Partition $C$ into sets $L$ and $N$ where $L$ is the set of vertices of $C$ that are not multiples of $n^*$ and $N$ is the set of vertices of $C$ that are multiples of $n^*$. Consider arbitrary vertex $l_1$ in $L$. Since $l_1$ is not a multiple of $n^*$,  there must be some prime factor $p_1$ of $n$ whose power in $l_1$ is less than half of its power in $n$ (since if every prime factor was greater than or equal to half,  $l_1$ would be a multiple of $n^*$). Every other $l_i$ in $L$ must have its $p_1$ factor with a power greater than or equal to half its power in $n$ for it to share an edge with $l_1$. Consider another vertex $l_2$ in $L$. $l_2$ must also have a prime factor whose power is less than half its power in $n$,  but it cannot be $p_1$. Call it $p_2$. So each $l_i$ in $L$ must have a distinct prime factor $p_i$ that has a power less that or equal to half its power in $n$. Let $m$ be the number of distinct prime factors of $n$. Then there can be a maximum of $m$ many $l_i$ in $L$. $N$ has a maximum size of $\frac{n}{n^*}-1$,  so the clique number is at most $\frac{n}{n^*}+m-1$.\\
Consider some $e_1$,  a vertex in $L$ whose corresponding $p_1$,  has an even power in $n$. $e_1$ does not share an edge with $n^*$. This means the clique number is one less if $n$ has an even-powered prime. Consider another $e_2$ that has an even $p_2$ whose power is less than half. Then $e_2$ does not share an edge with $p_1 n^*$. In general,  a vertex $e_i$ whose corresponding $p_i$ has an even power does not share an edge with distinct vertices $p_1 p_2 \cdots p_{i-1} n^*$. So the size of $C$ is reduced by the number of even powered-primes of $n$. This value can be represented by $m-k$ where $k$ is the number of odd-powered primes of $n$. Hence,  since $C$ is arbitrary,  $cl(\Gamma(\mathbb{Z}_n)) \leq \frac{n}{n^*} + m - (m - k) - 1$. $cl(\Gamma(\mathbb{Z}_n)) \leq \frac{n}{n^*} + k - 1$.
\end{proof}

\begin{theorem}\label{2.14} $cl(\Gamma(\mathbb{Z}_n)) = \frac{n}{n^*} + k - 1$.
\end{theorem}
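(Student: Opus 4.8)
The plan is to read off the result as an immediate sandwich of the two preceding theorems. Theorem~\ref{2.12} establishes the lower bound $cl(\Gamma(\mathbb{Z}_n)) \geq \frac{n}{n^*} + k - 1$ by exhibiting an explicit clique: the $\frac{n}{n^*}-1$ multiples of $n^*$ together with the $k$ vertices of the form $n^*/q$ for each odd-power prime $q$, all of which are shown to be pairwise adjacent. Theorem~\ref{2.13} establishes the matching upper bound $cl(\Gamma(\mathbb{Z}_n)) \leq \frac{n}{n^*} + k - 1$ by analyzing an arbitrary clique $C$: at most $\frac{n}{n^*}-1$ of its vertices can be multiples of $n^*$, at most $m$ (the number of distinct primes) can fail to be such multiples, and this latter count must be reduced by the number $m-k$ of even-power primes. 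Since both bounds have already been proven and they coincide, the first step is simply to invoke Theorem~\ref{2.12} for ``$\geq$'' and Theorem~\ref{2.13} for ``$\leq$'', and then conclude equality.

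Concretely, I would write: by Theorem~\ref{2.12} we have $cl(\Gamma(\mathbb{Z}_n)) \geq \frac{n}{n^*} + k - 1$, and by Theorem~\ref{2.13} we have $cl(\Gamma(\mathbb{Z}_n)) \leq \frac{n}{n^*} + k - 1$; combining these two inequalities forces $cl(\Gamma(\mathbb{Z}_n)) = \frac{n}{n^*} + k - 1$. There is no genuine obstacle here, since all the work has been done in the two bounding theorems; the only thing worth double-checking is that the quantity $k$ (number of odd-power primes in the factorization of $n$) is used with exactly the same meaning in both Theorem~\ref{2.12} and Theorem~\ref{2.13}, so that the two bounds are literally the same expression and the squeeze is valid. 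Once that consistency is confirmed, the equality is immediate.
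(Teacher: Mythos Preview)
Your proposal is correct and matches the paper's own proof exactly: the paper simply states that the result follows by Theorem~\ref{2.12} and Theorem~\ref{2.13}, which is precisely the squeeze you describe. Your additional remark about checking that $k$ has the same meaning in both theorems is a reasonable sanity check, but both statements explicitly define $k$ as the number of odd-power primes in the prime factorization of $n$, so there is nothing more to add.
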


\begin{proof}
The proof follows by Theorem \ref{2.12} and Theorem \ref{2.13}.
\end{proof}

\begin{theorem}\label{2.15} There are no non-empty,  non-complete,  regular $\Gamma(\mathbb{Z}_n)$.
\end{theorem}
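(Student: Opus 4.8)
The plan is to prove the statement directly by showing that the hypotheses ``non-empty'' and ``non-complete'' force the existence of two vertices with different degrees, so that regularity is impossible. Non-emptiness means $\mathbb{Z}_n$ has a nonzero zero-divisor, which rules out $n$ prime (a field) and $n=1$; hence $n$ is composite. Let $p$ be the smallest prime factor of $n$. By Theorem \ref{2.3} the non-completeness hypothesis gives $n \neq p^2$, and since $n$ is composite we always have $n \ge p^2$, so in fact $n > p^2$ and therefore $n/p > p$, i.e. $n/p \ge p+1$. The two vertices I will play against each other are $p$ and $n/p$, both of which are genuine nonzero zero-divisors of $\mathbb{Z}_n$.

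The heart of the argument is two short degree computations. For $p$, note that $pu \equiv 0 \pmod{n}$ holds iff $(n/p)\mid u$, so the neighbors of $p$ are exactly the multiples of $n/p$ in $\{1,\dots,n-1\}$, of which there are $p-1$; since $n/p > p$, the vertex $p$ is not itself such a multiple, and hence $\deg(p) = p-1$. For $n/p$, one has $(n/p)u \equiv 0 \pmod{n}$ iff $p \mid u$, so its neighbors are the multiples of $p$ in $\{1,\dots,n-1\}$, of which there are $n/p - 1$. The one subtlety is whether $n/p$ lies in its own neighbor set, which happens precisely when $p^2 \mid n$: thus $\deg(n/p) = n/p - 1$ when $p^2 \nmid n$, and $\deg(n/p) = n/p - 2$ when $p^2 \mid n$.

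It then remains to compare. If $p^2 \nmid n$, then $\deg(n/p) = n/p - 1 \ge p > p-1 = \deg(p)$. If $p^2 \mid n$, then $n/p \neq p+1$, because $n/p = p+1$ would give $n = p(p+1)$ with $p$ and $p+1$ coprime, forcing $p^2 \nmid n$; hence $n/p \ge p+2$ and $\deg(n/p) = n/p - 2 \ge p > p-1 = \deg(p)$. In both cases the degrees of $p$ and $n/p$ differ, so $\Gamma(\mathbb{Z}_n)$ cannot be regular, which is exactly what we wanted.

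The main (and essentially only) delicate point is the self-loop bookkeeping in the degree of $n/p$: one must remember that $n/p$ annihilates itself exactly when $p^2 \mid n$, dropping its neighbor count by one in that case. This off-by-one is what forces the split into the cases $p^2 \mid n$ and $p^2 \nmid n$, and the small number-theoretic observation that $n/p = p+1$ is incompatible with $p^2 \mid n$ is precisely what keeps the inequality strict in the borderline case.
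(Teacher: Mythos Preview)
Your proof is correct and takes a somewhat different route from the paper's. The paper splits into two cases according to the number of distinct prime factors of $n$: when $n=p^x$ (with $x\ge 3$) it compares the degrees of $p$ and $p^2$, and when $n$ has at least two distinct prime factors $p_1,p_2$ it compares the degrees of $p_1$ and $p_2$. You instead give a single uniform argument comparing $p$ (the smallest prime factor) with $n/p$, and your case split is on whether $p^2\mid n$, which only governs the self-loop correction in $\deg(n/p)$.

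What each approach buys: the paper's choice of witness pairs makes the degree computations almost trivial (in Case~2 both primes have degree $p_i-1$ with no self-loop issue), at the cost of a structural case split on the factorization of $n$. Your approach avoids that structural split and handles all $n$ at once, but you must track the self-annihilation of $n/p$ carefully---which you do, including the nice observation that $n/p=p+1$ is incompatible with $p^2\mid n$. Incidentally, your explicit self-loop bookkeeping is arguably more careful than the paper's Case~1, where the claimed count $\deg(p^2)=p^2-1$ overlooks that $p^2$ annihilates itself when $x\in\{3,4\}$ (though the conclusion there is unaffected).
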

\begin{proof}
Consider all $\Gamma(\mathbb{Z}_n)$ that are non-empty and not complete. Assume $\exists$ some regular graph among these graphs.\\
\begin{enumerate}
\item[Case 1:] $n=p^x$ where $p$ is prime\\
If $x=1$,  the graph is empty,  and if $x=2$,  the graph is complete,  so $x\geq 3$.
Then $p$ is a vertex that shares an edge with $p-1$ many other vertices,  and $p^2$ is a vertex that shares an edge with $p^2-1$ many other vertices. Since the graph is regular,  $p-1=p^2-1$,  thus $p=p^2$,  which means $p=1$,  a contradiction.\\
\item[Case 2:] $n=p_1^{\alpha_1}p_2^{\alpha_2}\cdots p_m^{\alpha_m}$,  $m \geq 2$ and $p_i$ are all prime\\
Vertex $p_1$ shares an edge with $p_1-1$ many other vertices,  and the vertex $p_2$ shares an edge with $p_2-1$ many other vertices. Since the graph is regular,  $p_1-1=p_2-1$,  thus $p_1=p_2$ which is a contradiction since $p_1$ and $p_2$ are distinct.\\
So the only non-empty regular graphs are complete.
\end{enumerate}
\end{proof}

\begin{theorem}\label{2.16} $\Gamma(\mathbb{Z}_n)$ is chordal iff $n=p^x,  2p$ or $2p^2$,  where $p$ is prime and $x$ is a positive integer.
\end{theorem}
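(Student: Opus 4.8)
The plan is to use the standard fact that a graph is chordal iff it has no induced cycle $C_\ell$ with $\ell\ge 4$, and to treat the two implications separately. Throughout I would work in type-class language: a vertex $a$ lies in $T_d$ with $d=\gcd(a,n)$, and by Theorems~\ref{1.2}--\ref{1.4} together with Lemma~\ref{2.0}, adjacency of two vertices depends only on their classes, so that vertices in $T_d,T_e$ are adjacent iff $de\equiv 0$ (and when $d=e$, iff $T_d$ has a self-loop, i.e. $d^2\equiv0$). Writing $n=p_1^{a_1}\cdots p_r^{a_r}$ and encoding a type by its valuation vector $(\nu_1,\dots,\nu_r)$, $\nu_i=\nu_{p_i}(a)$, this reads: $T_\nu\sim T_\mu$ iff $\nu_i+\mu_i\ge a_i$ for all $i$, and $T_\nu$ has a self-loop iff $2\nu_i\ge a_i$ for all $i$. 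The central tool for the ``only if'' direction is the remark that \emph{if $T_\nu\ne T_\mu$ are adjacent, both self-loop-free, and both of size $\ge 2$, then choosing $v_1,v_3\in T_\nu$ and $v_2,v_4\in T_\mu$ produces an induced $C_4$}: the four cross edges come from Theorem~\ref{1.2}, and the two diagonals are absent by Lemma~\ref{2.0}. By Lemmas~\ref{2.7} and \ref{2.8}, the only class that can fail the size condition is $T_{n/2}$.

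For the ``if'' direction I would verify each family directly. For $n=p^x$ a vertex is determined by a single valuation $\nu\in\{1,\dots,x-1\}$, with adjacency iff the valuations sum to at least $x$; in any cycle of length $\ge 4$ let $v$ have minimum valuation $i$, so its two cycle-neighbours $u,w$ satisfy $\nu(u),\nu(w)\ge x-i$ and hence $\nu(u)+\nu(w)\ge 2(x-i)\ge x$, forcing the chord $u\sim w$. Thus $\Gamma(\mathbb{Z}_{p^x})$ has no induced cycle of length $\ge 4$ (the case $x=2$ is also Theorem~\ref{2.1}). For $n=2p$ with $p$ odd the graph is complete bipartite between $T_2$ and the single vertex $T_p=\{p\}$, i.e. a star, hence chordal. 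For $n=2p^2$ with $p$ odd one checks the four classes $T_2,T_p,T_{p^2},T_{2p}$: the set $T_p\cup\{p^2\}$ is independent and completely joined to the clique $T_{2p}$ (a split graph, hence chordal), while each vertex of $T_2$ is pendant at $p^2$; attaching pendant (simplicial) vertices preserves chordality.

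For the ``only if'' direction I would assume $n$ has at least two distinct primes and $n\notin\{2p,2p^2\}$, and exhibit the pair required above. Call a type \emph{unitary} if each $\nu_i\in\{0,a_i\}$; these correspond to partitions $U=S\sqcup(U\setminus S)$ of the index set, are automatically self-loop-free, and $T_S$ is a singleton iff $U\setminus S$ consists of exactly the coordinate of a prime $2$ occurring to the first power. Hence when $2\nmid n$ or $4\mid n$ no unitary type is a singleton, and any nonempty proper $S$ (say $S=\{1\}$) gives the desired pair $T_S,T_{U\setminus S}$. When $2\mid n$ but $4\nmid n$ and $n$ has a third prime, I instead split off a single odd-prime coordinate $S=\{j\}$, keeping both sides non-singleton. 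The one remaining shape, $n=2p^a$ with $a\ge 3$, admits no unitary pair, so there I use the non-unitary pair $T_{p^{a-1}}=(0,a-1)$ and $T_{2p}=(1,1)$, which are distinct, adjacent, self-loop-free, and of size $\ge 2$, again yielding an induced $C_4$.

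The hard part will be the bookkeeping around the prime $2$ occurring to the first power, which is exactly what makes $2p$ and $2p^2$ exceptional: in that situation $T_{n/2}$ is the unique singleton (Lemma~\ref{2.7}) and its neighbourhood collapses to a star or to a clique, destroying every would-be $4$-cycle (a candidate $C_4$ in $2p^2$ is killed either by the singleton $T_{p^2}$ or, via Lemma~\ref{2.0}, by the self-loop on $T_{2p}$). On the negative side the delicate point is confirming that the moment one leaves these two shapes a self-loop-free non-singleton adjacent pair reappears, which forces the separate non-unitary construction precisely for $n=2p^a$ with $a\ge 3$. Since the forward direction rules out induced cycles of \emph{all} lengths $\ge 4$ (the minimum-valuation chord for $p^x$, and chordality of split-plus-pendant graphs for $2p$ and $2p^2$) and the reverse direction only needs a single induced $C_4$, no separate analysis of longer holes is required.
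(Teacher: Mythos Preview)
Your argument is correct and takes a genuinely different route from the paper's. The paper handles the ``only if'' direction by writing down an explicit chordless $4$-cycle in each of four case families (e.g.\ $p^x-q^y-2p^x-2q^y$ when both primes are odd, or $p_1^{\alpha_1}-n/p_1^{\alpha_1}-2p_1^{\alpha_1}-2n/p_1^{\alpha_1}$ when $n$ has three or more primes), and treats the ``if'' direction for $2p^2$ by an ad hoc contradiction on which type classes can appear in a putative chordless cycle. You instead isolate a single reusable mechanism: any pair of adjacent, self-loop-free type classes of size at least two already yields an induced $C_4$ via Theorems~\ref{1.2}--\ref{1.4} and Lemma~\ref{2.0}, and then the whole ``only if'' direction reduces to locating such a pair, with the bookkeeping around $2\Vert n$ governed by Lemma~\ref{2.7}. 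This is more structural and makes transparent \emph{why} $2p$ and $2p^2$ are the only exceptions with two primes (the singleton $T_{n/2}$ absorbs one side of every candidate pair). Your forward argument for $2p^2$ as ``split graph plus pendant simplicial vertices'' is likewise cleaner than the paper's case analysis. One small repair: in the $n=p^x$ case your inequality $2(x-i)\ge x$ is not automatic; you should also use minimality of $i$ to get $\nu(u),\nu(w)\ge i$, whence $\nu(u)+\nu(w)\ge 2\max(i,x-i)\ge x$, which then gives the chord $u\sim w$ in all cases.
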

\begin{proof}
Let $n=p^x$. Assume that $\Gamma(\mathbb{Z}_{p^x})$ is not chordal. Then $\exists$ a cycle $C$ of length $>$ 3,  that has no chord. Let $y$ be a vertex of $C$ that is not a multiple of $n^*$. Then,  since the power of $p$ in $y$ has a power strictly less than $\frac{x}{2}$,  each neighbor must be a multiple of $n^*$. Then the two neighbors of $y$ in $C$ share an edge which is a chord. So all vertices in $C$ must be a multiple of $n^*$ which also causes a chord. So $\Gamma(\mathbb{Z}_{p^x})$ is chordal.\\

Let $n=2p$. $\Gamma(\mathbb{Z}_{2p})$ is a star because it is a line segment only. Then,  $\Gamma(\mathbb{Z}_{2p})$ is chordal.\\

Let $n=2p^2$. Assume $\Gamma(\mathbb{Z}_{2p^2})$ is non-chordal. Then $\exists$ a cycle $C$ of length $>$ 3 that has no chord.\\
Let $a$ be a vertex of $C$ in the type class $T_p$. Each neighbor of $a$ must be a multiple of $2p$,  and therefore,  is in the type class $T_{2p}$. Each multiple of $2p$ shares an edge,  so there exists a chord in $C$. So there can be no vertices in the type class $T_p$ in $C$.\\
Let $b$ be a vertex of $C$ that is in the type class $T_2$. Every neighbor of $b$ must be in the type class $T_{p^2}$. But there is only one element in $T_{p^2}$ so $b$ cannot have two distinct neighbors. So $b$ is not a vertex of $C$.\\
So each vertex of $C$ must be in either $T_{p^2}$ or $T_{2p}$. Then since there is only one element of $T_{p^2}$,  and the magnitude of $C$ is at least 4,  there are at least 3 elements of $T_{2p}$ in $C$. Those 3 elements form a triangle since each multiple of $2p$ annihilates each other multiple of $2p$. But $C$ can't have a triangle since it is chord-less. This is a contradiction.\\
\\
Let $n$ not be $p^x$,  $2p$ or $2p^2$.\\
\begin{enumerate}
\item[Case 1:] $n=2^xp^y$ where $y\geq 3$,  $x \geq 1$ and $p$ is an odd prime.\\
Then $2^xp-p^y-2^{x+1}p-p^{y-1}$ is a chord-less cycle.\\
\item[Case 2:] $n=2^xp^y$ where $x\geq 2$,  $y \geq 1$ and $p$ is an odd prime.\\
Then $2p^y-2^x-p^y-2^{x+1}$ is a chord-less cycle.\\
\item[Case 3:] $n=p^xq^y$ where $p, q\geq 3$ where $p \neq q$ are primes and $x, y$ are non-zero.\\
Then $p^x-q^y-2p^x-2q^y$ is a chord-less cycle.\\
\item[Case 4:] $n=p_1^{\alpha_1}p_2^{\alpha_2}\cdots p_k^{\alpha_k}$ where $k\geq 3$ and $\alpha_i$ is non-zero.\\ Since $k\geq 3,  n$ has an odd prime factor $p_1$.
 Then $p_1^{\alpha_1}-n/p_1^{\alpha_1}-2p_1^{\alpha_1}-2n/p_1^{\alpha_1}$ is a chord-less cycle.\\
So $\Gamma(\mathbb{Z}_n)$ is non-chordal if $n$ is not $p^x$,  $2p$ or $2p^2$.
\end{enumerate}
\end{proof}

\begin{lemma}\label{2.17} If $n^* \neq n$,  $\Gamma(\mathbb{Z}_n)$ has a simplicial vertex.

\end{lemma}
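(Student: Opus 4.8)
The plan is to exhibit an explicit simplicial vertex. First I would unwind the hypothesis $n^* \neq n$: writing $n = p_1^{\alpha_1}\cdots p_m^{\alpha_m}$ and $n^* = p_1^{\beta_1}\cdots p_m^{\beta_m}$ with $\beta_i = \lceil \alpha_i/2 \rceil$, one has $\beta_i = \alpha_i$ exactly when $\alpha_i \leq 1$, so $n^* = n$ iff every exponent is at most $1$, i.e.\ iff $n$ is square-free. Hence $n^* \neq n$ guarantees a prime $p$ with $p^2 \mid n$. My candidate for a simplicial vertex is $v = p$ itself, which is a genuine non-zero zero-divisor since $\gcd(p,n) = p > 1$ and $p < p^2 \leq n$.

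Next I would compute the neighborhood $N(p)$. A vertex $w$ is adjacent to $p$ iff $pw \equiv 0 \pmod n$, i.e.\ iff $n \mid pw$; since $\gcd(p,n) = p$ this is equivalent to $(n/p) \mid w$. Thus $N(p)$ consists precisely of the non-zero zero-divisors that are multiples of $n/p$ (other than $p$ itself).

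The heart of the argument is then to show these neighbors are pairwise adjacent. Take two distinct neighbors $w = a(n/p)$ and $w' = b(n/p)$. Then $ww' = ab\,(n/p)^2 = ab\cdot n \cdot (n/p^2)$, and because $p^2 \mid n$ the quantity $n/p^2$ is a positive integer, so $ww'$ is a multiple of $n$ and hence $ww' \equiv 0 \pmod n$. Therefore $w$ and $w'$ share an edge, $N(p)$ is a clique, and $v = p$ is simplicial.

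The computations here are all routine; the one point that really carries the proof is the observation that squaring $n/p$ lands back in the ideal generated by $n$ exactly when $p^2 \mid n$ --- this is where the failure of square-freeness (equivalently $n^* \neq n$) is used, and it is the step I would be most careful to state cleanly. A minor degenerate case worth noting is when $N(p)$ is empty or a single vertex, but there the neighborhood is trivially a clique, so no separate treatment is needed. One could alternatively phrase the whole argument inside the type graph, taking the type class $T_p$ and invoking Theorem \ref{1.4} (neighbors depend only on the type class), but the direct divisibility computation above is shorter.
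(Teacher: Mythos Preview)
Your proof is correct, but it follows a different route from the paper's. The paper exhibits $n/n^*$ as the simplicial vertex: since every neighbor of $n/n^*$ must be a multiple of $n^*$, and (as shown in the proof of Theorem~\ref{2.12}) any two multiples of $n^*$ are adjacent, the neighborhood of $n/n^*$ is automatically a clique. You instead unpack $n^*\neq n$ as ``$n$ is not square-free,'' pick a prime $p$ with $p^2\mid n$, and show directly that the neighbors of $p$---the multiples of $n/p$---are pairwise adjacent via the identity $(n/p)^2 = n\cdot(n/p^2)$.

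Both arguments are short; the trade-off is that the paper's choice plugs into the $n^*$ machinery already built up (Lemma~\ref{2.10}, Theorem~\ref{2.12}), while yours is entirely self-contained and needs nothing beyond elementary divisibility. Your vertex $p$ is in fact a special case of the paper's phenomenon, since $n/p$ is always a multiple of $n^*$ when $p^2\mid n$, but your direct computation avoids having to invoke that. Either way the key point is the same: once the annihilator of the chosen vertex is a self-annihilating ideal, the neighborhood is a clique.
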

\begin{proof}
Let $n^* \neq n$. Then $n/n^*$ is a vertex since $n/n^*$ shares an edge with $n^*$ which is not a multiple of $n$. Since every neighbor of $n/n^*$ is a multiple of $n^*$ and every multiple of $n^*$ shares an edge,  $n/n^*$ is a simplicial vertex. So $\Gamma(\mathbb{Z}_n)$ has a simplicial vertex.
\end{proof}

Another construction $n_*$ can be useful. It is similar to $n^*$,  but for the odd powered primes,  round down instead of up. Consider $\Gamma(\mathbb{Z}_n)$ where $n=p_1^{\alpha_1}p_2^{\alpha_2}\cdots p_k^{\alpha_k}$. Define $n_*$ as $n_* = p_1^{\beta_1}p_2^{\beta_2}\cdots p_k^{\beta_k}$ and $\beta_i = \alpha_i/2$ if $\alpha_i$ is even and $\beta_i = (\alpha_i-1)/2$ if $\alpha_i$ is odd.\\
\\
Note that $n_*n^* = 0$ and if $n$ is square-free,  $n_* = 1$.

\begin{lemma}\label{2.18} Arbitrary vertex $v$ in $\Gamma(\mathbb{Z}_n)$ is a simplicial vertex iff $v\in T_2$ or $v\in T_g$ where $g$ is a factor of $n_*$.
\end{lemma}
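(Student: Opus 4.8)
The plan is to argue entirely at the level of type classes, exploiting the fact (Theorem~\ref{1.4}) that the adjacencies of a vertex $v$ depend only on its type class $T_d$, where $d=\gcd(v,n)$. Writing $n=p_1^{\alpha_1}\cdots p_m^{\alpha_m}$ and $d=p_1^{d_1}\cdots p_m^{d_m}$, I would first record the elementary dictionary that a vertex $u$ with $\gcd(u,n)=e$ is adjacent to $v$ iff $de\equiv 0\pmod n$; thus the neighbours of $v$ are exactly the vertices whose type is a multiple of the minimal annihilating type $\hat d:=n/d$. The single identity underlying everything is $n/n^{*}=n_{*}$, which yields the equivalence
\[
d\mid n_{*}\iff n^{*}\mid \hat d\iff\text{every neighbour of }v\text{ is a multiple of }n^{*}.
\]

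For the forward direction I would split into the two advertised families. If $d=g\mid n_{*}$, the displayed equivalence shows every neighbour of $v$ is a multiple of $n^{*}$; since the multiples of $n^{*}$ form a clique (established in the proof of Theorem~\ref{2.12}), the neighbourhood of $v$ is contained in a clique and hence is one, so $v$ is simplicial. If instead $v\in T_2$, a direct computation (a neighbour type $e=n/f$ must satisfy $f\mid 2$) shows that the only admissible neighbour type is $n/2$; by Lemma~\ref{2.7} the class $T_{n/2}$ consists of the single vertex $n/2$, so $v$ has exactly one neighbour and is trivially simplicial. This last step is exactly where the hypothesis $d=2$ earns its separate place: when $4\nmid n$ one has $2\nmid n_{*}$, yet $v\in T_2$ remains simplicial for the structural reason that its unique minimal neighbour type $n/2$ is the unique one-element class of Lemma~\ref{2.7}.

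For the converse I would prove the contrapositive: assuming $d\neq 2$ and $d\nmid n_{*}$, I exhibit two non-adjacent neighbours of $v$. Since $d\nmid n_{*}$, the equivalence above gives $n^{*}\nmid\hat d$, so by Lemma~\ref{2.10} the vertex $\hat d$ does not annihilate itself, i.e.\ $\hat d^{\,2}\not\equiv 0$; by Lemma~\ref{2.0} this means $T_{\hat d}$ carries no self-loop, so any two distinct elements of $T_{\hat d}$ are non-adjacent. It remains to produce two such elements that are genuine neighbours of $v$. First, $1<\hat d<n$, so $T_{\hat d}$ consists of actual (nonzero) zero-divisors, each adjacent to $v$ because $d\hat d=n\equiv 0$. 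Second, $T_{\hat d}$ has at least two elements, since $\hat d=n/d\neq n/2$ (as $d\neq 2$), again by Lemma~\ref{2.7}. Finally $v$ itself does not lie in $T_{\hat d}$, since $d=\hat d$ would force $d^{2}=n$, whence $d=n_{*}$, contradicting $d\nmid n_{*}$. Thus $v$ has two distinct non-adjacent neighbours and is not simplicial.

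The only real obstacle is the boundary behaviour captured by Lemma~\ref{2.7}: the whole dichotomy turns on the single identity $\hat d=n/2\iff d=2$, which simultaneously isolates $T_2$ as a genuine exception (forward direction) and guarantees $|T_{\hat d}|\ge 2$ in every other case where $d\nmid n_{*}$ (converse). I would therefore state and verify this identity cleanly at the outset, so that the appeals to Lemma~\ref{2.7} in the two directions are symmetric and the role of the exceptional class $T_2$ is transparent.
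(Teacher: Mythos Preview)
Your proof is correct and follows essentially the same route as the paper: the forward direction handles $T_2$ via Lemma~\ref{2.7} and $T_g$ with $g\mid n_*$ by showing all neighbours are multiples of $n^*$, while the converse exhibits two non-adjacent elements of $T_{n/d}$ using Lemma~\ref{2.7} together with the fact that $T_{n/d}$ carries no self-loop. Your write-up is somewhat tidier in making the identity $n_*n^{*}=n$ and the appeals to Lemmas~\ref{2.0} and~\ref{2.10} explicit, but the underlying argument is the same.
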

\begin{proof}
Take arbitrary $v$ in $\Gamma(\mathbb{Z}_n)$. Let $v\in T_2$. Then $v$ only shares an edge with vertices in $T_{n/2}$. By Lemma \ref{2.7},  $T_{n/2}$ has only one element,  which makes a clique. So $v$ is simplicial.\\
Let $v\in T_g$ where $g$ is a factor of $n_*$. So $n_* = ag$ where $a$ is a positive integer. Consider some vertex $h$ in $T_j$ that shares an edge with $v$. Then $j*g = bn$ for positive integer $b$. $\frac{jn_*}{a} = bn_*n^*$. $\frac{j}{a} = bn^*$. Then $j=abn^*$. So $j$ is a multiple of $n^*$ and therefore,  $h$ is a multiple of $n^*$. Since every multiple of $n^*$ shares an edge with every other such multiple,  $v$ is a simplicial vertex.\\
Conversely,  let $v$ be neither in $T_2$ nor in any $T_g$ where $g$ is a factor of $n_*$. Then,  since $v$ is not in any $T_g$,  $v$ has some prime with a power greater than half of that in $n$. Call that prime $p_x$ and its power in $v$,  $\alpha_x$. Let the type class of $v$ be called $T_w$. Consider the type class $T_{n/w}$. Each vertex in $T_{n/w}$ shares an edge with $v$. Since $v\notin T_2$,  $T_{n/w} \neq T_{n/2}$. So by Lemma \ref{2.7},  $T_{n/w}$ has more than one element. Since $n/w$ has a power of $p_x$ less than that of half in $n$,  none of the vertices in $T_{n/w}$ share an edge with each other. So the neighbors of $v$ do not form a clique. Hence,  $v$ is not simplicial. 
\end{proof}

\begin{theorem}\label{2.19} $\Gamma(\mathbb{Z}_n)$ has a simplicial vertex iff the prime factorization of $n$ is not square free or $n$ is even.
\end{theorem}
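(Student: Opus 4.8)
The plan is to deduce everything from Lemma~\ref{2.18}, which already pins down \emph{which} vertices are simplicial: a vertex $v$ of $\Gamma(\mathbb{Z}_n)$ is simplicial iff $v\in T_2$ or $v\in T_g$ for some factor $g$ of $n_*$. Thus $\Gamma(\mathbb{Z}_n)$ has a simplicial vertex iff at least one of the classes $T_2$, or $T_g$ with $g\mid n_*$, is a nonempty class of (nonzero) zero-divisors. The whole theorem is then the translation of this existence statement into the arithmetic condition ``$n$ is not square-free or $n$ is even.'' I would prove the two implications separately, and throughout assume $\Gamma(\mathbb{Z}_n)$ is nonempty (equivalently $n$ is composite), so that ``having a simplicial vertex'' is not vacuous.

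For the direction that the arithmetic condition forces a simplicial vertex, I would simply exhibit one in each case. If $n$ is even, then $\gcd(2,n)=2$, so $2$ is a nonzero zero-divisor lying in $T_2$; by Lemma~\ref{2.18} it is simplicial. If $n$ is not square-free, then some exponent in the factorization of $n$ is at least $2$, so $n_*\neq 1$; here I would note that $n_*$ is itself a proper nontrivial divisor of $n$ (one checks $1<n_*<n$), hence $n_*$ is a nonzero zero-divisor lying in $T_{n_*}$, and since $n_*\mid n_*$ Lemma~\ref{2.18} again gives that $n_*$ is simplicial. (Taking any prime $p\mid n_*$ together with the vertex $p\in T_p$ works equally well.)

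For the converse I would argue by contrapositive: suppose $n$ is square-free \emph{and} odd, and show no vertex can be simplicial via Lemma~\ref{2.18}. Since $n$ is odd, $\gcd(x,n)$ is odd for every $x$, so no vertex has type $2$ and $T_2=\emptyset$. Since $n$ is square-free, every exponent is $1$, whence $n_*=1$; the only divisor of $n_*$ is then $g=1$, and $T_1$ consists of units, not zero-divisors, so it contributes no vertex of $\Gamma(\mathbb{Z}_n)$. Lemma~\ref{2.18} therefore leaves no simplicial vertices, as required.

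The genuine mathematical content is entirely carried by Lemma~\ref{2.18}; what remains here is careful bookkeeping, and that is where the only real obstacle lies. Specifically, one must confirm that the classes we invoke, $T_2$ and $T_{n_*}$, are not merely vertices of the type graph but actually contain nonzero zero-divisors of $\mathbb{Z}_n$ (this uses $1<n_*<n$ and, for $T_2$, that $n>2$), and one must be honest about the degenerate case $n=2$: there $n$ is even yet $\Gamma(\mathbb{Z}_2)$ is empty, so the statement as phrased needs the standing hypothesis that $\Gamma(\mathbb{Z}_n)$ is nonempty. Once these edge cases are dispatched, the equivalence follows immediately from the characterization of simplicial vertices.
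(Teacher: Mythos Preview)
Your argument is correct and is a clean reduction to Lemma~\ref{2.18}. The paper, by contrast, does \emph{not} invoke Lemma~\ref{2.18} at all: for the ``not square-free'' direction it appeals to Lemma~\ref{2.17} (exhibiting $n/n^{*}$ as simplicial, rather than your $n_{*}$), for the ``even'' direction it argues directly that $2$ has a single neighbor $n/2$, and for the converse it gives an ad hoc construction---for each vertex $x$ it produces two explicit non-adjacent neighbors $n/x$ and $2n/x$. Your approach is more uniform and makes the theorem a near-immediate corollary of the full classification of simplicial vertices; the paper's approach is more elementary for the converse in that it avoids relying on that classification, but at the cost of a somewhat informal argument (strictly, $n/x$ need not be an integer unless one first passes to $\gcd(x,n)$). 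One small point of care in your write-up: in the converse, rather than saying ``$T_1$ consists of units,'' it is cleaner to note that the type classes $T_g$ are only defined for proper divisors $g\neq 1$, so when $n_*=1$ there is simply no admissible $g$ in Lemma~\ref{2.18}.
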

\begin{proof}
Let $n$ not be square free. Then,  $n^* \neq n$. So by Lemma \ref{2.17},  $\Gamma(\mathbb{Z}_n)$ has a simplicial vertex.\\
\\
Let $n$ be even. Then,  2 is a zero divisor. Every neighbor of 2 must be a multiple of $n/2$ which there is only one of,  so 2 is a simplicial vertex.\\
\\
Let $n$ be square free and odd. 2 is therefore not a factor of $n$. Then consider arbitrary vertex $x$. $x$ shares an edge with both $n/x$ and $2n/x$. $2n/x$ is non-zero since $x$ is necessarily odd,  and $n/x$ and $2n/x$ do not share an edge since $n$ is odd. For if $\frac{n}{x}\frac{2n}{x} = ny$,  $2n = yx^2$ and $n = \frac{yx^2}{2}$ which is a contradiction. So there are no simplicial vertices of $\Gamma(\mathbb{Z}_n)$.
\end{proof}

Note: It follows by ~\cite{jK55},  (observation 3.2),  if in $\Gamma(\mathbb{Z}_n)$ a vertex $u$ is simplicial then $T_u$ is simplicial in $\Gamma^{T}(\mathbb{Z}_n)$. But,  not conversely. For example,  in $\Gamma^{T}(\mathbb{Z}_{12}),  T_3$ is simplicial,  where as $3$ is not so in $\Gamma(\mathbb{Z}_{12})$.

\begin{lemma}\label{2.20} If $\Gamma(\mathbb{Z}_n)$ has three or more prime factors of $n$,  $\Gamma(\mathbb{Z}_n)$ is not $\gamma-\beta$ perfect.
\end{lemma}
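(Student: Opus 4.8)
The plan is to read ``$\gamma$--$\beta$ perfect'' as the requirement that the domination number $\gamma(\Gamma(\mathbb{Z}_n))$ equal the (vertex) independence number $\beta(\Gamma(\mathbb{Z}_n))$, so that to prove failure of perfectness I must separate these two invariants. I would do this by squeezing them apart: exhibit a dominating set of size at most the number $m$ of distinct prime divisors of $n$, together with an independent set of size strictly larger than $m$. Producing $\gamma(\Gamma(\mathbb{Z}_n)) \le m < \beta(\Gamma(\mathbb{Z}_n))$ immediately gives $\gamma \neq \beta$ and hence non-perfectness. Throughout write $n = p_1^{\alpha_1}\cdots p_m^{\alpha_m}$ with $m \ge 3$ and $p_1 < p_2 < \cdots < p_m$.

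For the upper bound on $\gamma$, I would take $D = \{\, n/p_1,\, n/p_2,\, \ldots,\, n/p_m \,\}$. Each $n/p_i$ is a genuine nonzero zero-divisor (its gcf with $n$ equals $n/p_i > 1$), and the $m$ entries are distinct. A vertex $v$ is adjacent to $n/p_i$ exactly when $n \mid (n/p_i)v$, i.e. exactly when $p_i \mid v$; hence $n/p_i$ dominates every multiple of $p_i$. Since every vertex of $\Gamma(\mathbb{Z}_n)$ shares a prime factor with $n$, it is a multiple of some $p_i$ and therefore dominated by $D$ (the vertices of $D$ themselves are multiples of the other primes, so they too are dominated, using $m \ge 2$). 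Thus $\gamma(\Gamma(\mathbb{Z}_n)) \le m$.

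For the lower bound on $\beta$, I would use a single type class. Let $p = p_1$ be the smallest prime divisor and consider $T_p = \{\,x : \gcd(x,n)=p\,\}$. Because $m \ge 3$, the integer $n$ has prime factors other than $p$, so $p^2 \not\equiv 0 \pmod n$; hence $T_p$ carries no self-loop in $\Gamma^S(\mathbb{Z}_n)$, and by Lemma \ref{2.0} the elements of $T_p$ are pairwise non-adjacent, i.e. $T_p$ is independent. The map $x \mapsto x/p$ identifies $T_p$ with the units of $\mathbb{Z}_{n/p}$, so $|T_p| = \phi(n/p) \ge \prod_{i=2}^{m}(p_i-1) \ge 2^{\,m-1}$, the last step using that each $p_i$ with $i \ge 2$ exceeds $p_1$ and hence satisfies $p_i - 1 \ge 2$. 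Since $2^{m-1} > m$ for every $m \ge 3$, I obtain $\beta(\Gamma(\mathbb{Z}_n)) \ge |T_p| > m \ge \gamma(\Gamma(\mathbb{Z}_n))$, which finishes the argument.

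The only genuinely delicate point is this final counting inequality: I must guarantee that the independent set beats $m$ for \emph{every} admissible $n$, including small and lopsided cases. Choosing $p$ to be the smallest prime is exactly what forces each remaining factor $p_i - 1$ to be at least $2$, yielding the uniform bound $\phi(n/p) \ge 2^{m-1} > m$; had I instead removed a larger prime, a leftover factor of $2$ could drive $\phi(n/p)$ down to $m$ and the separation would collapse. Everything else---that $D$ dominates and that $T_p$ is independent---is routine given Lemma \ref{2.0} and the divisibility description of adjacency.
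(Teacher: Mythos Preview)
Your argument is internally sound, but it is built on the wrong reading of ``$\gamma$--$\beta$ perfect''. In this paper $\beta$ is \emph{not} the independence number: in the proof of Theorem~\ref{2.21} the authors repeatedly compare the domination number to the ``smallest vertex map'', and they use edge-covering language (``the edge $p{-}u$ is not covered by $V$''; ``triangles prevent vertex maps of size~$1$''; ``the empty set is a vertex map since there are no edges''). So $\beta$ here means the minimum vertex cover number, and $\gamma$--$\beta$ perfect means $\gamma(\Gamma(\mathbb{Z}_n)) = \tau(\Gamma(\mathbb{Z}_n))$. A quick sanity check confirms this: for $n=2^3$ the graph is the path $2$--$4$--$6$, with $\gamma=1$, vertex cover $=1$, but independence number $=2$; the paper declares this case $\gamma$--$\beta$ perfect, which is incompatible with your interpretation. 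Likewise for $n=3p$ the graph is $K_{2,p-1}$, whose independence number is $p-1$ while $\gamma=2$, yet again the paper lists it as $\gamma$--$\beta$ perfect.

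Consequently what you actually prove, namely $\gamma \le m < \beta_0$ with $\beta_0$ the independence number, does not separate $\gamma$ from the vertex cover number; via Gallai's identity your inequality only says $\tau < |V|-\gamma$, which says nothing about whether $\tau=\gamma$. The paper's own proof takes a completely different tack: it assumes a vertex cover $V$ of size $k$ exists, argues that each $n/p_x$ must lie in $V$ (otherwise the $k$ edges from $n/p_x$ to the vertices $p_xp_i$ would force all of them into $V$, plus one more edge remains uncovered), and then uses Lemma~\ref{2.8} to find two further adjacent vertices in distinct type classes $T_{n/p_i}$ neither of which lies in $V$, forcing $|V|\ge k+1$. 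To fix your write-up you would need to replace the independent-set half of the argument with a lower bound on the vertex cover number exceeding $m$.
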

\begin{proof}
Let $n=p_1^{\alpha_1}p_2^{\alpha_2}\cdots p_k^{\alpha_k}$ where $k\geq 3$. By ~\cite{bH77},  the domination number is $k$. Construct some vertex map $V$ whose size is $k$.\\
We claim that $V$ must contain the vertex $n/p_x$ for every $p_x$ prime factor of $n$.\\
Consider the vertex $n/p_x$ for some $p_x$ prime factor of $n$. Let $n/p_x$ not be in $V$. Construct set $C = \{ p_xp_i | 1\leq i\leq k \}$. $n/p_x$ shares an edge with every vertex in $C$. Since $n/p_x \notin V$,  every element of $C$ is in $V$. $C$ has $k$ many vertices,  so $V$ has at least $k$ many vertices. Consider vertex $p_x$. $p_x$ shares an edge with $n/p_x$ which is not covered by $V$,  so $V$ has at least $k+1$ vertices. That is a contradiction since the size of $V$ is $k$. So each $n/p_x$ is in $V$.\\
\\
Consider the type classes $T_{n/p_1}$,  $T_{n/p_2}$ and $T_{n/p_3}$. By Lemma \ref{2.8},  there can be at most one type class with only one element. At least two of these type classes have more than one element. Without loss of generality,  let them be $T_{n/p_1}$ and $T_{n/p_2}$. Since $n/p_1$ and $n/p_2$ are both in $V$,  choose different vertices in the type classes $u$ and $v$. $u$ and $v$ share an edge since they are multiples of $n/p_1$ and $n/p_2$ respectively,  so they share an edge,  but are not in $V$. Then $V$ must contain at least one other element making the size of $V$ at least $k+1$. This is a contradiction. We cannot construct a vertex map size $k$. So $\Gamma(\mathbb{Z}_n)$ is not $\gamma-\beta$ perfect.
\end{proof}

\begin{theorem}\label{2.21} The only $\gamma-\beta$ perfect $\Gamma(\mathbb{Z}_n)$ are $n=2^3, 3^2, p, 2p$ and $3p$.
\end{theorem}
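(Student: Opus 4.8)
The plan is to read $\gamma$-$\beta$ perfectness as the equality $\gamma(\Gamma(\mathbb{Z}_n))=\beta(\Gamma(\mathbb{Z}_n))$ of the domination number $\gamma$ and the (vertex) covering number $\beta$, and to exploit the fact that whenever $\Gamma(\mathbb{Z}_n)$ is connected with at least one edge a vertex cover is automatically a dominating set, so that $\gamma\le\beta$ and perfectness amounts to the existence of a vertex cover of size $\gamma$ (the degenerate graphs $n=p$ and $n=4$ I would just check by hand). To disprove perfectness in a given case it then suffices to produce a matching of size exceeding $\gamma$, since the matching number is a lower bound for $\beta$. The first move is Lemma~\ref{2.20}, which eliminates every $n$ with three or more distinct prime factors; I would then treat $n=p^{a}$ and $n=p^{a}q^{b}$ separately, in each case reading off $\gamma$ and bounding $\beta$.

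For the prime-power case the domination number is $0$ for $a=1$ (the empty graph, giving the perfect family $n=p$) and $1$ for $a\ge2$ (the vertex $p^{a-1}$ is universal). When $a=2$, Theorem~\ref{2.1} gives $\Gamma(\mathbb{Z}_{p^2})=K_{p-1}$, whose covering number is $p-2$, so $\gamma=\beta$ forces $p=3$, i.e.\ $n=3^2$; the cases $p=2$ ($n=4$) and $p\ge5$ fail. When $a=3$ the graph is the independent class $T_p$ joined completely to the clique $T_{p^2}$ of size $p-1$, whence $\beta=p-1$ and $\gamma=\beta$ forces $p=2$, i.e.\ $n=2^3$. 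For $a\ge4$ I would exhibit two independent edges, say $\{p^{a-1},p\}$ together with an edge inside the clique $T_{p^{\lceil a/2\rceil}}$ (which has at least two vertices once $a\ge4$), giving $\beta\ge2>1=\gamma$. This isolates exactly $n\in\{p,2^3,3^2\}$ from the prime-power regime.

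For two primes I would first compute $\gamma$ directly: the pair $\{n/p,\,n/q\}$ dominates $\Gamma(\mathbb{Z}_n)$, so $\gamma\le2$, while a universal vertex would have to annihilate a second element of both $T_p$ and $T_q$, forcing divisibility by $\operatorname{lcm}(n/p,n/q)=n$, which is impossible unless one of these classes is a singleton; by Lemma~\ref{2.7} a pure-prime class is a singleton only when $n=2p$ or $n=2q$. Hence $\gamma=1$ exactly for $n=2q$ (a star, $\beta=1$, perfect) and $\gamma=2$ otherwise. In the squarefree case $n=pq$ with $p<q$ we have $\Gamma(\mathbb{Z}_{pq})=K_{p-1,q-1}$ and $\beta=\min(p-1,q-1)=p-1$, so $\gamma=\beta$ precisely when $p\in\{2,3\}$, yielding the families $n=2q$ and $n=3q$, while $p\ge5$ gives $\beta\ge4>2$. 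For the non-squarefree two-prime case the plan is to show $\beta\ge3>2=\gamma$: when both exponents are at least $2$, or the squared prime is at least $5$, the clique of multiples of $n^{*}$ from Theorem~\ref{2.12} already has at least four vertices and forces $\beta\ge3$ immediately.

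Assembling the regimes yields exactly $n\in\{p,\,2p,\,3p,\,2^3,\,3^2\}$. The genuine obstacle I anticipate is the handful of low-exponent non-squarefree corners where the $n^{*}$-clique is too small to do the work by itself, namely $n=2^2q$, $n=2^3q$, $n=9q$ and their mirror images $n=2q^2$, etc.; for these one must build an explicit matching of size three, in the spirit of the chordless four-cycles $p^{x}-q^{y}-2p^{x}-2q^{y}$ of Theorem~\ref{2.16}, augmenting two independent edges by a third disjoint one obtained by multiplying a vertex by a suitable unit, and for odd--odd $p,q$ one cannot use the factor $2$ and must argue within the divisor classes themselves. Care is also needed at the small boundary values ($p=2,3$ in the prime-power and squarefree analyses) to verify that the equalities, not merely the inequalities, hold.
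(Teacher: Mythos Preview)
Your strategy coincides with the paper's: both invoke Lemma~\ref{2.20} to dispose of $n$ with three or more prime factors, then split into the prime-power and two-prime regimes, verify the five listed families directly, and for every remaining $n$ produce an obstruction showing $\beta>\gamma$. The tactical difference is that you bound $\beta$ from below via matchings and the $n^*$-clique of Theorem~\ref{2.12}, whereas the paper writes down explicit small subgraphs (a triangle when $\gamma=1$, a six-hole for $n=pq$ with $p,q\ge5$, and a five-vertex configuration for $n=p^xq$ with $x\ge2$) and checks by hand that two vertices cannot cover them. Your clique/matching route is cleaner conceptually and handles the generic cases uniformly; the paper's ad hoc subgraphs have the advantage of dispatching the low-exponent corners ($4q$, $8q$, $9q$, etc.) in one stroke rather than leaving them as residual work.

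Two small remarks on the corners. First, if you use only the bare $n^*$-multiples clique of size $n/n^*-1$, the exceptional list is longer than you wrote (it also includes $16q$, $32q$, $27q$); using the full clique number $n/n^*+k-1$ from Theorem~\ref{2.14} trims it back to exactly your $4q$, $8q$, $9q$ and mirrors. Second, your worry that ``for odd--odd $p,q$ one cannot use the factor $2$'' is unfounded: when $n$ is odd, $2$ is a unit in $\mathbb{Z}_n$, so multiplying a vertex by $2$ gives another vertex in the same type class, and matchings like $\{3,3q\},\{9,q\},\{6,6q\}$ in $\mathbb{Z}_{9q}$ work without obstruction.
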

\begin{proof}
Let $n=2^3$. The domination number clearly equals the smallest vertex map.\\

\begin{tikzpicture}

\node (2) at (1, 1) {2};
\node (4) at (2, 2) {4};
\node (6) at (3, 1) {6};

\foreach \from/\to in {2/4, 4/6}
 \draw (\from) -- (\to);

\end{tikzpicture}\\

Let $n=3^2$. The domination number clearly equals the smallest vertex map.\\

\begin{tikzpicture}

\node (3) at (1, 1) {3};
\node (6) at (2, 1) {6};

\foreach \from/\to in {3/6}
 \draw (\from) -- (\to);

\end{tikzpicture}\\

Let $n=2p$. Then the graph is a star,  so the domination number and the smallest vertex map are both 1.\\
Let $n=3p$. Then $V = \{p,  2p\}$ is both a minimal dominating set and a minimal vertex map.\\
Let $n=p$. Then both the domination number and the smallest vertex map is 0 since the graph is empty.\\
\\
Now,  we will show that all other $\Gamma(\mathbb{Z}_n)$ are not $\gamma-\beta$ perfect.\\\\

Let $n=2^2$. The empty set is a vertex map since there are no edges in this map,  so smallest vertex map and the domination number are not the same.\\
Let $n=2^x$,  $x\geq 4$. Then $2^{x-1}-2^{x-2}-3\cdot2^{x-2}$ is a triangle. Triangles prevent vertex maps of size 1,  and by ~\cite{bH77} the domination number is 1,  so the values do not match.\\
Let $n=3^x$,  $x\geq 3$. Then $3^{x-1}-2\cdot3^{x-1}-3^{x-2}$ is a triangle that prevents vertex maps of size 1.\\
Let $n=p^x$,  $p\geq 5$,  $x\geq 2$. Then $p^{x-1}-2\cdot p^{x-1}-3\cdot p^{x-1}$ is a triangle.\\
Let $n=pq$,  $q>p\geq 5$. The domination number is 2 by ~\cite{bH77}. $p-q-2p-2q-3p-2q$ is a hole size 6. There cannot be a vertex map that covers a hole of that size,  so the smallest vertex map is not 2.\\
Let $n=p^xq$,  $x\geq 2$. The domination number is 2.\\
\begin{enumerate}
\item[Case 1:] $p=2$.\\
Then $p^{x-1}q-p^x-q-p^{x+1}-pq$ is a non-induced sub-graph that cannot be covered by a vertex map size 2.\\
\item[Case 2:] $p\neq 2$.\\
Then $p^x-p^{x-1}q-p-2p^{x-1}q-2p$ is a non-induced sub-graph that cannot be covered by a vertex map size 2.
The smallest vertex map is larger than 2 making the graph not $\gamma-\beta$ perfect.\\
Let $n=p^xq^y$,  $x, y \geq 2$. The domination number is 2 by ~\cite{bH77}. Assume there is a vertex map $V$ size 2. Consider the edges $p-p^{x-}q^y$ and $q-p^xq^{y-1}$. $V$ must contain at least vertex one of each edge. By Lemma \ref{2.8} only one type class can have only one vertex. Consider the type classes $T_{p^xq^{y-1}}$ and $T_{p^{x-1}q^y}$. At least one of them must contain more than one vertex. Without loss of generality let that be $T_{p^{x-1}q^y}$. Then there exists some $u\in T_{p^{x-1}q^y}$ that is not in $V$. The edge $p-u$ is not covered by $V$,  so the size of $V$ is at least one more than 2 which is a contradiction.\\
Let $n=p_1^{\alpha_1}p_2^{\alpha_2}\cdots p_k^{\alpha_k}$,  $k\geq 3$. Then by Lemma \ref{2.20},  the graph is not $\gamma-\beta$ perfect.\\
So the only $\gamma-\beta$ graphs $\Gamma(\mathbb{Z}_n)$ are $2^3,  3^2,  p,  2p$ and $3p$.
\end{enumerate}
\end{proof}

\section{Some properties of $\Gamma(\mathbb{Z}_{n_1}\times\cdots\times\mathbb{Z}_{n_k})$}

In this section,  we discuss some facts about $\Gamma(\mathbb{Z}_{n_1}\times\cdots\times\mathbb{Z}_{n_k})$. It is often possible to relate some properties of the individual $\Gamma(\mathbb{Z}_{n_i})$ to the graph of the product. One example is that the domination number of $\Gamma(\mathbb{Z}_{n_1}\times\cdots\times\mathbb{Z}_{n_k})$ has an upper and lower bound corresponding to the domination number of each $\Gamma(\mathbb{Z}_{n_i})$.

\begin{theorem}\label{3.0} Consider two arbitrary commutative rings with unity,  $R$ and $S$. $\Gamma(R\times S)$ is complete iff $|R|=|S|=2$.
\end{theorem}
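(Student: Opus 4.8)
The plan is to reduce everything to the coordinate-wise description of adjacency in a product ring: for distinct vertices $(a,b),(c,d)$ of $\Gamma(R\times S)$, adjacency means $(ac,bd)=(0,0)$, i.e. $ac=0$ in $R$ and $bd=0$ in $S$. I would treat the two implications separately. Throughout I would use the standing convention that a ring with unity is nontrivial, so $1\neq 0$ and hence $|R|,|S|\ge 2$; this is needed, since otherwise the statement fails (for instance the zero ring times $\mathbb{Z}_4$ has a complete zero-divisor graph), so flagging the convention is genuinely part of the argument.

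For the harder (``only if'') direction I would argue by contraposition and exhibit an explicit pair of non-adjacent vertices whenever $|R|\ge 3$. The key observation is that for every nonzero $r\in R$ the element $(r,0_S)$ is a \emph{nonzero} zero-divisor of $R\times S$, because $(r,0_S)(0_R,1_S)=(0,0)$ while $(0_R,1_S)\neq(0,0)$; thus each such $(r,0_S)$ is a genuine vertex. If $|R|\ge 3$, then there is some $r\in R$ with $r\neq 0$ and $r\neq 1_R$, and the two distinct vertices $(1_R,0_S)$ and $(r,0_S)$ have product $(1_R r,\,0)=(r,0)\neq(0,0)$, so they are non-adjacent and $\Gamma(R\times S)$ is not complete. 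Hence completeness forces $|R|\le 2$, and by the symmetric argument in the second coordinate $|S|\le 2$; combined with $|R|,|S|\ge 2$ this yields $|R|=|S|=2$.

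For the (``if'') direction I would simply compute the graph when $|R|=|S|=2$. The only ring with unity of order two is $\mathbb{Z}_2$, so we may take $R\times S=\{(0,0),(1,0),(0,1),(1,1)\}$; here $(1,1)$ is the multiplicative identity and so is not a zero-divisor, and $(0,0)$ is excluded, leaving exactly the vertices $(1,0)$ and $(0,1)$, which are adjacent since $(1,0)(0,1)=(0,0)$. Therefore $\Gamma(R\times S)\cong K_2$, which is complete.

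I do not expect a deep obstacle here; the only care required is the bookkeeping of which elements are actually vertices (nonzero zero-divisors, as opposed to units or $0$) and the appeal to $1\neq 0$ to discard the degenerate trivial-ring cases. The one genuinely load-bearing idea is the choice of witnesses $(1_R,0_S)$ and $(r,0_S)$: using the unit in the first coordinate guarantees their product is $(r,0)\neq 0$, which is exactly what breaks completeness.
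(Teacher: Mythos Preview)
Your proof is correct and follows essentially the same approach as the paper: both use the witnesses $(1_R,0_S)$ and $(a,0_S)$ with $a\notin\{0,1\}$ to break completeness when $|R|\ge 3$, and both compute the graph directly when $|R|=|S|=2$. Your version is slightly more careful in verifying that these witnesses are genuine vertices and in flagging the $1\neq 0$ convention, but the underlying argument is the same.
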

\begin{proof}
Consider some $R$ and $S$ such that $|R|=|S|=2$. Since both $R$ and $S$ have $1$,  the only elements of $R$ and $S$ are $0$ and $1$,  where by $1$ we denote the unity of the respective ring . Then the zero divisor graph is $(0, 1) - (1, 0)$ which is complete.\\
Conversely,  let $R$ or $S$ have more than 2 elements. Without loss of generality,  let $R$ have more than 2 elements. Then $R$ has some element $a$ that is neither $1$ nor $0$. The graph $\Gamma(R\times S)$ has vertices $(1,  0)$ and $(a,  0)$. These vertices do not share an edge because $1\cdot a = a$ which is not zero. So $\Gamma(R \times S)$ is not complete.\\
\end{proof}

\begin{theorem}\label{3.1} $\Gamma(R_1\times \cdots\times R_k)$ where $k \geq 2$ and each $R_i$ is a commutative ring with $1$. This graph is complete iff $k=2$ and $|R_i| = 2$ for all $i$.
\end{theorem}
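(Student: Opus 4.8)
The plan is to reduce the general statement to the two-factor case already settled in Theorem \ref{3.0}. The key move is a regrouping: I would write $R_1 \times \cdots \times R_k = R_1 \times S$, where $S = R_2 \times \cdots \times R_k$ is again a commutative ring with unity (a finite product of such rings). This lets me apply Theorem \ref{3.0} verbatim to the pair $R_1, S$, obtaining that $\Gamma(R_1 \times S)$ is complete iff $|R_1| = |S| = 2$.

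For the forward direction, I would assume the graph is complete, so that $|R_1| = 2$ and $|S| = 2$. Since each $R_i$ is a ring with unity $1 \neq 0$, every factor has at least two elements, whence $|S| = \prod_{i=2}^{k} |R_i| \geq 2^{k-1}$. The equality $|S| = 2$ then forces $k - 1 = 1$, i.e.\ $k = 2$, and consequently $|R_2| = |S| = 2$. Together with $|R_1| = 2$ this gives exactly $k = 2$ and $|R_i| = 2$ for all $i$. For the converse, if $k = 2$ and $|R_1| = |R_2| = 2$, then $S = R_2$ has two elements and Theorem \ref{3.0} immediately yields that $\Gamma(R_1 \times R_2)$ is complete.

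The main point to watch is the standing nontriviality assumption ($1 \neq 0$ in each $R_i$), which is what guarantees $|R_i| \geq 2$ and hence the bound $|S| \geq 2^{k-1}$ that powers the forward implication; a degenerate zero-ring factor would break this counting. As an independent sanity check, and an alternative to the grouping argument, one can exhibit a concrete obstruction whenever $k \geq 3$: the elements $(1, 0, 0, \ldots, 0)$ and $(1, 1, 0, \ldots, 0)$ are both nonzero zero-divisors (each is annihilated by $(0, 0, 1, 0, \ldots, 0)$, which exists precisely because $k \geq 3$), yet their product is $(1, 0, \ldots, 0) \neq 0$, so they are nonadjacent and the graph is not complete. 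I expect no serious obstacle here; the only care needed is verifying that $S$ genuinely inherits the structure of a commutative ring with unity so that Theorem \ref{3.0} applies, and that the size bound is handled cleanly.
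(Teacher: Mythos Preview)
Your argument is correct. The grouping $R_1\times\cdots\times R_k\cong R_1\times S$ with $S=R_2\times\cdots\times R_k$ is legitimate (the finite product of commutative rings with unity is again such a ring, with componentwise operations and unity $(1,\ldots,1)$), so Theorem~\ref{3.0} applies and the size bound $|S|\geq 2^{k-1}$ finishes the forward direction cleanly. Your explicit nonadjacent pair for $k\geq 3$ is also valid.

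The paper proceeds differently: rather than regrouping and invoking Theorem~\ref{3.0} for the full product, it handles the failure cases by exhibiting explicit nonadjacent zero-divisors directly---$(1,0,1,\ldots)$ and $(1,1,0,\ldots)$ when $k\geq 3$, and $(\ldots,a,\ldots)$ versus $(\ldots,1,\ldots)$ (with zeros elsewhere) when some $|R_i|>2$. Your reduction is more structural and makes the dependence on Theorem~\ref{3.0} do all the work, while the paper's approach is a bare-hands case split; your version has the advantage that the cardinality inequality $2=|S|\geq 2^{k-1}$ simultaneously forces both $k=2$ and $|R_2|=2$, whereas the paper treats these as separate cases. Your caveat about needing $1\neq 0$ in each factor is well taken and is implicitly assumed in the paper as well.
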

\begin{proof}
Consider some $\Gamma(R_1\times\cdots\times R_k)$ where $k=2$ and all $|R_i| = 2$. Then by Theorem 3.0,  $\Gamma(R_1\times\cdots\times R_k)$ is complete.\\
Consider some $\Gamma(R_1\times \cdots\times R_k)$ that does not meet this criteria. If $k \geq 3$,  then $(1,  0,  1)$ and $(1,  1,  0)$ are two vertices that do not share an edge. If any $|R_i| \geq 2$,  then $R_i$ has an element $a$ that is not 0 or 1. Then $(\cdots, a, \cdots)$ does not share an edge with $(\cdots, 1, \cdots)$,  where $a$ and $1$ are placed in the $i-th$ entry of the respective elements. So $\Gamma(R_1\times \cdots\times R_k)$ is not complete.
\end{proof}

\begin{theorem}\label{3.2} $\Gamma(\mathbb{Z}_n\times\mathbb{Z}_m)$ where $n, m \geq 2$ is complete-bipartite iff $n$ and $m$ are prime.
\begin{proof}
Let $m$ and $n$ be prime. Then partition $\Gamma(\mathbb{Z}_n\times\mathbb{Z}_m)$ into sets $S_n$ and $S_m$ such that
$S_n=\{(x, 0)|0<x<n\}$ and $S_m=\{(0, y)|0<y<m\}$.\\
We claim that $S_n\cup S_m = \Gamma(\mathbb{Z}_n\times\mathbb{Z}_m)$.\\
Assume,  $\exists$ a zero divisor $a=(a_1, a_2)$ that is not in $S_n\cup S_m$. Both $a_1$ and $a_2$ are non-zero as $m$ and $n$ are prime. Since $a$ is a zero-divisor,  there must be some $b = (b_1, b_2)$ that shares an edge with $a$. So $a_1 b_1 = 0$. Since $\mathbb{Z}_n$ has no non-zero divisors,  and $a_1$ is not zero,  $b_1 = 0$. In the same way we find that $b_2$ is zero. This means $a$ is not a zero-divisor because it only shares an edge with 0. So $S_n\cup S_m = \Gamma(\mathbb{Z}_n\times\mathbb{Z}_m)$.\\
Take arbitrary $u, v\in S_n$. Then $u=(u_1, 0)$ and $v=(v_1, 0)$. Since $u_1 v_1 \neq 0$,  $uv\neq (0, 0)$ which means $u$ and $v$ do not share an edge. In the same way $u$ and $v$ do not share an edge if they are both in $S_m$. So $u$ and $v$ do not share an edge if they are in the same partition which is the definition of bipartite.\\
Thus,  it follows from the construction of $S_m$ and $S_m$,  that $\Gamma(\mathbb{Z}_n\times\mathbb{Z}_m)$ is complete bipartite.\\
Conversely,  let $\Gamma(\mathbb{Z}_n\times\mathbb{Z}_m)$ be complete bipartite. Assume one or both n and m are not prime. Let the non-prime be $n$. Then,  there is a non-zero zero divisor of $\mathbb{Z}_n$. Call it $k$. Since $\Gamma(\mathbb{Z}_n\times\mathbb{Z}_m)$ is complete-bipartite,  the vertices of $\Gamma(\mathbb{Z}_n\times\mathbb{Z}_m)$ can be partitioned into 2 disjoint subsets such that no edges exist between two vertices in the same partition,  and every pair of vertices in different partitions share an edge. $(1, 0)$ is a zero divisor since it shares an edge with $(0, 1)$. $(k, 0)$ is also a zero divisor since it also shares an edge with $(0, 1)$. Since $(k, 0)$ does not share an edge with $(1, 0)$,  they must be in the same partition. Call it $S_1$ and let the other partition be $S_2$. Since $k$ is a zero-divisor of $\mathbb{Z}_n$,  $\exists k'$ not necessarily distinct such that $k\cdot k' = 0$. Then $(k',  1)$ shares an edge with $(k,  0)$ which means $(k',  1)\in S_2$. Since $\Gamma(\mathbb{Z}_n\times\mathbb{Z}_m)$ is complete-bipartite,  $(1, 0)$ must share an edge with $(k', 1)$ since they are in opposite partitions,  but their product is not $0$,  which is a contradiction. So both $n$ and $m$ must be prime.
\end{proof}
\end{theorem}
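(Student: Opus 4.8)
The statement is a biconditional, so the plan is to prove the two implications separately. For the forward direction I would use that when $n$ and $m$ are prime, $\mathbb{Z}_n$ and $\mathbb{Z}_m$ are fields, so a nonzero element $(a,b)$ of $\mathbb{Z}_n\times\mathbb{Z}_m$ is a zero-divisor exactly when one coordinate is zero and the other is not. This immediately dictates the bipartition
\[
S_n = \{(x,0) : 0 < x < n\}, \qquad S_m = \{(0,y) : 0 < y < m\},
\]
which together exhaust $ZD(\mathbb{Z}_n\times\mathbb{Z}_m)$. The verification then reduces to two short checks: within a single part, vertices $(u_1,0)$ and $(v_1,0)$ multiply to $(u_1v_1,0)$, which is nonzero because $\mathbb{Z}_n$ has no zero-divisors, so there is no edge; across the parts, $(x,0)\cdot(0,y)=(0,0)$, so every cross pair is adjacent. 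This is exactly complete bipartiteness.

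For the converse I would argue by contradiction, assuming $\Gamma(\mathbb{Z}_n\times\mathbb{Z}_m)$ is complete bipartite while, without loss of generality, $n$ is not prime. Then $\mathbb{Z}_n$ carries a nonzero zero-divisor $k$ with a (not necessarily distinct) partner $k'$ satisfying $kk'=0$ in $\mathbb{Z}_n$. The heart of the argument is to exhibit three vertices whose adjacency pattern is impossible in a complete bipartite graph. The natural witnesses are $(1,0)$, $(k,0)$, and $(k',1)$: the first two are non-adjacent, since their product $(k,0)$ is nonzero, so they must lie in a common part, while $(k',1)$ is adjacent to $(k,0)$ and hence lies in the opposite part. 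Complete bipartiteness would then force $(1,0)$ and $(k',1)$ to be adjacent, yet their product $(k',0)$ is nonzero because $k'\neq 0$, giving the contradiction.

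The hard part will be the converse, and specifically the choice of the three witnessing vertices so that the adjacency forced across the bipartition collides with a genuine non-edge. The trick is to pair the unit coordinate $1$ with the zero-divisor data $k,k'$, so that the annihilation $kk'=0$ drops $(k',1)$ onto the far side of the bipartition while the non-annihilation $1\cdot k'=k'\neq 0$ blocks the edge that completeness demands. Along the way I would confirm that each of the three elements is a bona fide nonzero zero-divisor of $\mathbb{Z}_n\times\mathbb{Z}_m$ — each is adjacent to some element with a single nonzero coordinate (for instance $(1,0)$ and $(k,0)$ to $(0,1)$, and $(k',1)$ to $(k,0)$) — so that the entire argument stays inside the vertex set of the graph.
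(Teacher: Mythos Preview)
Your proposal is correct and follows essentially the same approach as the paper: the same bipartition $S_n,S_m$ in the forward direction, and the same three witness vertices $(1,0)$, $(k,0)$, $(k',1)$ in the converse, with the contradiction arising from the non-edge between $(1,0)$ and $(k',1)$. Your write-up is in fact slightly more careful, since you explicitly verify that each witness is a vertex and that every cross pair in the forward direction is adjacent, points the paper leaves implicit.
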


\begin{corollary} From this theorem it follows that $\Gamma(\mathbb{Z}_n\times\mathbb{Z}_m)$ has a complete bipartite sub-graph. 
\end{corollary}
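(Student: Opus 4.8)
The plan is to reuse the very bipartition that drives the proof of Theorem \ref{3.2}, observing that its construction produces a complete bipartite subgraph for every $n,m \geq 2$, and not only in the prime case where that subgraph happens to exhaust the whole graph. Concretely, I would set $S_n = \{(x,0) \mid 0 < x < n\}$ and $S_m = \{(0,y) \mid 0 < y < m\}$, exactly the two sets appearing in the proof of Theorem \ref{3.2}.

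First I would check that each element of $S_n$ and of $S_m$ is genuinely a vertex of $\Gamma(\mathbb{Z}_n\times\mathbb{Z}_m)$, that is, a nonzero zero-divisor. For $(x,0) \in S_n$ this is immediate: it is nonzero since $x \neq 0$, and it is a zero-divisor since $(x,0)\cdot(0,1) = (0,0)$. The symmetric argument, using $(1,0)$, handles each element of $S_m$.

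Next I would verify that the crossing edges are all present. For arbitrary $(x,0) \in S_n$ and $(0,y) \in S_m$ we have $(x,0)\cdot(0,y) = (0,0)$, so these two vertices are adjacent. Hence every vertex of $S_n$ is joined to every vertex of $S_m$, and the subgraph on vertex set $S_n \cup S_m$ equipped with precisely these crossing edges is a complete bipartite graph $K_{n-1,\,m-1}$.

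There is no serious obstacle here; the only point requiring care is that this is a subgraph that need not be induced. We keep only the crossing edges and deliberately discard any edges internal to $S_n$ or to $S_m$, which can occur once $n$ or $m$ is composite. When $n$ and $m$ are both prime, no such internal edges exist, so the subgraph coincides with the whole graph and we recover Theorem \ref{3.2}; for general $n,m \geq 2$ the same construction still furnishes the desired complete bipartite subgraph, which is exactly the assertion of the corollary.
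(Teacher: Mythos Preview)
Your argument is correct. You use the same bipartition $S_n \cup S_m$ that the paper invokes, and you verify carefully that all crossing edges are present, which yields $K_{n-1,m-1}$ as a (not necessarily induced) subgraph.

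The paper takes a slightly different tack after forming $S_n \cup S_m$: rather than keeping all vertices and discarding internal edges, it deletes those vertices whose nonzero coordinate is a proper divisor of $n$ (respectively $m$), i.e.\ it keeps only the pairs $(x,0)$ and $(0,y)$ with $x$ a unit in $\mathbb{Z}_n$ and $y$ a unit in $\mathbb{Z}_m$. This produces an \emph{induced} complete bipartite subgraph, at the cost of shrinking the parts to sizes $\phi(n)$ and $\phi(m)$. Your route is cleaner and gives a larger complete bipartite subgraph, but only as a non-induced subgraph; the paper's route gives a smaller one that is induced. Since the corollary only asks for a subgraph, either approach suffices, and you were explicit about the non-induced point, so there is no gap.
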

This is formed by $S_n \cup S_m$. If one of them is not a prime,  we can delete all vertices that has at least one  an entry dividing either $n$ or $m$ respectively,  to get a complete bipartite subgraph.

\begin{theorem}\label{3.3} $\Gamma(\mathbb{Z}_{n_1}\times\cdots\times\mathbb{Z}_{n_k})$ where $\forall n_i \geq 2$ and $k\geq 2$ is bipartite iff $k=2$ and both $n_i$ are prime,  or one $n_x$ is prime and the other is $4$.
\end{theorem}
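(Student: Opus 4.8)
The plan is to prove both directions constructively, exhibiting explicit bipartitions in the two ``good'' families and explicit triangles (which, being odd cycles, obstruct bipartiteness) in all other cases; this keeps the argument elementary and avoids any appeal to girth theory. \emph{Sufficiency.} If $k=2$ and $n,m$ are both prime, then $\Gamma(\mathbb{Z}_n\times\mathbb{Z}_m)$ is complete bipartite by Theorem \ref{3.2}, hence bipartite. For the family $n=4$, $m=p$ prime I would $2$-color $\Gamma(\mathbb{Z}_4\times\mathbb{Z}_p)$ according to whether the \emph{second} coordinate is zero: let $X=\{(1,0),(2,0),(3,0)\}$ be the zero-divisors with second coordinate $0$, and let $Y$ be the zero-divisors with nonzero second coordinate. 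No two vertices of $Y$ can be adjacent, since an edge forces the product of the two second coordinates to vanish mod $p$ while both are nonzero; and no two distinct vertices of $X$ are adjacent because in $\mathbb{Z}_4$ the only pair from $\{1,2,3\}$ with product $0$ is $2\cdot 2$, which is a loop rather than an edge. This is exactly where $n=4$ is used: $\mathbb{Z}_4$ has a single nonzero zero-divisor, so the first slot contributes no edge.

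\emph{Necessity.} I would argue the contrapositive, producing a triangle in every case not of the stated form. If $k\ge 3$, the three vectors $(1,0,0,\dots,0)$, $(0,1,0,\dots,0)$, $(0,0,1,0,\dots,0)$ are distinct nonzero zero-divisors whose supports are disjoint, so they annihilate one another pairwise and form a triangle; this forces $k=2$. The crucial observation for $k=2$ is that one edge inside a single factor already produces a triangle in the product: if $x\ne y$ are nonzero in $\mathbb{Z}_n$ with $xy\equiv 0\pmod n$, then $(x,0),(y,0),(0,1)$ is a triangle in $\Gamma(\mathbb{Z}_n\times\mathbb{Z}_m)$, because $(0,1)$ is a zero-divisor annihilated by both $(x,0)$ and $(y,0)$. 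Hence it suffices to show that $\Gamma(\mathbb{Z}_n)$ contains an edge whenever $n$ is composite and $n\ne 4$.

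\emph{Edge lemma and cleanup.} To finish I would verify that a composite $n\ne 4$ always admits distinct nonzero $x,y$ with $xy\equiv 0\pmod n$: writing $n=ab$ with $1<a\le b<n$, take $(x,y)=(a,b)$ when $a\ne b$, and when $n=a^2$ take $(x,y)=(a,2a)$, which are distinct with $2a^2\equiv 0$ as soon as $a\ge 3$. The only composite this misses is $a=2$, i.e.\ $n=4$, whose sole zero-divisor $2$ yields no edge. Consequently, for $k=2$: if either factor is composite and $\ne 4$ the product contains a triangle; otherwise both factors lie in $\{4\}\cup\{\text{primes}\}$, and the only such pair outside the two good families is $(4,4)$, for which $(2,0),(0,2),(2,2)$ is a triangle. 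Thus the non-bipartite cases are precisely the complement of the stated families.

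\emph{Main obstacle.} The delicate point is that bipartiteness of the product does \emph{not} reduce to bipartiteness of the factors: $\Gamma(\mathbb{Z}_8)$ and $\Gamma(\mathbb{Z}_{12})$ are themselves bipartite, yet $\Gamma(\mathbb{Z}_8\times\mathbb{Z}_m)$ and $\Gamma(\mathbb{Z}_{12}\times\mathbb{Z}_m)$ are not. The correct invariant is the mere \emph{presence of an edge} in a factor rather than its bipartiteness, and the heart of the argument is recognizing $n=4$ as the unique composite whose zero-divisor graph is edgeless, then disposing of the residual $(4,4)$ case. Making the $(4,p)$ bipartition key precisely on this edgelessness of $\mathbb{Z}_4$ is the other place where care is required.
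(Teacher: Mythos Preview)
Your proof is correct and follows essentially the same route as the paper: the same bipartition for the $(4,p)$ case (split on whether the prime-indexed coordinate vanishes), the same triangle $(1,0,\dots),(0,1,0,\dots),(0,0,1,\dots)$ to force $k=2$, the same triangle $(x,0),(y,0),(0,1)$ whenever a factor has an internal edge, and the same triangle $(2,0),(0,2),(2,2)$ to eliminate $(4,4)$. Your explicit ``edge lemma'' singling out $n=4$ as the unique composite with edgeless $\Gamma(\mathbb{Z}_n)$ is slightly more careful than the paper's phrasing, but the argument is otherwise identical.
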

\begin{proof}
Let $k=2$ and both $n_1$ and $n_2$ be prime. By Theorem \ref{3.2},  $\Gamma(\mathbb{Z}_{n_1}\times \mathbb{Z}_{n_2})$ is bipartite.\\
Let $k=2$ and let one of $n_i$ be 4 and the other be prime. Without loss of generality,  let $n_1=4$. Then $n_2$ is prime. Partition the vertices into set $A$ and $B$ where $A$ is the set of all vertices of the form $(a,  0)$ where $a\in\mathbb{Z}_{n_1}/0$ and $B$ is everything else. Consider arbitrary,  distinct elements of $A$,  $(a_1,  0)$ and $(a_2,  0)$. They do not share an edge,  since there are no two distinct $a_1$ and $a_2$ that share an edge in $\Gamma(\mathbb{Z}_{n_1})$. Consider all vertices in $B$. Assume $\exists u, v \in B$ such that $u$ shares an edge with $v$. Then,  $u = (u_1, u_2)$ and $v = (v_1, v_2)$. Note that $u_2v_2 \neq 0$. $u_2 v_2 = 0$ which means $u_2$ and $v_2$ are zero divisors in $\Gamma(\mathbb{Z}_{n_2})$. This is impossible since there are no zero divisors in $\Gamma(\mathbb{Z}_{n_2})$. So $\Gamma(\mathbb{Z}_{n_1}\times\cdots\times\mathbb{Z}_{n_k})$ is bipartite.\\
Conversely,  let $\Gamma(\mathbb{Z}_{n_1}\times\cdots\times\mathbb{Z}_{n_k})$ be bipartite.\\
We first claim that $k=2$.\\
Assume $k\geq 3$. Then,  $(1,  0,  0, \cdots,  0) - (0,  1,  0, \cdots,  0) - (0,  0,  1, \cdots,  0)$ is a triangle which cannot exist in a bipartite graph. So $k < 3$. By our definition,  $k\geq 2$,  so $k=2$\\
We now claim no $\Gamma(\mathbb{Z}_{n_i})$ can have two or more distinct zero divisors.\\
Assume otherwise. Call two such divisors $u$ and $v$ that share an edge in $\Gamma(\mathbb{Z}_{n_i})$. Without loss of generality,  let $u$ and $v$ be in the first slot (so $i=1$). Then $(u,  0) - (v,  0) - (0,  1)$ is a triangle which cannot exist in a bipartite graph. The only $\Gamma(\mathbb{Z}_{n_i})$ that has one element is $\Gamma(\mathbb{Z}_4)$. So all $n_i$ must be either 4 or prime.\\
Our final claim is it is not possible for both $n_i$ to be 4.\\
Assume otherwise. Then $(2,  0) - (2,  2) - (0,  2)$ is a triangle which cannot exist in a bipartite graph.
So,  because $\Gamma(\mathbb{Z}_{n_1}\times\mathbb{Z}_{n_2}\times\cdots \times\mathbb{Z}_{n_k})$ is bipartite,  $k=2$ and either both $n_i$ are prime,  or one is 4 and the other is prime.
\end{proof}

\begin{theorem}\label{3.4} $\Gamma(R_1\times\cdots\times R_k)$ where each $R_i$ is a commutative ring with 1 is not perfect if some $\Gamma(R_i)$ is not perfect.
\end{theorem}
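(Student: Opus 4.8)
The plan is to realize $\Gamma(R_i)$ as an induced subgraph of $\Gamma(R_1\times\cdots\times R_k)$ and then invoke the hereditary nature of perfectness. Since the statement concerns arbitrary commutative rings with unity, the type-graph machinery of Section~2 does not apply here, so a direct coordinate embedding is the natural route. Throughout I assume the standing hypothesis $k\geq 2$ (for $k=1$ the claim is vacuous).

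First I would fix an index $i$ for which $\Gamma(R_i)$ is not perfect and define a map $\phi$ from the vertices of $\Gamma(R_i)$ into $R_1\times\cdots\times R_k$ by sending a nonzero zero-divisor $x$ of $R_i$ to the tuple $\phi(x)$ having $x$ in the $i$-th coordinate and $0$ in every other coordinate. I would then check that each $\phi(x)$ is a genuine vertex of $\Gamma(R_1\times\cdots\times R_k)$: it is nonzero because $x\neq 0$, and it is a zero-divisor because, writing $e$ for the tuple with $1$ in every coordinate except a $0$ in the $i$-th coordinate (this $e$ is nonzero since $k\geq 2$), one has $\phi(x)\,e = (0,\dots,0)$.

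Next I would verify that $\phi$ is an isomorphism onto an induced subgraph. Injectivity is immediate, since the $i$-th coordinate recovers $x$. For adjacency, observe that $\phi(x)\,\phi(y) = (0,\dots,0,xy,0,\dots,0)$, which is the zero tuple if and only if $xy=0$. Hence $\phi(x)$ and $\phi(y)$ are adjacent in the product graph exactly when $x$ and $y$ are adjacent in $\Gamma(R_i)$; this both preserves and reflects adjacency, so the image of $\phi$ is an induced subgraph isomorphic to $\Gamma(R_i)$.

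Finally I would apply the standard fact that every induced subgraph of a perfect graph is itself perfect, which is immediate from the definition of perfectness (the requirement $\chi=\omega$ on every induced subgraph is inherited); equivalently, by the Strong Perfect Graph Theorem any odd hole or odd antihole witnessing the imperfectness of $\Gamma(R_i)$ is carried by $\phi$ to an identical induced odd hole or antihole in the product. Thus, if $\Gamma(R_1\times\cdots\times R_k)$ were perfect, its induced subgraph $\phi(\Gamma(R_i))\cong\Gamma(R_i)$ would be perfect, contradicting the choice of $i$. There is no deep obstacle; the only points requiring care are confirming that $\phi$ reflects non-adjacency (so that the copy is genuinely induced rather than merely a homomorphic image) and that each $\phi(x)$ is a nonzero zero-divisor of the product.
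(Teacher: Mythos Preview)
Your proof is correct and follows essentially the same approach as the paper: both embed $\Gamma(R_i)$ into the product via $x\mapsto(0,\dots,0,x,0,\dots,0)$ and transfer the odd hole or antihole guaranteed by the Strong Perfect Graph Theorem. Your version is in fact more careful than the paper's, since you explicitly verify that each $\phi(x)$ is a nonzero zero-divisor and that the embedding reflects non-adjacency, points the paper leaves implicit.
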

\begin{proof}
Let some $\Gamma(R_i)$ be non-perfect. Then by the Strong Perfect Graph theorem,  there exists an odd hole or anti-hole $H$ of length 5 or greater. Let $H$ have a length $l$. Then we write it as,  $v_1-v_2-\cdots-v_{l-1}-v_l-v_1$. Then a hole exists in $\Gamma(R_1\times\cdots\times R_k)$. Fill in the $i$th position with the vertices of $H$,  and fill the rest in with zeros. The hole is $(0, \cdots, 0, v_1, 0, \cdots, 0)-(0, \cdots, 0, v_2, 0, \cdots, 0)-\cdots-(0, \cdots, 0, v_{l-1}, 0, \cdots, 0)-(0, \cdots, 0, v_l, 0, \cdots, 0)-(0, \cdots, 0, v_1, 0, \cdots, 0)$.\\
The same proof can be used for an anti-hole. So if any $\Gamma(R_i)$ are non-perfect,  $\Gamma(R_1\times\cdots\times R_k)$ will also be non-perfect.
\end{proof}

\begin{note} 
The converse of Theorem 3.4 is not true. In the graph $\Gamma(\mathbb{Z}_2\times\mathbb{Z}_2\times\mathbb{Z}_2\times\mathbb{Z}_2\times\mathbb{Z}_2)$,  every $\Gamma(\mathbb{Z}_2)$ is perfect,  but we find the hole $(1,  1,  0,  0,  0)-(0,  0,  1,  1,  0)-(1,  0,  0,  0,  1)-(0,  1,  1,  0,  0)-(0,  0,  0,  1,  1)$.
\end{note}

\begin{theorem}\label{3.5} $\Gamma(R_1\times\cdots\times R_x)$ where each $R_i$ is a commutative ring with 1 is not regular if any $\Gamma(R_i)$ is not empty.
\end{theorem}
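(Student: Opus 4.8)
The plan is to prove non-regularity by exhibiting two vertices of $\Gamma(R_1\times\cdots\times R_x)$ of different degree. This is the natural strategy in this setting (it is exactly what drives Theorem \ref{2.15}), and it replaces the hole-embedding idea used for perfectness in Theorem \ref{3.4} by a degree comparison. Since some $\Gamma(R_i)$ is nonempty, I would relabel the factors so that $\Gamma(R_1)$ is nonempty; thus $R_1$ contains a nonzero zero-divisor $a$, so there is a nonzero $b\in R_1$ with $ab=0$. Throughout I use the standing assumption $x\ge 2$ of this section (for $x=1$ the assertion fails, e.g.\ $\Gamma(\mathbb{Z}_{p^2})$ is complete and hence regular), and I assume each factor ring is nontrivial so that $1\neq 0$ in each $R_i$. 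The two vertices to compare are $u=(1,0,\cdots,0)$ and $v=(a,0,\cdots,0)$, where $1$ is the unity of $R_1$. Both are genuine vertices: each is nonzero; $u$ is a zero-divisor because $u\cdot(0,1,0,\cdots,0)=0$ (here $x\ge 2$ is used); and $v$ is a zero-divisor because $v\cdot(b,0,\cdots,0)=0$. They are distinct, since a zero-divisor is never a unit, whence $a\neq 1$.

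The first key step is to show that every neighbor of $u$ is a neighbor of $v$. A vertex $w=(w_1,\cdots,w_x)$ is adjacent to $u$ exactly when $u\cdot w=(w_1,0,\cdots,0)=(0,\cdots,0)$, that is, when $w_1=0$ (and $w\neq 0$). For any such $w$ one computes $v\cdot w=(aw_1,0,\cdots,0)=(0,\cdots,0)$, so $w$ is adjacent to $v$ as well (and $w\neq v$ since $w_1=0\neq a$). Hence the neighborhood of $u$ is contained in that of $v$. The second key step is to produce one extra neighbor of $v$. I would take $w_0=(b,1_{R_2},0,\cdots,0)$: it is nonzero, it satisfies $v\cdot w_0=(ab,0,\cdots,0)=(0,\cdots,0)$ so $w_0$ is adjacent to $v$, and it is distinct from $v$ (their second coordinates differ). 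On the other hand $u\cdot w_0=(b,0,\cdots,0)\neq(0,\cdots,0)$ because $b\neq 0$, so $w_0$ is not adjacent to $u$. Combining the two steps gives $\deg(v)>\deg(u)$, so the graph cannot be regular.

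The only delicate point—and the step I would be most careful about—is guaranteeing this extra neighbor of $v$ uniformly, in particular without any case split on whether $a^2=0$. Choosing $w_0$ with its distinguishing nonzero entry in the \emph{second} coordinate (available precisely because $R_2$ is nontrivial and $x\ge 2$) avoids all dependence on whether $a$ annihilates itself: if instead one tried $w_0=(b,0,\cdots,0)$, the argument would break down exactly when $b=a$, i.e.\ when $a^2=0$. Everything else is routine coordinatewise multiplication, so once the vertices $u,v$ and the auxiliary neighbor $w_0$ are fixed, the conclusion that $\Gamma(R_1\times\cdots\times R_x)$ is not regular follows immediately.
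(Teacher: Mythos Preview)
Your proof is correct and follows essentially the same approach as the paper: both compare the vertex $(0,\ldots,0,1,0,\ldots,0)$ with $(0,\ldots,0,k,0,\ldots,0)$ (where $k$ is a nonzero zero-divisor in the nonempty $\Gamma(R_i)$), show that every neighbor of the first is a neighbor of the second, and then exhibit one extra neighbor of the second. The only cosmetic difference is the choice of that extra neighbor---the paper uses $(1,\ldots,1,k',1,\ldots,1)$ while you use $(b,1_{R_2},0,\ldots,0)$---and your version is arguably tidier since you explicitly verify the distinctness conditions and explain why placing the distinguishing entry in the second coordinate avoids any case split on whether $a^2=0$.
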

\begin{proof}
Take $\Gamma(R_1\times\cdots\times R_x)$. Let some $\Gamma(R_i)$ be non-empty. Consider the vertex $g=(0, \cdots, 0, 1, 0, \cdots, 0)$ that has a 1 at the $i^{th}$ index and 0 filled in all other indices. All neighbors of $g$ must be of the form $(a_1, a_2, \cdots, a_{i-1}, 0, a_{i+1}\cdots, a_{x-1}, a_x)$,  with a zero at the $i^{th}$ index and any value in the other indices,  not all zero. Let there are  $f$ many such vertices.
Since $\Gamma(R_i)$ is non-empty,  $\exists k\in \Gamma(R_i)$. Since $k$ is a zero divisor,  there must be some $k'\in \Gamma(R_i)$,  not necessarily distinct,  such that $k\cdot k'=0$. Consider the vertex $h=(0, \cdots, 0, k, 0, \cdots, 0)$ with $k$ in the $i^{th}$ index and the rest filled in with 0. This vertex shares an edge with all vertices that share an edge with $g$. So $h$ shares an edge with at least $f$ vertices. But it also shares an edge with $(1, \cdots, 1, k', 1\cdots, 1)$ which means $h$ shares an edge with at least $f+1$ vertices. This means $g$ and $h$ have a different number of neighbors,  so $\Gamma(R_1\times\cdots\times R_x)$ is not regular.
\end{proof}

\begin{theorem}\label{3.6} For arbitrary rings $R$ and $S$,  $cl(\Gamma(R\times S)) \geq cl(\Gamma(R)) + cl(\Gamma(S)) + |R'||S'|$ where $R'$ and $S'$ are any set of self-annihilating vertices in a maximal clique of $\Gamma(R)$ and $\Gamma(S)$.
\end{theorem}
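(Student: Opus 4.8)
The plan is to prove the bound constructively, by exhibiting inside $\Gamma(R\times S)$ an explicit clique whose cardinality is exactly $cl(\Gamma(R))+cl(\Gamma(S))+|R'||S'|$. Fix a maximal clique $A=\{a_1,\dots,a_p\}$ of $\Gamma(R)$, so that $p=cl(\Gamma(R))$ and $a_ia_j=0$ for all $i\neq j$, and let $R'\subseteq A$ be the chosen set of self-annihilating vertices (so $a^2=0$ for each $a\in R'$). Symmetrically fix a maximal clique $B=\{b_1,\dots,b_q\}$ of $\Gamma(S)$ with $q=cl(\Gamma(S))$ and self-annihilating subset $S'\subseteq B$. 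I would then propose the candidate vertex set
\[
\mathcal{C}=\{(a,0)\mid a\in A\}\;\cup\;\{(0,b)\mid b\in B\}\;\cup\;\{(a,b)\mid a\in R',\,b\in S'\}.
\]

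First I would check that $\mathcal{C}$ consists of $p+q+|R'||S'|$ pairwise distinct nonzero zero-divisors of $R\times S$: the three families are separated by which coordinates vanish (an ``$R$-axis'' vertex has second coordinate $0$, an ``$S$-axis'' vertex has first coordinate $0$, and a ``cross'' vertex has both coordinates nonzero), and within each family distinctness is inherited from $A$, $B$, and the pairs of $R'\times S'$ respectively.

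Next comes the heart of the argument: verifying that any two distinct members of $\mathcal{C}$ are adjacent, i.e.\ that their coordinatewise product is $(0,0)$. The axis-to-axis checks are immediate, since $(a_i,0)(a_j,0)=(a_ia_j,0)=(0,0)$ for $i\neq j$, likewise for the $S$-axis, and $(a,0)(0,b)=(0,0)$ always; so the $p+q$ axis vertices already form a clique. The essential use of the hypothesis appears when a cross vertex is involved: for $(a,b)$ with $a\in R'$, $b\in S'$ and any other clique vertex, the product has $R$-coordinate a product $a_ia_k$ and $S$-coordinate a product $b_jb_l$, and I must rule out the ``diagonal collisions'' where $a_i=a_k$ or $b_j=b_l$. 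This is exactly where $a^2=0$ (for $a\in R'$) and $b^2=0$ (for $b\in S'$) are needed: when the two $R$-coordinates coincide the self-annihilating property forces that coordinate to vanish, and likewise in $S$, while when the coordinates differ the maximal-clique relations $a_ia_k=0$ and $b_jb_l=0$ do the job. Since distinct cross vertices differ in at least one coordinate, this case analysis covers every pair.

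The main obstacle is precisely this diagonal-collision bookkeeping for the cross vertices: one has to confirm that in every mixed product at least one factor forces the $R$-coordinate to vanish and independently at least one factor forces the $S$-coordinate to vanish, and that the only situation in which a coordinate reduces to a single repeated factor is already covered by membership in $R'$ or $S'$. Once adjacency is established, $\mathcal{C}$ is a clique of size $p+q+|R'||S'|$, and since the clique number is the size of a largest clique we conclude $cl(\Gamma(R\times S))\ge cl(\Gamma(R))+cl(\Gamma(S))+|R'||S'|$.
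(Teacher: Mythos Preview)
Your proposal is correct and follows essentially the same construction as the paper: embed a maximum clique of $\Gamma(R)$ on the $R$-axis, a maximum clique of $\Gamma(S)$ on the $S$-axis, and adjoin the ``cross'' vertices $R'\times S'$. Your treatment is in fact more careful than the paper's on the point that matters most---you explicitly isolate the diagonal collisions (e.g.\ $(a,b)$ against $(a,0)$ or $(a,b')$) and note that these are precisely where the self-annihilation hypotheses $a^2=0$, $b^2=0$ are needed, whereas the paper asserts adjacency within $Y$ and between $Y$ and $X$ somewhat summarily.
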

\begin{proof}
Let $C$ be a maximal clique in $\Gamma(R)$ and $D$ be a maximal clique in $\Gamma(S)$. Construct an induced sub graph $X = \{(c, 0) or (0,  d) | c\in C,  d\in D\}$. Take two arbitrary,  distinct vertices in $X$,  call them $u$ and $v$.\\
\begin{enumerate}
\item[Case 1:] $u=(c_1,  0),  v=(c_2,  0)$\\
Since $c_1$ shares an edge with $c_2$,  $u$ and $v$ share an edge.\\
\item[Case 2:] $u=(0,  d_1),  v=(0,  d_2)$\\
since $d_1$ shares an edge with $d_2$,  $u$ and $v$ share an edge.\\
\item[Case 3:] $u$ and $v$ are not of the same form.\\
Then,  without loss of generality,  let $u = (c,  0)$ and $v = (0,  d)$. $u$ shares an edge with $v$.\\
So $X$ is a clique in $\Gamma(R\times S)$ with size $cl(\Gamma(R)) + cl(\Gamma(S))$.\\
Now consider $R'$,  the set of all self-annihilating vertices in $C$. Each vertex in $R'$ shares an edge with each other vertex in $R'$ because it is an induced sub-graph of a clique. It also shares an edge with every vertex in $C$. Likewise,  every vertex in $S'$,  the set of all self-annihilating vertices in $D$,  shares an edge with every other vertex in $S'$ and every vertex in $D$. Define the induced sub-graph $Y = \{ (r, s) | r\in R',  s\in S' \}$. Every vertex $(r, s) \in Y$ shares an edge with every other vertex in $Y$ and every vertex in $X$,  so $X\cup Y$ forms a clique size $cl(\Gamma(R)) + cl(\Gamma(S)) + |R'||S'|$.
\end{enumerate}
\end{proof}

\begin{corollary}
	Consider $n$ many arbitrary rings $R_1, R_2, \cdots R_n$. Then, 
	$cl(\Gamma(R_1\times R_2\cdots R_n)) \geq \sum_{i=1}^{n}cl(\Gamma(R_i))+\sum_{i\neq j,  i, j \in \{1, 2, \cdots n\}}|R_{i}'||R_{j}'|+\sum_{i\neq j\neq k; i,  j,  k \in \{1, 2, \cdots n\}}|R_{i}'||R_{j}'||R_{k}'|+\cdots + |R_{1}'||R_{2}'|\cdots|R_{k}'| $,  where each $R_{i}'$ is any set of self-annihilating vertices in a maximal clique in $\Gamma(R_{i})$.
\end{corollary}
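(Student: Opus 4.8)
The plan is to generalize directly the two-factor construction $X\cup Y$ from the proof of Theorem~\ref{3.6}, rather than to iterate that theorem. For each index $i$ I would fix a maximal clique $C_i$ in $\Gamma(R_i)$, so that $|C_i| = cl(\Gamma(R_i))$, and let $R_i'$ be the chosen set of self-annihilating vertices sitting inside $C_i$. The goal is to exhibit a single clique $W$ in $\Gamma(R_1\times\cdots\times R_n)$ whose cardinality equals the right-hand side of the corollary; the lower bound on $cl$ then follows immediately. The natural indexing of $W$ is by the nonempty subsets $S\subseteq\{1,\dots,n\}$ describing the set of coordinates on which a vertex is permitted to be nonzero.

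For the construction itself, I would treat singletons and larger supports differently. For a singleton $S=\{i\}$, include every vector whose $i$-th entry ranges over all of $C_i$ and whose other entries are $0$; these contribute $\sum_i |C_i| = \sum_i cl(\Gamma(R_i))$ vertices. For $|S|\geq 2$, include every vector whose entry at each $j\in S$ ranges over $R_j'$ and which is $0$ off $S$; the subset $S$ then contributes $\prod_{j\in S}|R_j'|$ vertices. Summing the multi-support contributions over all $S$ with $|S|\geq 2$ gives exactly $\sum_{i<j}|R_i'||R_j'| + \sum_{i<j<k}|R_i'||R_j'||R_k'| + \cdots + |R_1'|\cdots|R_n'|$, so $|W|$ matches the corollary's sum term by term. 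Since every entry drawn from a $C_i$ or an $R_i'$ is a nonzero zero-divisor, each constructed vector is nonzero with support exactly $S$; hence these vectors are pairwise distinct and each is a genuine vertex of the product graph.

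The heart of the argument, and the step I expect to be the main obstacle, is verifying that $W$ is a clique by working coordinate by coordinate. Take distinct $u,v\in W$ with supports $S_u,S_v$. At any coordinate outside $S_u\cap S_v$ one of the two factors is $0$, so that coordinate of $uv$ vanishes. The only real work is at a shared coordinate $m\in S_u\cap S_v$, where the two entries $a,b$ both lie in $C_m$. If $a\neq b$ they annihilate because $C_m$ is a clique. If $a=b$, then $u$ and $v$ cannot both be singletons (two distinct singletons supported at $m$ would carry distinct entries), so at least one of them is multi-support and therefore contributes an entry from $R_m'$ at coordinate $m$; hence $a=b\in R_m'$ is self-annihilating and $ab=a^2=0$. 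In every case $uv=0$, so $W$ is a clique. This diagonal case is precisely why the multi-support vectors must draw their entries from $R_j'$ rather than from all of $C_j$: two vectors that agree at a shared coordinate would fail to annihilate there unless that common entry squares to zero, which is exactly the defining property of $R_j'$.

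As an alternative I considered induction on $n$, applying Theorem~\ref{3.6} to $(R_1\times\cdots\times R_{n-1})\times R_n$. This route requires a strengthened inductive hypothesis that also tracks how many self-annihilating vertices are available in the constructed maximal clique of the partial product (which works out to $\prod_i(1+|R_i'|)-1$), so the bookkeeping is heavier than the direct construction while yielding nothing extra. I therefore prefer the explicit clique $W$ above, which reproduces the stated sum directly and isolates the self-annihilation property as the single point requiring care.
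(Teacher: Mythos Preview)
Your proposal is correct and follows essentially the same construction as the paper: the paper builds the sets $X_i$, $X_{ij}$, $X_{ijk},\dots$ exactly as you index them by supports $S\subseteq\{1,\dots,n\}$, and takes their union as the desired clique. Your coordinate-by-coordinate verification that $W$ is a clique---in particular the diagonal case $a=b$ handled via membership in $R_m'$---is more explicit than the paper's ``proceeding in this way the result follows,'' but the underlying idea is identical.
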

	\begin{proof}
		Extending a similar type of construction in the proof of the above theorem,  we can consider $C_1,  C_2,  \cdots C_n$,   a collection of maximal cliques in $\Gamma(R_1), \Gamma(R_2),  \cdots,  \Gamma(R_n)$ respectively. Construct an induced sub graph $X_{i} = \{(0,  0,  \cdots,  c_{i}, \cdots 0)| c_{i}\in C_{i}, \},  X =\bigcup_{i=1}^{n} {X_{i}}$,  where we place the $c_{i}$ in the $i-th$ coordinate. Then $X$ forms a click of cardinality $\sum_{i=1}^{n}cl(\Gamma(R_i))$. Then consider,  $X_{ij} = \{(0,  0,  \cdots,  c_{i},  \cdots c_j \cdots 0)| c_{i}\in R'_{i},  c_{j} \in R'_{j}\}$,  where  $R'_{i},  R'_{j}$ are any set of self annihilating vertices in maximal clique in $\Gamma(R_{i})$ and  $\Gamma(R_{j})$,  where we place the $c_{i}$ and $c_{j}$ in the $i-th$ and $j-th$ entries respectively. Set $Y = \bigcup_{i\neq j; i,  j \in \{1,  2,  \cdots n\}} X_{ij}$. Then $Y$ forms a click of cardinality $\sum_{i\neq j; i,  j \in \{1, 2, \cdots n\}}|R_{i}'||R_{j}'|$,  that is disjoint from $X$. In a similar fashion we can construct $X_{ijk}$ for each distinct triplets $i, j, k \in \{1, 2, \cdots n\}$ and call their union $Z$ and $Z$ gives a click of cardinality $\sum_{i\neq j\neq k;  i, j, k \in \{1, 2, \cdots n\}}|R_{i}'||R_{j}'||R_{k}'|$. Proceeding in this way the result follows.
	\end{proof}

\begin{lemma}\label{3.7} Consider $\Gamma(\mathbb{Z}_n)$ for arbitrary $n$. There is a maximal clique $M$ that contains all self-annihilating vertices.
\end{lemma}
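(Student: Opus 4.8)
The plan is to exhibit the self-annihilating vertices as already forming a clique, and then to enlarge that clique to a maximal one using finiteness of the graph. By Lemma~\ref{2.10}, a vertex of $\Gamma(\mathbb{Z}_n)$ annihilates itself exactly when it is a multiple of $n^*$, so the self-annihilating vertices are precisely the vertices of the form $an^*$.

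First I would check that these vertices are pairwise adjacent. Writing $n = p_1^{\alpha_1}\cdots p_m^{\alpha_m}$ and recalling that $n^* = p_1^{\beta_1}\cdots p_m^{\beta_m}$ with each $\beta_i$ equal to $\alpha_i/2$ rounded up, we have $2\beta_i \geq \alpha_i$ for every $i$, so $(n^*)^2$ is a multiple of $n$. Hence for distinct multiples $an^*$ and $bn^*$ we get $an^*\cdot bn^* = ab(n^*)^2 \equiv 0 \pmod{n}$, so they share an edge. This is the same adjacency already used in the proof of Theorem~\ref{2.12}, and it shows the collection $C$ of all self-annihilating vertices is a clique.

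Finally, since $\Gamma(\mathbb{Z}_n)$ has only finitely many vertices, the clique $C$ extends to a maximal clique $M \supseteq C$: repeatedly adjoin any vertex adjacent to every vertex currently in the set, a process that must terminate. The resulting $M$ is a maximal clique containing $C$, hence containing every self-annihilating vertex. I do not anticipate a genuine obstacle here; the only step requiring care is verifying that distinct multiples of $n^*$ are truly adjacent, which reduces to the observation $(n^*)^2 \equiv 0 \pmod n$ built into the definition of $n^*$, together with the standard fact that in a finite graph every clique sits inside some maximal clique.
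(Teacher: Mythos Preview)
Your proof is correct and follows essentially the same route as the paper: the paper's one-line proof cites Lemma~\ref{2.10} (self-annihilating vertices are exactly the multiples of $n^*$) and Theorem~\ref{2.12} (whose proof shows any two multiples of $n^*$ are adjacent because $(n^*)^2\equiv 0\pmod n$), which is precisely the content you unpack. The only cosmetic difference is that you invoke a generic finiteness argument to extend $C$ to a maximal clique, whereas the paper implicitly points to the explicit clique built in the proof of Theorem~\ref{2.12}.
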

\begin{proof}
Follows from Theorem \ref{2.12} and Lemma \ref{2.10}.
\end{proof}

\begin{theorem}\label{3.8} The clique number of $\Gamma(\mathbb{Z}_n\times\mathbb{Z}_m)$ has a lower bound of $cl(\Gamma(\mathbb{Z}_n)+cl(\Gamma(\mathbb{Z}_m)) + (\frac{n}{n^*}-1)(\frac{m}{m^*}-1)$.
\end{theorem}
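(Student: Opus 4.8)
The plan is to recognize this bound as a direct specialization of Theorem \ref{3.6} to the rings $R=\mathbb{Z}_n$ and $S=\mathbb{Z}_m$, once the relevant self-annihilating vertices are identified and counted. Theorem \ref{3.6} gives $cl(\Gamma(R\times S))\geq cl(\Gamma(R))+cl(\Gamma(S))+|R'||S'|$ for any sets $R'$ and $S'$ of self-annihilating vertices lying in a single maximal clique of $\Gamma(R)$ and $\Gamma(S)$ respectively. So it suffices to exhibit such an $R'$ in $\Gamma(\mathbb{Z}_n)$ with $|R'|=\frac{n}{n^*}-1$, and symmetrically an $S'$ in $\Gamma(\mathbb{Z}_m)$ with $|S'|=\frac{m}{m^*}-1$.

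First I would invoke Lemma \ref{2.10}, which states that a vertex of $\Gamma(\mathbb{Z}_n)$ annihilates itself precisely when it is a multiple of $n^*$. The nonzero multiples of $n^*$ among $\{1,\dots,n-1\}$ are $n^*,2n^*,\dots,(\frac{n}{n^*}-1)n^*$, so there are exactly $\frac{n}{n^*}-1$ self-annihilating vertices in $\Gamma(\mathbb{Z}_n)$. Next I would apply Lemma \ref{3.7}, which guarantees a single maximal clique of $\Gamma(\mathbb{Z}_n)$ containing all of these self-annihilating vertices simultaneously. Taking $R'$ to be this entire collection yields an admissible choice in the hypothesis of Theorem \ref{3.6} with $|R'|=\frac{n}{n^*}-1$, and repeating the argument verbatim for $\Gamma(\mathbb{Z}_m)$ produces $S'$ with $|S'|=\frac{m}{m^*}-1$. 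Substituting these cardinalities into the inequality of Theorem \ref{3.6} gives exactly the claimed bound $cl(\Gamma(\mathbb{Z}_n\times\mathbb{Z}_m))\geq cl(\Gamma(\mathbb{Z}_n))+cl(\Gamma(\mathbb{Z}_m))+(\frac{n}{n^*}-1)(\frac{m}{m^*}-1)$.

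The only genuine content beyond the citation is ensuring that $R'$ and $S'$ can each be taken to be the \emph{full} set of self-annihilating vertices, rather than some smaller subset split across distinct maximal cliques; this is precisely what Lemma \ref{3.7} supplies, so I anticipate no real obstacle. The main care needed is the boundary count of multiples of $n^*$, together with the observation that the formula degenerates gracefully in the square-free case: when $n^*=n$ there are no nonzero self-annihilating vertices, the term $\frac{n}{n^*}-1$ correctly equals $0$, and the bound remains valid (and sharp for the product with a prime factor).
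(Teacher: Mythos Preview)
Your proposal is correct and matches the paper's approach essentially line for line: the paper's proof simply cites Theorem~\ref{3.6} together with Lemma~\ref{2.10} and the proof of Theorem~\ref{2.12}, which is exactly your argument (your invocation of Lemma~\ref{3.7} is just the packaged form of the clique construction in the proof of Theorem~\ref{2.12}). Your added remarks on the count of multiples of $n^*$ and the square-free degeneration are accurate and make the citation explicit.
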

\begin{proof}
Follows from Theorem \ref{3.6} and the proof of Theorem \ref{2.12} and Lemma \ref{2.10}.
\end{proof}

\begin{theorem}\label{3.11} $\Gamma(R_1\times\cdots\times R_k)$ where $k \geq 2$ and $R_i$ is a commutative ring with 1 has a simplicial vertex iff some $\Gamma(R_i)$ has a simplicial vertex or some $|R_i| = 2$. 
\end{theorem}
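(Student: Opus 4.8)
The plan is to prove the two implications separately: the easy direction by two explicit constructions of a simplicial vertex, and the converse by a coordinatewise analysis of an arbitrary simplicial vertex of the product. Throughout I use that a neighbour of a vertex is determined slot-by-slot, and that $k\ge 2$ lets me place a unit in at least one unused coordinate.

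For the direction ($\Leftarrow$) I exhibit a simplicial vertex in each case. If some $|R_i|=2$, relabel so that $R_1=\{0,1\}$ and take $v=(0,1,\dots,1)$. This is a zero divisor because $v\cdot(1,0,\dots,0)=0$, and its only neighbour is $(1,0,\dots,0)$, since the unit $1$ in each slot $j\ge 2$ forces $w_j=0$ while $|R_1|=2$ forces $w_1=1$; a one-vertex neighbourhood is trivially a clique, so $v$ is simplicial. If instead some $\Gamma(R_i)$ has a simplicial vertex $s$, take $i=1$ and $v=(s,1,\dots,1)$; again $v$ is a zero divisor, and its neighbours are exactly the vectors $(w_1,0,\dots,0)$ with $w_1$ a nonzero element of the annihilator of $s$. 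Two such neighbours are adjacent iff $w_1w_1'=0$, and I will check that any two distinct nonzero annihilators of $s$ multiply to $0$: genuine neighbours of $s$ in $\Gamma(R_1)$ do so because $s$ is simplicial, and the only other annihilator is $s$ itself (when $s^2=0$), which kills every neighbour of $s$ by definition. Hence the neighbourhood of $v$ is a clique.

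For the converse, let $v=(v_1,\dots,v_k)$ be simplicial and classify each coordinate by its annihilator: $v_i$ is zero, a non-zero-divisor (annihilator $\{0\}$, a class that contains every unit), or a nonzero zero divisor. A neighbour $w$ satisfies $v_iw_i=0$ in every slot, so $w_i$ is forced to $0$ when $v_i$ is a non-zero-divisor, is free when $v_i=0$, and ranges over the annihilator of $v_i$ when $v_i$ is a nonzero zero divisor. I first dispose of zero coordinates: if some $v_i=0$, then for each nonzero $a\in R_i$ the vector with $a$ in slot $i$ and $0$ elsewhere is a neighbour of $v$, so simpliciality forces $ab=0$ for all distinct nonzero $a,b\in R_i$; a nonzero $b\neq 1$ would then give $b=1\cdot b=0$, which is impossible, so $R_i=\{0,1\}$ and $|R_i|=2$, finishing that case. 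We may therefore assume every $v_i\neq 0$. Since $v$ is a zero divisor there is a nonzero $u$ with $vu=0$, and any slot where $u$ is nonzero is a slot where $v_i$ is a nonzero zero divisor; fix such an index $i_0$.

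The heart of the argument is to show that $v_{i_0}$ is simplicial in $\Gamma(R_{i_0})$, which completes the proof. I argue by contraposition inside the factor: if the neighbourhood of $v_{i_0}$ were not a clique, there would be distinct $x,y$ with $v_{i_0}x=v_{i_0}y=0$ and $xy\neq 0$; the two vectors carrying $x$, respectively $y$, in slot $i_0$ and $0$ in every other slot are legitimate neighbours of $v$ (each zero entry satisfies its constraint, and since $k\ge 2$ and all $v_j\neq 0$ neither vector equals $v$), they are distinct, and their product is nonzero in slot $i_0$, contradicting simpliciality of $v$. The main obstacle is getting the classification right: the naive split into units and zero divisors is \emph{not} exhaustive over a general commutative ring with $1$, so the middle class must be phrased as non-zero-divisors, and in each subcase one must confirm that the two test neighbours are genuinely distinct from $v$ and from each other. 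Once the trichotomy is set up this way, the remaining verifications are routine and the theorem follows.
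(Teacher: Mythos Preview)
Your proof is correct and follows essentially the same approach as the paper: the same explicit simplicial vertices $(1,\dots,1,0,1,\dots,1)$ and $(1,\dots,1,s,1,\dots,1)$ for the easy direction, and the same coordinatewise case split for the converse (a zero coordinate forces $|R_i|=2$, while a zero-divisor coordinate yields a simplicial vertex in that factor via the test neighbours $(0,\dots,0,x,0,\dots,0)$). You are in fact more careful than the paper on two points it glosses over: the possibility $s^2=0$ in the forward construction, and the verification that your test neighbours are genuinely distinct from $v$.
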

\begin{proof}
Take arbitrary $\Gamma(R_1\times\cdots\times R_k)$. Let some $\Gamma(R_i)$ have a simplicial vertex $c$. Then consider the vertex $(1, \cdots, 1, c, 1, \cdots, 1)$ where $c$ is in the $i$th slot. Each neighbor of $(1, \cdots, 1, c, 1, \cdots, 1)$ must have 0 in every slot except the $i$th slot,  and the value of the $i$th slot must be a neighbor of $c$ in $\Gamma(R_i)$. Since each neighbor of $c$ shares an edge and each other slot is 0,  all such neighbors of $(1, \cdots, 1, c, 1, \cdots, 1)$ form a clique. So $\Gamma(R_1\times\cdots\times R_k)$ has a simplicial vertex.\\
\\
Let some $|R_i| = 2$. Then $(1, \cdots, 1, 0, 1, \cdots, 1)$ only shares an edge with $(0, \cdots, 0, 1, 0, \cdots, 0)$ making $(1, \cdots, 1, 0, 1, \cdots, 1)$ simplicial.\\
\\
Let $\Gamma(R_1\times\cdots\times R_k)$ have a simplicial vertex $v$. Also, 
assume all $|R_i| > 2$ and no $\Gamma(R_i)$ have any simplicial vertices. Consider arbitrary $v$ in $\Gamma(R_1\times\cdots\times R_k)$. Let $v$ have 0 at some index,  $v = (\cdots, 0, \cdots)$. Then since no $|R_i| = 2$,  there exists some vertex $a\in R_i$ that is not 0 or 1. $v$ then shares an edge with $(0, \cdots, 0, 1, 0, \cdots, 0)$ and $(0, \cdots, 0, a, 0, \cdots, 0)$ which do not share an edge. So for $v$ to be simplicial,  it cannot contain any 0. Let $v$ have $a$ at some index,  where $a$ is a zero divisor in its respective $\Gamma(R_i)$. $v = (\cdots, a, \cdots)$. Then $v$ shares an edge with every $(0, \cdots, 0, a', 0, \cdots, 0)$ where $a\cdot a' = 0$ in $\Gamma(R_i)$. $a$ is not simplicial since no $\Gamma(R_i)$ have any simplicial vertex,  so some neighbor $(0, \cdots, 0, a', 0, \cdots, 0)$ will not share an edge with another neighbor of the same form. So $v$ is not simplicial if it has any zero-divisors in its slots. For $v$ to be simplicial,  every slot must be a non-zero,  non-zero-divisor. However,  elements of that form are not vertices. So $\Gamma(R_1\times\cdots\times R_k)$ has no simplicial vertices,  which is a contradiction. The assumption that all $|R_i| > 2$ and no $\Gamma(R_i)$ have any simplicial vertices is false. So some $|R_i| > 2$ or some $\Gamma(R_i)$ has a simplicial vertex.
\end{proof}

\begin{theorem}\label{3.12} $\Gamma(R_1\times\cdots\times R_k)$ where $R_i$ is a commutative ring with 1 is non-chordal if any $\Gamma(R_i)$ is non-chordal.
\end{theorem}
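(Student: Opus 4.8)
The plan is to mimic the proof of Theorem \ref{3.4}, replacing the hole/antihole argument with a chordless-cycle argument. Suppose some $\Gamma(R_i)$ is non-chordal. Then it contains a chordless cycle $C\colon v_1 - v_2 - \cdots - v_l - v_1$ of length $l \geq 4$, meaning consecutive $v_j$ are adjacent while no two non-consecutive $v_j$ are adjacent in $\Gamma(R_i)$.

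First I would embed $C$ into the product by setting $w_j = (0, \cdots, 0, v_j, 0, \cdots, 0)$, with $v_j$ placed in the $i$th coordinate and zeros elsewhere. I would check that each $w_j$ is genuinely a vertex of $\Gamma(R_1\times\cdots\times R_k)$: it is nonzero because $v_j \neq 0$, and it is a zero-divisor because the element having $0$ in the $i$th slot and $1$ in every other slot annihilates it.

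The key step is to verify that this embedding preserves adjacency \emph{exactly}. For any $j, j'$ the product $w_j w_{j'}$ has $v_j v_{j'}$ in the $i$th slot and $0$ in every other slot, so $w_j w_{j'} = 0$ iff $v_j v_{j'} = 0$; that is, $w_j$ and $w_{j'}$ are adjacent in the product iff $v_j$ and $v_{j'}$ are adjacent in $\Gamma(R_i)$. Hence $v_j \mapsto w_j$ is an isomorphism of induced subgraphs, so $w_1 - w_2 - \cdots - w_l - w_1$ is again a cycle of length $l \geq 4$ and, crucially, it has no chord: a chord $w_j w_{j'}$ with non-consecutive indices would force $v_j v_{j'} = 0$, contradicting the chordlessness of $C$. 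Therefore $\Gamma(R_1\times\cdots\times R_k)$ contains a chordless cycle of length at least $4$ and is non-chordal.

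I do not expect a genuine obstacle here; the single point needing care is confirming that no chord is introduced, i.e. that the embedded cycle remains induced. Because adjacency in the product is governed entirely by the one nontrivial coordinate, the ``iff'' is exact, and the induced subgraph on $\{w_1, \cdots, w_l\}$ is literally a copy of the induced subgraph on $\{v_1, \cdots, v_l\}$, so chordlessness transfers directly.
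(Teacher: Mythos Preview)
Your proof is correct and follows essentially the same approach as the paper: embed the chordless cycle from $\Gamma(R_i)$ into the product by placing each vertex in the $i$th coordinate with zeros elsewhere. Your version is actually more careful than the paper's, since you explicitly verify the adjacency ``iff'' and hence that no chord is introduced, whereas the paper simply asserts that the embedded cycle witnesses non-chordality.
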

\begin{proof}
Consider arbitrary $\Gamma(R_1\times\cdots\times R_k)$. Then let some $\Gamma(R_i)$ be non-chordal. So there exists a cycle $a_1-a_2-\cdots-a_k-a_1$ greater than 3 with no chords. Then in $\Gamma(R_1\times\cdots\times R_k)$,  there is a cycle $(0, .., a_1, \cdots, 0)-(0, \cdots, a_2, \cdots, 0)-\cdots-(0, \cdots, a_k, \cdots, 0)-(0, \cdots, a_1, \cdots, 0)$,  which makes it non-chordal.
\end{proof}

\begin{lemma}\label{3.13} $\Gamma(R_1\times\cdots\times R_k)$ where $R_i$ is a commutative ring with 1 and $k\geq 2$ is non-chordal if more than one $|R_i| \geq 3$.
\end{lemma}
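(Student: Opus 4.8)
The plan is to exhibit an induced $4$-cycle (a chordless cycle of length $4$), which immediately witnesses non-chordality. Since more than one factor has size at least $3$, I would first relabel the rings so that $|R_1| \geq 3$ and $|R_2| \geq 3$; this is harmless, because permuting the coordinates gives an isomorphic product ring and hence an isomorphic zero-divisor graph. Because each of $R_1, R_2$ is a ring with $1$ having at least three elements, we have $0 \neq 1$ in each, so I may choose $a \in R_1$ with $a \neq 0, 1$ and $b \in R_2$ with $b \neq 0, 1$.

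Next I would introduce the four elements of $R_1 \times \cdots \times R_k$
\[
u_1 = (1, 0, 0, \ldots, 0), \quad u_2 = (0, 1, 0, \ldots, 0), \quad u_3 = (a, 0, 0, \ldots, 0), \quad u_4 = (0, b, 0, \ldots, 0),
\]
each carrying $0$ in every coordinate beyond the second, and verify that they are four distinct genuine vertices: each is nonzero (using $a \neq 0$, $b \neq 0$), $u_3$ differs from $u_1$ because $a \neq 1$, $u_4$ differs from $u_2$ because $b \neq 1$, and each is a zero-divisor since, as computed below, it annihilates one of the others.

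The heart of the argument is the computation of the six pairwise products. The four ``consecutive'' products $u_1 u_2$, $u_2 u_3$, $u_3 u_4$, $u_4 u_1$ are all equal to $(0, 0, \ldots, 0)$, since in every coordinate at least one of the two factors is $0$; these yield the edges of the cycle $u_1 - u_2 - u_3 - u_4 - u_1$. The two ``diagonal'' products are $u_1 u_3 = (a, 0, \ldots, 0)$ and $u_2 u_4 = (0, b, 0, \ldots, 0)$, which are nonzero precisely because $a \neq 0$ and $b \neq 0$, so both diagonals are non-edges. Hence the four vertices induce a chordless $4$-cycle, and $\Gamma(R_1 \times \cdots \times R_k)$ is non-chordal.

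The argument is short, and the only place the hypothesis is genuinely used is in the selection of $a$ and $b$; this is the point I would watch most carefully. Distinctness of these elements from $1$ guarantees $u_3 \neq u_1$ and $u_4 \neq u_2$, so that we really have four distinct vertices, while distinctness from $0$ guarantees both that $u_3, u_4$ are nonzero (hence legitimate vertices) and that the diagonal products are nonzero (hence non-edges). Having two factors of size at least $3$ is exactly what supplies such an element in each of two coordinates. In contrast to Theorem \ref{3.12}, no single factor need be non-chordal here: the induced $4$-cycle is produced entirely by the interaction of two nontrivial factors under coordinatewise multiplication.
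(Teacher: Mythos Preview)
Your proof is correct and follows essentially the same approach as the paper: after relabeling so that $|R_1|,|R_2|\geq 3$, both you and the paper exhibit the chordless $4$-cycle $(1,0,\ldots,0)-(0,1,\ldots,0)-(a,0,\ldots,0)-(0,b,\ldots,0)$ with $a\in R_1\setminus\{0,1\}$ and $b\in R_2\setminus\{0,1\}$. You simply give a more careful verification of distinctness, vertexhood, and the six pairwise products than the paper does.
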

\begin{proof}
In $\Gamma(R_1\times\cdots\times R_k)$,  let two or more $|R_i| \geq 3$. Without loss of generality,  let the first two slots be the $R_i$ with a magnitude greater than or equal to 3. Then $(1,  0, \cdots, 0)-(0,  1, \cdots, 0)-(a,  0, \cdots, 0)-(0,  b, \cdots, 0)$ where $a$ is a non-trivial element of $R_1$ and $b$ is a non-trivial element of $R_2$,  is a cycle with no chord. So $\Gamma(R_1\times\cdots\times R_k)$ is non-chordal.
\end{proof}

\begin{lemma}\label{3.14} $\Gamma(R_1\times\cdots\times R_k)$ where $R_i$ is a commutative ring with 1 is non-chordal if $k \geq 4$.
\end{lemma}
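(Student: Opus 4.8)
The plan is to exhibit an explicit chord-less cycle of length $4$ inside $\Gamma(R_1\times\cdots\times R_k)$ whenever $k\geq 4$, which immediately witnesses non-chordality. Since each $R_i$ contains the elements $0$ and $1$, I can restrict attention to vertices whose entries are all either $0$ or $1$, and since $k\geq 4$ it suffices to let the first four coordinates carry the non-zero entries while filling every remaining coordinate with $0$. For such $0/1$ vertices, writing $u\sim v$ when $u_iv_i=0$ for all $i$, adjacency is equivalent to the two supports being disjoint, and non-adjacency to the two supports sharing a coordinate in which both entries equal $1$ (as $1\cdot 1\neq 0$).

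The key step is the combinatorial design of the four supports so that the cyclically consecutive vertices are adjacent while the two diagonal pairs are not. I would take $a=(1,1,0,0,0,\cdots,0)$, $b=(0,0,1,1,0,\cdots,0)$, $c=(1,0,0,0,0,\cdots,0)$, and $d=(0,0,1,0,0,\cdots,0)$. These are four distinct non-zero elements, and each is a genuine vertex (a non-zero zero-divisor) since it has a neighbour among the others. I would then verify coordinate-wise that $a\sim b$, $b\sim c$, $c\sim d$, and $d\sim a$, each consecutive pair having disjoint support, so that $a-b-c-d-a$ is a cycle of length $4$. For the absence of chords, $a$ and $c$ both carry a $1$ in the first coordinate, so $a\not\sim c$, and $b$ and $d$ both carry a $1$ in the third coordinate, so $b\not\sim d$; these are the only two possible chords of a $4$-cycle, so the cycle is induced. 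A chord-less cycle of length $4>3$ shows $\Gamma(R_1\times\cdots\times R_k)$ is non-chordal.

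The main obstacle is purely the design in the second paragraph, not the verification. The naive choice of four vectors with pairwise-disjoint single-coordinate supports yields $K_4$, in which every pair is adjacent and hence every diagonal is a chord; it is therefore essential to \emph{thicken} two opposite vertices of the cycle so that each diagonal pair overlaps in exactly one coordinate while all four consecutive pairs remain disjoint. This overlap requirement is precisely what forces all four coordinates into play, and it explains why the bound $k\geq 4$ is sharp. Once the supports above are fixed, every remaining adjacency and non-adjacency is a routine coordinate-wise computation.
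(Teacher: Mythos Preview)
Your proof is correct and essentially identical to the paper's: the paper also exhibits an explicit induced $4$-cycle on $0/1$ vectors, namely $(1,1,0,0,\ldots,0)-(0,0,1,1,\ldots,0)-(1,0,0,0,\ldots,0)-(0,0,0,1,\ldots,0)$, differing from yours only in that its fourth vertex has support $\{4\}$ instead of $\{3\}$. Your additional commentary on why the naive singleton choice fails and why four coordinates are forced is sound and makes the construction less ad~hoc than the paper's bare listing.
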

\begin{proof}
Let $k>4$. Then $(1, 1, 0, 0, \cdots, 0)-(0, 0, 1, 1, \cdots, 0)-(1, 0, 0, 0, \cdots, 0)-(0, 0, 0, 1, \cdots, 0)$ is a chord-less cycle. So $\Gamma(R_1\times\cdots\times R_k)$ is non-chordal.
\end{proof}

\begin{lemma}\label{3.15} $\Gamma(\mathbb{Z}_{n_1}\times\mathbb{Z}_{n_2}\times\mathbb{Z}_{n_3})$ where at least one $n_i>2$ is non-chordal.

\end{lemma}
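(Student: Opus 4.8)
The plan is to produce an explicit chordless $4$-cycle, in the same spirit as Lemmas \ref{3.13} and \ref{3.14}. After relabeling the three coordinates I may assume $n_1 > 2$, so that $\mathbb{Z}_{n_1}$ contains an element $a$ with $a \neq 0,1$ (for instance $a = 2$, which is neither $0$ nor $1$ once $n_1 \geq 3$). The entire argument then lives among triples whose second and third entries lie in $\{0,1\}$ (legitimate since $n_2,n_3 \geq 2$), with the first coordinate doing all the work of preventing chords. Note that if two or more of the $n_i$ exceed $2$ the conclusion is already immediate from Lemma \ref{3.13}, but the construction below handles every case at once, so I would not bother splitting into subcases.

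The natural first attempt is the cycle $(1,0,0)-(0,1,0)-(a,0,0)-(0,0,1)-(1,0,0)$. Here the two ``ends'' $(1,0,0)$ and $(a,0,0)$ are non-adjacent precisely because $1\cdot a = a \neq 0$, which is exactly where $n_1>2$ is used. The subtlety, and the one real design decision in the proof, is that the other diagonal pair $(0,1,0)$ and $(0,0,1)$ multiply to $(0,0,0)$ and therefore form a chord. The fix is to replace the fourth vertex $(0,0,1)$ by $(0,1,1)$: then the two common neighbors of the non-adjacent pair $(1,0,0),(a,0,0)$ become $(0,1,0)$ and $(0,1,1)$, and these now fail to be adjacent because they share a $1$ in the second coordinate, namely $(0,1,0)\cdot(0,1,1) = (0,1,0) \neq (0,0,0)$.

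Concretely I would set $v_1=(1,0,0)$, $v_2=(0,1,0)$, $v_3=(a,0,0)$, $v_4=(0,1,1)$ and verify by coordinatewise multiplication that: (i) the four are distinct nonzero zero-divisors (each has a zero coordinate, hence is a non-unit, and each is annihilated by one of the others); (ii) the four cycle edges $v_1v_2$, $v_2v_3$, $v_3v_4$, $v_4v_1$ all equal $(0,0,0)$; and (iii) neither diagonal does, since $v_1v_3=(a,0,0)\neq 0$ and $v_2v_4=(0,1,0)\neq 0$. These computations are entirely routine; the only genuine obstacle is the choice of $v_4$, forced by the requirement that the two common neighbors of the non-adjacent pair be themselves non-adjacent. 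Having produced an induced $4$-cycle, I conclude that $\Gamma(\mathbb{Z}_{n_1}\times\mathbb{Z}_{n_2}\times\mathbb{Z}_{n_3})$ is non-chordal.
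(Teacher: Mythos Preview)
Your proof is correct and is essentially the same construction as the paper's: the paper puts the large coordinate last and writes the chordless $4$-cycle $(1,0,0)-(0,0,2)-(1,1,0)-(0,0,1)$, which under the coordinate swap $1\leftrightarrow 3$ (and with $a=2$) is exactly your cycle $(1,0,0)-(0,1,0)-(a,0,0)-(0,1,1)$. The only difference is expository---you motivate the choice of $v_4$ by first ruling out $(0,0,1)$, whereas the paper simply writes down the cycle.
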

\begin{proof}
Without loss of generality,  let $n_3 > 2$. Then, \\
$(1, 0, 0)-(0, 0, 2)-(1, 1, 0)-(0, 0, 1)$ is a chord-less cycle.
\end{proof}

\begin{theorem}\label{3.16} The only chordal $\Gamma(\mathbb{Z}_{n_1}\times \mathbb{Z}_{n_2}\times \cdots\times\mathbb{Z}_{n_k})$ where $n_i \geq 2$ and $k\geq 2$ are $\Gamma(\mathbb{Z}_2\times\mathbb{Z}_p)$,  $\Gamma(\mathbb{Z}_2\times\mathbb{Z}_{p^2})$ and $\Gamma(\mathbb{Z}_2\times\mathbb{Z}_2\times\mathbb{Z}_2)$.
\end{theorem}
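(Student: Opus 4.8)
The plan is to peel off impossible shapes of $(n_1,\dots,n_k)$ using the non-chordality results already in hand, reducing to a short finite list of candidates, and then to confirm chordality of the survivors directly. First I would bound $k$: by Lemma~\ref{3.14} no chordal product has $k\ge 4$, so $k\in\{2,3\}$. For $k=3$, Lemma~\ref{3.15} shows that as soon as one factor exceeds $2$ the product is non-chordal, so the sole survivor is $\Gamma(\mathbb{Z}_2\times\mathbb{Z}_2\times\mathbb{Z}_2)$. I would verify its chordality by exhibiting the graph outright: the three weight-one vertices $(1,0,0),(0,1,0),(0,0,1)$ form a triangle, while each weight-two vertex $(1,1,0),(1,0,1),(0,1,1)$ is a pendant attached to the unique weight-one vertex orthogonal to it; the triangle is then the only cycle, so the graph is chordal.

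For $k=2$, Lemma~\ref{3.13} forbids two factors of size $\ge 3$, so after relabeling we may take $n_1=2$ and study $\Gamma(\mathbb{Z}_2\times\mathbb{Z}_n)$. Theorem~\ref{3.12} together with Theorem~\ref{2.16} forces $\Gamma(\mathbb{Z}_n)$ to be chordal, i.e. $n\in\{p^x,\,2p,\,2p^2\}$. The two composite shapes are handled cheaply: for odd $p$ the Chinese Remainder Theorem (Theorem~\ref{1.11}) gives $\mathbb{Z}_{2p}\cong\mathbb{Z}_2\times\mathbb{Z}_p$ and $\mathbb{Z}_{2p^2}\cong\mathbb{Z}_2\times\mathbb{Z}_{p^2}$, so $\Gamma(\mathbb{Z}_2\times\mathbb{Z}_{2p})$ and $\Gamma(\mathbb{Z}_2\times\mathbb{Z}_{2p^2})$ are isomorphic to triple products with a factor larger than $2$ and hence non-chordal by Lemma~\ref{3.15}. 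When $n=p$ the graph is a star, and when $n=p^2$ it is easily seen chordal: the vertices $(1,0)$ together with the $(0,kp)$ form a clique, the $(0,\text{unit})$ are pendants on $(1,0)$, and each $(1,kp)$ attaches only to the clique part $\{(0,kp)\}$, so every cycle acquires a chord. These give exactly the listed graphs $\Gamma(\mathbb{Z}_2\times\mathbb{Z}_p)$ and $\Gamma(\mathbb{Z}_2\times\mathbb{Z}_{p^2})$.

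The genuinely delicate case is $n=p^x$ with $x\ge 3$, which must be shown non-chordal even though $\Gamma(\mathbb{Z}_{p^x})$ itself is chordal, so no factorwise obstruction applies. Here I would produce an explicit induced $4$-cycle. Using that $(1,0)$ is adjacent to every first-coordinate-zero vertex, that $(1,b)$ is adjacent only to $(0,b')$ with $bb'=0$, and that two first-coordinate-one vertices are never adjacent, I would take
$$(0,p)\;-\;(1,0)\;-\;(0,\,p^{x-1}+p)\;-\;(1,\,p^{x-1})\;-\;(0,p),$$
and check that the four consecutive products vanish while the two would-be chords do not: $(0,p)(0,p^{x-1}+p)$ has second coordinate $p^x+p^2\equiv p^2\not\equiv 0\pmod{p^x}$, and $(1,0),(1,p^{x-1})$ share a nonzero first coordinate. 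I expect the main obstacle to be exactly choosing these second coordinates \emph{uniformly} in $p$: the naive partner $2p$ for $p$ works for odd primes but fails at $p=2$, where $p\cdot 2p=2p^2\equiv 0\pmod{p^3}$ creates a chord, which is why the robust choice $p^{x-1}+p$ (whose product with $p$ is $\equiv p^2\not\equiv 0$ for all $x\ge 3$) is needed. Assembling the $k=2$ and $k=3$ analyses then yields precisely the three claimed chordal products.
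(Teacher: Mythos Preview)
Your proposal is correct and follows essentially the same strategy as the paper: reduce via Lemmas~\ref{3.13}--\ref{3.15} and Theorems~\ref{3.12}, \ref{2.16} to a short list of candidates, then handle each directly. Your tactical choices differ in two places and are in fact slight improvements: dispatching $\mathbb{Z}_2\times\mathbb{Z}_{2p}$ and $\mathbb{Z}_2\times\mathbb{Z}_{2p^2}$ by CRT plus Lemma~\ref{3.15} is cleaner than the paper's explicit $4$-cycles, and your $4$-cycle for $\mathbb{Z}_2\times\mathbb{Z}_{p^x}$ with $x\ge 3$ uses $(0,p^{x-1}+p)$ rather than the paper's $(0,(p-1)p)$, which keeps the four vertices distinct even at $p=2$.
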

\begin{proof}

Consider $\Gamma(\mathbb{Z}_2\times\mathbb{Z}_p)$. Since $\Gamma(\mathbb{Z}_p)$ has no vertices,  the only vertices of $\Gamma(\mathbb{Z}_2\times\mathbb{Z}_p)$ are $(1,  0)$ or of the form $(0,  x)$ where $0<x<p$. So the graph is a star making it chordal. \\

Consider $\Gamma(\mathbb{Z}_2\times\mathbb{Z}_{p^2})$. Assume that $\Gamma(\mathbb{Z}_2\times\mathbb{Z}_{p^2})$ is non-chordal. Then there exists a cycle $C$ length greater than $3$ that has no chord. Let $v$ be an arbitrary vertex in $C$.\\
Let $v$ have a multiple of $p$ as its second element,  $v=(a, bp)$. Then every vertex that is not a neighbor of $v$ in $C$ must have a non-zero non-multiple of $p$ as its second element. Therefore,  both neighbors of $v$ must have 0 as their second element so that they share an edge with their other neighbor. So both neighbors of $v$ are $(1, 0)$. We cannot repeat vertices so $v$ cannot have a multiple of $p$ as its second element. That means the only possible vertices in $C$ are $(1, 0)$ and $(0, b)$ where $b$ is a non-zero non-multiple of $p$. A cycle of size $4$ or greater cannot be constructed out of these vertices since we cannot write $(1, 0)$ twice and $(0, b)$ does not share an edge with itself. $C$ cannot be constructed,  so $\Gamma(\mathbb{Z}_2\times\mathbb{Z}_{p^2})$ is chordal. \\

Consider $\Gamma(\mathbb{Z}_2\times\mathbb{Z}_2\times\mathbb{Z}_2)$. The graph of $\Gamma(\mathbb{Z}_2\times\mathbb{Z}_2\times\mathbb{Z}_2)$ is shown below and is chordal. \\

\begin{tikzpicture}

\node (a) at (1, 1) {(1, 0, 1)};
\node (b) at (2, 2) {(0, 1, 0)};
\node (c) at (4, 2) {(0, 0, 1)};
\node (d) at (3, 3) {(1, 0, 0)};
\node (e) at (3, 4) {(0, 1, 1)};
\node (f) at (5, 1) {(1, 1, 0)};

\foreach \from/\to in {b/c, c/d, b/d, a/b, c/f, d/e}
 \draw (\from) -- (\to);

\end{tikzpicture}\\

To prove the converse,  let's assume the opposite. Let there be a chordal $\Gamma(\mathbb{Z}_{n_1}\times \mathbb{Z}_{n_2}\times \cdots\times\mathbb{Z}_{n_k})$ not listed. By Lemma \ref{3.13},  only one $n_i$ can be greater than 2. By Lemma \ref{3.14},  $k \leq 3$. By Theorem \ref{3.12},  if any $n_i$ are non-chordal,  $\Gamma(\mathbb{Z}_{n_1}\times \mathbb{Z}_{n_2}\times \cdots\times\mathbb{Z}_{n_k})$ will be non-chordal. So every $n_i$ must be $p^x$,  $2p$,  or $2p^2$ which was shown by Theorem \ref{3.15}. 

So the only possible $\Gamma(\mathbb{Z}_{n_1}\times \mathbb{Z}_{n_2}\times \cdots\times\mathbb{Z}_{n_k})$ are $\Gamma(\mathbb{Z}_2\times\mathbb{Z}_{p^x})$,  $\Gamma(\mathbb{Z}_2\times\mathbb{Z}_{2p})$,  $\Gamma(\mathbb{Z}_2\times\mathbb{Z}_{2p^2})$,  $\Gamma(\mathbb{Z}_2\times\mathbb{Z}_2\times\mathbb{Z}_{p^x})$,  $\Gamma(\mathbb{Z}_2\times\mathbb{Z}_2\times\mathbb{Z}_{2p})$ and $\Gamma(\mathbb{Z}_2\times\mathbb{Z}_2\times\mathbb{Z}_{2p^2})$.

In $\Gamma(\mathbb{Z}_2\times\mathbb{Z}_{p^x})$ where $x \geq 3$ and $p$ is prime,  $(1, p^{x-1})-(0, (p-1)p)-(1, 0)-(0, p)$ is a chord-less cycle.\\

In $\Gamma(\mathbb{Z}_2\times\mathbb{Z}_{2p})$ where $p \geq 3$ is a prime,  $(1, 0)-(0, 4)-(1, p)-(0, 2)$ is a chord-less cycle.\\

In $\Gamma(\mathbb{Z}_2\times\mathbb{Z}_{2p^2})$ where $p \geq 3$ is a prime,  $(1, 2p)-(0, p)-(1, 4p)-(0, p^2)$ is a chord-less cycle.\\

By Lemma 3.15,  $\Gamma(\mathbb{Z}_2\times\mathbb{Z}_2\times\mathbb{Z}_{p^x})$,  $\Gamma(\mathbb{Z}_2\times\mathbb{Z}_2\times\mathbb{Z}_{2p})$ and $\Gamma(\mathbb{Z}_2\times\mathbb{Z}_2\times\mathbb{Z}_{2p^2})$ are all non-chordal where $p \geq 3$.\\

So there are no other chordal $\Gamma(\mathbb{Z}_{n_1}\times \mathbb{Z}_{n_2}\times \cdots\times\mathbb{Z}_{n_k})$.
\end{proof}

\begin{lemma}\label{3.17} $D(\Gamma(\mathbb{Z}_{n_1}\times \mathbb{Z}_{n_2}\times \cdots\times\mathbb{Z}_{n_k}))$ has an upper bound of $2[D(\Gamma(\mathbb{Z}_{n_1})) + D(\Gamma(\mathbb{Z}_{n_2})) + \cdots + D(\Gamma(\mathbb{Z}_{n_k}))]$.
\end{lemma}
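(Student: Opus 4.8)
The plan is to exhibit an explicit dominating set of $\Gamma(\mathbb{Z}_{n_1}\times\cdots\times\mathbb{Z}_{n_k})$ assembled from minimum dominating sets of the factors, spending at most two product-vertices per factor-dominator. For each $i$ fix a minimum dominating set $D_i$ of $\Gamma(\mathbb{Z}_{n_i})$, so $|D_i|=D(\Gamma(\mathbb{Z}_{n_i}))$, and by \cite{bH77} these numbers are understood. For $c\in\mathbb{Z}_{n_i}$ write $e_i(c)=(0,\dots,0,c,0,\dots,0)$ for the product-element with $c$ in slot $i$ and $0$ elsewhere; this is a vertex whenever $c\neq0$ and $k\ge2$, since it has a zero coordinate. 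For each $d\in D_i$ pick once and for all a nonzero annihilator $d'\in\mathbb{Z}_{n_i}$ (one exists because $d$ is a zero-divisor, and $d'$ is itself a nonzero zero-divisor, hence a vertex). I would then set
\[
S=\bigcup_{i=1}^{k}\Bigl(\{\,e_i(d):d\in D_i\,\}\cup\{\,e_i(d'):d\in D_i\,\}\Bigr),
\]
so that $|S|\le 2\sum_{i=1}^{k}|D_i|=2\sum_{i=1}^{k}D(\Gamma(\mathbb{Z}_{n_i}))$, and the entire task reduces to checking that $S$ dominates every vertex.

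The one computation I would record first is that $e_i(c)\cdot w=0$ in the product if and only if $c\,w_i=0$ in $\mathbb{Z}_{n_i}$, since every coordinate of $e_i(c)\cdot w$ other than the $i$-th is automatically zero. Hence $e_i(c)$ dominates precisely those $w$ whose $i$-th coordinate is annihilated by $c$, and in particular it dominates every $w$ with $w_i=0$. The first and main case of the verification takes an arbitrary vertex $w=(w_1,\dots,w_k)$ having some coordinate $w_i$ that is a nonzero zero-divisor, i.e. a vertex of $\Gamma(\mathbb{Z}_{n_i})$. Because $D_i$ dominates $\Gamma(\mathbb{Z}_{n_i})$, either $w_i$ is adjacent to some $d\in D_i$, whence $d\,w_i=0$ and $e_i(d)\in S$ dominates $w$, or else $w_i=d$ for some $d\in D_i$, whence $d'w_i=d'd=0$ and $e_i(d')\in S$ dominates $w$. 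This second possibility is exactly where the factor of two is forced: an embedded dominator $e_i(d)$ need not be self-adjacent, so the companion vertex $e_i(d')$ is genuinely required to cover those product-vertices that coincide with $d$ in a coordinate.

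The remaining case is when no coordinate of $w$ is a nonzero zero-divisor, so each $w_i$ is either $0$ or a unit; since $w$ is a nonzero zero-divisor, at least one coordinate is $0$ and at least one is a unit. If some zero coordinate $w_i=0$ lies in a slot with $D_i\neq\varnothing$, then $e_i(d)$ for any $d\in D_i$ dominates $w$ by the observation above, and we are done. The genuinely delicate situation, which I expect to be the main obstacle, is when every zero coordinate of $w$ sits in a \emph{field} slot (that is, $n_i$ prime, so $\Gamma(\mathbb{Z}_{n_i})$ is empty and $D_i=\varnothing$): then $S$ as defined places no mass in any slot that could annihilate $w$. My plan to absorb this is to charge the extra coordinate-unit vertices needed in such field slots against the slack already present in the factor-of-two budget of the composite slots, using that each composite $n_j$ contributes $D(\Gamma(\mathbb{Z}_{n_j}))\ge1$ and hence a budget of at least $2$. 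The careful bookkeeping—verifying that this reallocation always dominates the surviving vertices while the total never exceeds $2\sum_i D(\Gamma(\mathbb{Z}_{n_i}))$, and isolating precisely which configurations of field versus composite factors make the estimate tight—is the step I anticipate demanding the most attention.
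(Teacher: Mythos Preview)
Your construction is essentially identical to the paper's: the paper lifts each $d\in D_i$ to $(0,\dots,d,\dots,0)$ (their set $A_i$, your $e_i(d)$) and adjoins one chosen annihilating neighbor for each such vertex (their set $B_i$, your $e_i(d')$), then runs the same two-case verification. You have, however, spotted a genuine gap that the paper's own proof glosses over: when a vertex $w$ has zeros only in coordinates $i$ with $n_i$ prime (so $D_i=\varnothing$ and hence $A_i=\varnothing$) and units elsewhere, the constructed set contains nothing that annihilates $w$. The paper's Case~2 simply asserts ``then $v$ shares an edge with some vertex in $A_i$'' without noticing that $A_i$ may be empty.

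Your proposed patch---reallocating the factor-of-$2$ slack from composite slots to cover the field slots---cannot succeed in general, because there may be no slack at all. Take $k=2$ with $n_1,n_2$ both prime: then $D(\Gamma(\mathbb{Z}_{n_1}))=D(\Gamma(\mathbb{Z}_{n_2}))=0$, so the asserted upper bound is $0$, yet $\Gamma(\mathbb{Z}_{n_1}\times\mathbb{Z}_{n_2})$ is a nonempty complete bipartite graph with domination number at least $1$. Thus the lemma as stated is actually false without an extra hypothesis (for instance, that each $n_i$ is composite), and the difficulty you isolated is not a bookkeeping technicality but the precise point where the argument---both yours and the paper's---breaks down.
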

\begin{proof}
Let $d_i$ be the domination number of $\Gamma(\mathbb{Z}_{n_i})$. Then each $\Gamma(\mathbb{Z}_{n_i})$ has a dominating set $D_i$ size $d_i$ Then consider the sets\\
$A_i = \{ (0, \cdots, 0, v, 0, \cdots, 0) \mid| v\in D_i\}$ where the $i$-th slot is filled with arbitrary vertex in $D_i$ and the rest are 0. Also consider $B_i$,  the set of neighbors of each vertex in $A_i$,  with only one neighbor for each vertex. Now consider $\cup_{i=1}^k (A_i\cup B_i)$ and arbitrary vertex $v \in \Gamma(\mathbb{Z}_{n_1}\times \mathbb{Z}_{n_2}\times \cdots\times\mathbb{Z}_{n_k})$.\\
\begin{enumerate}
\item[Case 1:] $v$ has an element $e$ in some $i$th slot that is a vertex of $\Gamma(\mathbb{Z}_{n_i})$.\\
If $e\in D_i$,  then $v$ shares an edge with the corresponding vertex in $B_i$,  and if $e\notin D_i$,  then $v$ shares an edge with some vertex in $D_i$.\\
\item[Case 2:] $v$ has 0 in some $i$th slot.\\

Then $v$ shares an edge with some vertex in $A_i$.\\
So if neither of these cases is true,  none of the elements of $v$ can be zero or a vertex in its corresponding $\Gamma(\mathbb{Z}_{n_i})$,  so the only neighbor of $v$ is $(0, .., 0)$ which means $v$ is not a vertex. Then $\cup_{i=1}^k (A_i\cup B_i)$ is a dominating set. $A_i$ and $B_i$ both have size $D(\Gamma(\mathbb{Z}_{n_i}))$ since it only has one vertex for each vertex in its corresponding $D_i$. So the size of $\cup_{i=1}^k (A_i\cup B_i)$ is  $2(D(\Gamma(\mathbb{Z}_{n_1})) + D(\Gamma(\mathbb{Z}_{n_2})) + \cdots + D(\Gamma(\mathbb{Z}_{n_k})))$ which is an upper bound of the domination number.
\end{enumerate}
\end{proof}

\begin{lemma}\label{3.18} $D(\Gamma(\mathbb{Z}_{n_1}\times \mathbb{Z}_{n_2}\times \cdots\times\mathbb{Z}_{n_k}))$ has a lower bound of $D(\Gamma(\mathbb{Z}_{n_1}))+D(\Gamma(\mathbb{Z}_{n_2}))\\
+\cdots+D(\Gamma(\mathbb{Z}_{n_k}))$.
\end{lemma}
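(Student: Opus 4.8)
The plan is to fix a \emph{minimum} dominating set $S$ of $\Gamma(\mathbb{Z}_{n_1}\times\cdots\times\mathbb{Z}_{n_k})$, so that $|S| = D(\Gamma(\mathbb{Z}_{n_1}\times\cdots\times\mathbb{Z}_{n_k}))$, and to extract from $S$, for each coordinate $i$, a dominating set $T_i$ of $\Gamma(\mathbb{Z}_{n_i})$ in such a way that the portions of $S$ responsible for distinct coordinates are pairwise disjoint. Summing the per-coordinate inequalities $|T_i|\ge D(\Gamma(\mathbb{Z}_{n_i}))$ over disjoint pieces of $S$ then produces the claimed bound.

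The main obstacle is that the naive embedded copy of $\Gamma(\mathbb{Z}_{n_i})$, namely the vertices $(0,\dots,0,x,0,\dots,0)$, can be dominated from \emph{outside} slot $i$: a single vertex such as $(0,1,0,\dots,0)$ annihilates every vector supported on the first coordinate, so these copies carry no local lower bound. To force dominators to live in coordinate $i$, I would instead probe with the vertices $v(x):=(1,\dots,1,x,1,\dots,1)$, where $x\in ZD(\mathbb{Z}_{n_i})$ sits in slot $i$ and every other entry equals $1$; this is a vertex because $x$ is a zero-divisor. The key computation is that a vertex $s$ is adjacent to $v(x)$ precisely when $s_j=0$ for all $j\neq i$ and $s_i x=0$. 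Hence the only elements of $S$ that can dominate $v(x)$ are $v(x)$ itself and the ``pure slot-$i$'' vectors $(0,\dots,0,s_i,0,\dots,0)$ with $s_i x=0$.

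With this adjacency fact I would set $P_i:=S\cap\{v(x):x\in ZD(\mathbb{Z}_{n_i})\}$ and let $Q_i$ be the set of pure slot-$i$ vectors in $S$, and define $T_i:=\{x:v(x)\in P_i\}\cup\{s_i:s\in Q_i,\ s_i\in ZD(\mathbb{Z}_{n_i})\}\subseteq ZD(\mathbb{Z}_{n_i})$. Checking that $T_i$ dominates $\Gamma(\mathbb{Z}_{n_i})$ is routine: for an arbitrary vertex $x$, the probe $v(x)$ is dominated by $S$, so either $v(x)\in P_i$ (giving $x\in T_i$) or some $s\in Q_i$ satisfies $s_ix=0$ with $s_i\neq 0$, which makes $s_i$ a nonzero zero-divisor lying in $T_i$ and equal or adjacent to $x$ in $\Gamma(\mathbb{Z}_{n_i})$; either way $x$ is dominated by $T_i$. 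Thus $|T_i|\ge D(\Gamma(\mathbb{Z}_{n_i}))$, while plainly $|T_i|\le |P_i|+|Q_i|$.

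Finally I would verify the disjointness that legitimizes the sum. Taking $k\ge 2$ (the case $k=1$ being equality), an element of $P_i$ has every entry except slot $i$ equal to $1$, whereas an element of $Q_j$ has every entry except slot $j$ equal to $0$; comparing the forced $0$'s and $1$'s shows that $P_i,P_j,Q_i,Q_j$ are pairwise disjoint for $i\neq j$, and that no $P_i$ meets any $Q_j$ (an element in both would be the all-ones unit or the zero vector, neither of which is a vertex). Therefore $\bigcup_i(P_i\cup Q_i)$ is a disjoint union inside $S$, giving $\sum_i(|P_i|+|Q_i|)\le |S|$. Chaining everything yields
\[
\sum_{i=1}^k D(\Gamma(\mathbb{Z}_{n_i})) \le \sum_{i=1}^k |T_i| \le \sum_{i=1}^k\bigl(|P_i|+|Q_i|\bigr) \le |S| = D(\Gamma(\mathbb{Z}_{n_1}\times\cdots\times\mathbb{Z}_{n_k})),
\]
which is the desired lower bound. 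The only delicate points are the adjacency computation for the probes $v(x)$ and the disjointness bookkeeping across coordinates; the rest is formal.
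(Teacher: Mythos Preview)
Your proof is correct and follows essentially the same approach as the paper: probe with vertices $v(x)=(1,\dots,1,x,1,\dots,1)$, observe that their only dominators in $S$ are either $v(x)$ itself or pure slot-$i$ vectors $(0,\dots,0,s_i,0,\dots,0)$, extract from these a dominating set of $\Gamma(\mathbb{Z}_{n_i})$, and sum using disjointness across coordinates. Your write-up is somewhat more explicit about the disjointness bookkeeping (separating $P_i$ and $Q_i$) and about the edge case $k=1$, but the underlying argument is the same as the paper's.
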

\begin{proof}
Let $D$ be an arbitrary dominating set of $\Gamma(\mathbb{Z}_{n_1}\times \mathbb{Z}_{n_2}\times \cdots\times\mathbb{Z}_{n_k})$. Consider the vertex $v=(1, \cdots, 1, a, 1, \cdots, 1)$ where $a$ in the $i$th slot is a vertex of $\Gamma(\mathbb{Z}_{n_i})$. The only possible neighbors of $v$ are of the form $(0, \cdots, 0, b, 0, \cdots, 0)$ where $b$ is a neighbor of $a$ in $\Gamma(\mathbb{Z}_{n_i})$. Construct a subset $A_i$ that is all vertices in $D$ of the form $(0, .., 0, b, 0, \cdots, 0)$ or $(1, \cdots, 1, b, 1, \cdots, 1)$ where $b$ is a vertex in $\Gamma(\mathbb{Z}_{n_i})$.\\
We claim that arbitrary $A_i$ has a size of at least $d_i$,  where $d_i$ is the domination number of $\Gamma(\mathbb{Z}_{n_i})$.\\
Assume otherwise. Then there are less than $d_i$ vertices of the form $(0, \cdots, 0, b, 0, \cdots, 0)$ and $(1, \cdots, 1, b, 1, \cdots, 1)$. Take some $a$ in $\Gamma(\mathbb{Z}_{n_i})$. Since some vertex in $D$ shares an edge with every vertex not in $D$,  arbitrary $v=(1, \cdots, 1, a, 1, \cdots, 1)$ either shares an edge with some $(0, \cdots, 0, b, 0, \cdots, 0)$ or is itself in $D$ and therefore in $A_i$. If $v$ is not in $D$,  then $v$ shares an edge with some $(0, \cdots, 0, b, 0, \cdots, 0)$ which means $a$ shares an edge with some $b$ in $\Gamma(\mathbb{Z}_{n_i})$. If $v$ is in $D$,  then $(1, \cdots, 1, a, 1, \cdots, 1)$ is in $A_i$. Construct a set $H$ that contains all $a$ in the $i$th slot of all such $v$. $H$ forms a dominating set of $\Gamma(\mathbb{Z}_{n_i})$ size less than $d_i$ which is a contradiction since $D(\Gamma(\mathbb{Z}_{n_i}) = d_i$. So $A_i$ has a size of at least $d_i$.\\
Next,  each $A_i$ is disjoint from each other since $b$ in the $i$th slot can never be 0 or 1,  which means there will be no duplicate vertices. So the sum of the sizes of each $A_i\subseteq D$ will be greater than or equal to the sum of the domination number of each $\Gamma(\mathbb{Z}_{n_i})$. This is a lower bound of the domination number. 
\end{proof}

Combining Lemma \ref{3.17} and \ref{3.18} we get the following.

\begin{theorem}\label{3.19} $D(\Gamma(\mathbb{Z}_{n_1}))+D(\Gamma(\mathbb{Z}_{n_2}))+\cdots+D(\Gamma(\mathbb{Z}_{n_k}))$ $\leq$ $D(\Gamma(\mathbb{Z}_{n_1}\times \mathbb{Z}_{n_2}\times \cdots\times\mathbb{Z}_{n_k}))$ $\leq$ $2[D(\Gamma(\mathbb{Z}_{n_1})) + D(\Gamma(\mathbb{Z}_{n_2})) + \cdots + D(\Gamma(\mathbb{Z}_{n_k}))]$.\\
\end{theorem}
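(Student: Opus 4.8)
The plan is to read off the double inequality directly from the two preceding lemmas, since they bound the \emph{same} quantity $D(\Gamma(\mathbb{Z}_{n_1}\times\mathbb{Z}_{n_2}\times\cdots\times\mathbb{Z}_{n_k}))$ from below and from above by expressions in the individual domination numbers $D(\Gamma(\mathbb{Z}_{n_i}))$. No new construction is needed: the entire substance has already been supplied by Lemmas \ref{3.17} and \ref{3.18}, and Theorem \ref{3.19} is a one-line assembly of those two estimates.

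First I would invoke Lemma \ref{3.18}, which shows that an arbitrary dominating set of the product graph splits into pairwise disjoint pieces $A_i$, one per factor, with each $|A_i|$ at least the domination number $d_i$ of $\Gamma(\mathbb{Z}_{n_i})$; summing over $i$ forces every dominating set to have size at least $\sum_{i=1}^k D(\Gamma(\mathbb{Z}_{n_i}))$, which is precisely the left-hand inequality. Then I would invoke Lemma \ref{3.17}, which exhibits the explicit dominating set $\bigcup_{i=1}^k (A_i\cup B_i)$ of cardinality $2\sum_{i=1}^k D(\Gamma(\mathbb{Z}_{n_i}))$, yielding the right-hand inequality as an upper bound. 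Chaining the two estimates about the common middle term produces the asserted sandwich.

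The only point worth verifying, and it is immediate, is that both lemmas are stated for the identical product $\Gamma(\mathbb{Z}_{n_1}\times\mathbb{Z}_{n_2}\times\cdots\times\mathbb{Z}_{n_k})$ with the summands indexed in the same way, so the two inequalities are directly compatible and require no reindexing. Consequently there is no genuine obstacle here: all the real work of constructing a small dominating set (for the upper bound) and of extracting per-factor dominating data from an arbitrary dominating set (for the lower bound) lives inside Lemmas \ref{3.17} and \ref{3.18}, and the proof of Theorem \ref{3.19} reduces to simply placing $D(\Gamma(\mathbb{Z}_{n_1}\times\mathbb{Z}_{n_2}\times\cdots\times\mathbb{Z}_{n_k}))$ between the two bounds.
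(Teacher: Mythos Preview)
Your proposal is correct and matches the paper's approach exactly: the paper simply states that Theorem \ref{3.19} follows by combining Lemma \ref{3.17} and Lemma \ref{3.18}, with no additional argument. Your write-up merely spells out what those two lemmas contribute to each side of the inequality, which is entirely faithful to the paper's intent.
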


The next theorem talks about the coefficients of a Domination Polynomial.\\

\begin{theorem}\label{3.20} In arbitrary $\Gamma(\mathbb{Z}_{p_1^{\alpha_1}p_2^{\alpha_2}\cdots p_k^{\alpha_k}})$,  $k \geq 3$,  and each $p_i$ is a distinct prime number,  the coefficient $c$ of the smallest degree of the domination polynomial is $(p_1-1)(p_2-1)\cdots(p_k-1)$.
\end{theorem}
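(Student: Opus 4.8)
The plan is to identify the coefficient $c$ as the number of minimum dominating sets and then to count these sets explicitly. Recall that the domination polynomial records in its coefficient of $x^i$ the number of dominating sets of cardinality $i$, so its lowest-degree term is $x^{\gamma}$ with $\gamma$ the domination number, and that coefficient is precisely the number of dominating sets of minimum size. By \cite{bH77} (as already invoked in Lemma \ref{2.20}) the domination number of $\Gamma(\mathbb{Z}_n)$ with $k \ge 3$ distinct prime factors is $k$, so I would first reduce the problem to counting the dominating sets of size exactly $k$.

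The combinatorial heart of the argument is to describe the neighborhoods of the prime vertices $p_1,\dots,p_k$. For each $i$, a vertex $u$ is a neighbor of $p_i$ iff $n \mid p_i u$, i.e. iff $(n/p_i)\mid u$; thus the neighbors of $p_i$ are exactly the nonzero multiples $\{m\,(n/p_i) : 1 \le m \le p_i-1\}$, each of which has greatest common factor exactly $n/p_i$ with $n$, so they constitute the type class $T_{n/p_i}$, which therefore has $p_i-1$ elements. Dually, a short computation shows that $v \in T_{n/p_i}$ annihilates a vertex $u$ iff $p_i \mid u$. With these facts the sufficiency direction is immediate: given any choice $v_i \in T_{n/p_i}$ for every $i$, the set $\{v_1,\dots,v_k\}$ is dominating, since every nonzero zero-divisor $u$ is divisible by at least one prime factor $p_i$ of $n$ and is hence annihilated by $v_i$. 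These sets have size $k$, so each contributes to $c$.

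For the necessity direction I would show that every size-$k$ dominating set $D$ arises in this way. Put $A_i = \{p_i\}\cup T_{n/p_i}$, the closed set of dominators of the vertex $p_i$. The $A_i$ are pairwise disjoint (distinct type classes, with $p_i\notin T_{n/p_j}$ because $p_i \ne n/p_j$ when $k\ge 3$), and $D$ must meet each $A_i$ in order to dominate $p_i$; as $|D|=k$, the set $D$ contains exactly one vertex $d_i$ in each $A_i$ and nothing further. The key step, and the place where the hypothesis $k\ge 3$ is essential, is to rule out $d_i = p_i$. By Lemma \ref{2.7} the class $T_{p_i}$ has more than one element (since $p_i \ne n/2$ when $k\ge 3$), so I may pick $u \in T_{p_i}$ with $u \ne p_i$; then $\gcd(u,n)=p_i$, and using the annihilation criteria above together with the existence of a third prime $p_l$ ($l\neq i,j$) one checks that no element of $D$ annihilates $u$ when $d_i = p_i$, a contradiction. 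Hence $d_i \in T_{n/p_i}$ for every $i$.

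Combining the two directions, the minimum dominating sets are in bijection with the tuples $(v_1,\dots,v_k)\in T_{n/p_1}\times\cdots\times T_{n/p_k}$, whence $c = \prod_{i=1}^{k}\lvert T_{n/p_i}\rvert = \prod_{i=1}^{k}(p_i-1) = (p_1-1)(p_2-1)\cdots(p_k-1)$. I expect the main obstacle to be the necessity step of excluding $d_i = p_i$: one must exhibit a specific witness vertex left undominated and verify that it escapes all three possible kinds of dominator ($p_i$ itself, and the two admissible forms of the remaining $d_j$), which is exactly where both $k\ge 3$ and the internal structure of the type classes are needed.
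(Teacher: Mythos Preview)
Your proposal is correct and follows essentially the same approach as the paper: both identify the coefficient as the number of minimum dominating sets of size $k$, show that any choice of one vertex from each $T_{n/p_i}$ dominates, and for necessity use that the only neighbors of elements of $T_{p_i}$ lie in $T_{n/p_i}$ together with $|T_{p_i}|\ge 2$ (via Lemma~\ref{2.7} and $k\ge 3$) to force exactly one representative per class. Your organization via the closed sets $A_i=\{p_i\}\cup T_{n/p_i}$ and the explicit exclusion of $d_i=p_i$ is a bit more streamlined than the paper's phrasing, but the substance is the same.
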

	\begin{proof}
		Consider $\Gamma(\mathbb{Z}_{p_1^{\alpha_1}p_2^{\alpha_2}\cdots p_k^{\alpha_k}})$,  $k \geq 3$. Let $n = p_1^{\alpha_1}p_2^{\alpha_2}\cdots p_k^{\alpha_k}$. Construct set $D$ that has exactly one element from each type class $T_{n/p_i}$. Since every vertex of $\Gamma(\mathbb{Z}_{p_1^{\alpha_1}p_2^{\alpha_2}\cdots p_k^{\alpha_k}})$ must be a multiple of some $p_i$,  every vertex shares an edge with some vertex in some $T_{n/p_i}$ and therefore,  $D$. So $D$ is a dominating set.\\
		We claim that for arbitrary minimal dominating set $D$,  exactly one vertex must be present from each type class $T_{n/p_i}$.\\
		Assume the opposite. Then there exists a dominating set $D$ that either doesn't have a vertex from some type class $T_{n/p_x}$ or has an extra vertex not in any type class $T_{n/p_x}$. Let $D$ not have any vertices from $T_{n/p_x}$. Since the only neighbors of vertices in $T_{p_x}$ are in $T_{n/p_x}$,  every vertex in $T_{p_x}$ must be in $D$. $p_x \neq n/2$ because otherwise $2p_x=n$,  $2p_x=p_1^{\alpha_1}p_2^{\alpha_2}\cdots p_k^{\alpha_k}$ and $2 = \frac{p_1^{\alpha_1}p_2^{\alpha_2}\cdots p_k^{\alpha_k}}{p_x}$ which is not possible. So by Lemma 1.10,  $T_{p_x}$ has more than one element. If $D$ contains at least one vertex from all $T_{n/p_x}$ except $T_{p_x}$,  then the size of $D$ is larger than $A$ above. So $D$ is not a minimal dominating set. If $D$ lacks any vertices from other $T_{n/p_i}$,  then for each vertex missing,  two or more must be added from $T_{p_x}$. In which case $D$ would also not be minimal. So $D$ must contain at least one vertex from each $T_{n/p_i}$. There also cannot be any additional vertices,  either from type classes not of the form $T_{n/p_x}$,  nor multiple from the same type class. Otherwise $D$ would not be minimal.\\
		Since there are $p_i - 1$ vertices in $T_{n/p_i}$,  the total amount of possible minimal dominating sets $D$ is $(p_1-1)(p_2-1)\cdots(p_k-1)$.
	\end{proof}

\begin{theorem}\label{3.21} $\Gamma(\mathbb{Z}_{p_1}\times\mathbb{Z}_{p_2}\times\cdots\times\mathbb{Z}_{p_k})$ is k-partite where every $p_i$ is prime.
\end{theorem}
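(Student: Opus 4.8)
The plan is to exploit the fact that each factor $\mathbb{Z}_{p_i}$ is a field, so that adjacency in the product is governed entirely by the pattern of zero and nonzero coordinates. First I would record the shape of the vertices: in $R = \mathbb{Z}_{p_1}\times\cdots\times\mathbb{Z}_{p_k}$, an element $(a_1,\dots,a_k)$ is a unit precisely when every $a_i\neq 0$, and equals $(0,\dots,0)$ precisely when every $a_i=0$. Hence the vertices of $\Gamma(R)$ are exactly those tuples having at least one zero coordinate and at least one nonzero coordinate.

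Next I would describe adjacency. Two vertices $u=(u_1,\dots,u_k)$ and $v=(v_1,\dots,v_k)$ are adjacent iff $u_iv_i=0$ for every $i$; since each $\mathbb{Z}_{p_i}$ is an integral domain, $u_iv_i=0$ forces $u_i=0$ or $v_i=0$. Thus $u$ and $v$ share an edge iff for every coordinate $i$ at least one of $u_i,v_i$ vanishes, which (writing $S(u)=\{\,i : u_i\neq 0\,\}$ for the support) is equivalent to $S(u)\cap S(v)=\varnothing$. Each support is a nonempty proper subset of $\{1,\dots,k\}$.

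With this reformulation the coloring is immediate: for a vertex $u$ let $m(u)=\min S(u)$ be its first nonzero coordinate, and set $Q_j=\{\,u : m(u)=j\,\}$ for $1\le j\le k$. These sets partition the vertex set. If $u,v\in Q_j$ then $j\in S(u)\cap S(v)$, so the supports are not disjoint and $u,v$ are nonadjacent; hence each $Q_j$ is independent, exhibiting $\Gamma(R)$ as $k$-partite. This mirrors the partition used in Theorem \ref{2.6}, with ``first nonzero coordinate'' playing the role there played by ``the first prime to which $a$ is coprime.''

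I do not expect a genuine obstacle here. The one point requiring care is the reduction of the edge condition to disjointness of supports, which rests squarely on each $\mathbb{Z}_{p_i}$ being a field, together with the observation that $\min S(u)$ is well defined because every vertex has nonempty support. One could alternatively route the argument through the strong type graph, but the direct support argument is shorter and self-contained.
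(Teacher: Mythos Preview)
Your proof is correct and follows essentially the same approach as the paper: your partition $Q_j=\{u:\min S(u)=j\}$ is precisely the paper's partition $S_j$ (vertices with a nonzero $j$th slot and zeros in all slots below $j$), and both arguments rest on the fact that each $\mathbb{Z}_{p_i}$ is a field. Your reformulation of adjacency as disjointness of supports is a clean way to phrase what the paper verifies directly, but the underlying decomposition and key idea are identical.
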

\begin{proof}
Consider some graph $\Gamma(\mathbb{Z}_{p_1}\times\mathbb{Z}_{p_2}\times\cdots\times\mathbb{Z}_{p_k})$. Construct a collection of subsets $S_i$ which is the set of all vertices with a non-zero term in the $i$th slot and zero in any slot less than $i$.\\
$S_1 = \{ (a, \cdots.) | a\in \mathbb{Z}_{p_1},  a\neq 0 \}$\\
$S_2 = \{ (0,  a,  \cdots) | a\in \mathbb{Z}_{p_2},  a\neq 0 \}$\\
$\cdots$\\
$S_k = \{ (0,  0,  \cdots,  0,  a) | a\in \mathbb{Z}_{p_k},  a\neq 0 \}$\\
We claim that no two vertices $u, v$ from the same subset $S_x$ share an edge.\\
Consider arbitrary vertices $u$ and $v$ in some $S_x$. By the definition,  the $x$th slot of $u$ and $v$ has a non-zero term from $\mathbb{Z}_{p_x}$. Since $\mathbb{Z}_{p_x}$ has no non-zero,  zero divisors,  the terms in the $x$th slot will not multiply to get 0,  so $u$ and $v$ do not share an edge.\\
No we claim that all the $S_i$ form a partition of $\Gamma(\mathbb{Z}_{p_1}\times\mathbb{Z}_{p_2}\times\cdots\times\mathbb{Z}_{p_k})$.\\
Assume there is a vertex $v$ not in any $S_i$. Let $v$ have a non-zero element $a$ in the $x$th slot. The by definition it is in $S_x$,  or in some $S_i$,  $i<x$ if some $i$th slot also has a non-zero element. So $v$ cannot have any non-zero elements and thus,  $v=0$ which is not a vertex. $\cup S_i$ is the vertex set of  $\Gamma(\mathbb{Z}_{p_1}\times\mathbb{Z}_{p_2}\times\cdots\times\mathbb{Z}_{p_k})$.\\
Assume there is a $v$ in multiple $S_i$,  say $S_x$ and $S_y$. Without loss of generality,  let $x<y$. Then the $x$th slot of $v$ has a non-zero term $a$ because it is in $S_x$. But the $x$th slot must be zero because $v$ is in $S_y$. That is a contradiction,  so there are no overlaps in the partition.\\
$S_i$ is a k-partite partition,  so $\Gamma(\mathbb{Z}_{p_1}\times\mathbb{Z}_{p_2}\times\cdots\times\mathbb{Z}_{p_k})$ is k-partite.
\end{proof}

\section{Zero divisor graph of the poset $D_n$}

Zero divisor graph of a poset has been studied in ~\cite{jW99}, ~\cite{jW100}, ~\cite{jW101}. We always have Clique number of the zero divisor graph of a ring does not exceed the Chromatic number of that. Beck conjectured that that for an arbitrary ring $R$,  they are same. But Anderson and Naseer~\cite{jW102} have shown that this is not the case in general,  namely they presented an example of a commutative local ring $R$ with $32$ elements for which Chromatic number is strictly bigger than the clique number.In ~\cite{jW102} Nimbhorkar,  Wasadikar and DeMeyer have shown that Beck's conjecture holds for meet-semilattices with $0$, i.e., commutative semigroups with $0$ in which each element is idempotent. Infact, it is valid for a much wider class of relational structures, namely for partially ordered sets (posets, briefly) with $0$. Now, to any poset $(P, \leq)$,  with a least element $0$ we can assign the graph $G$ as follows: its vertices are the nonzero zero divisors of $P$,  where a nonzero $x \in P$ is a called a zero divisor if there exists a non zero $y\in P$,  so that $L(x, y)=0,  L(x, y)=\{z\in P|z\leq x, y\} $. And  $x, y$ are connected by an edge if $L(x, y)=0$.
We discuss here some properties of the zero divisor graph of a specific poset $D_n$. Very often we used the prime factorization of the positive integer $n$. By abuse of notation,  let us call $D_n$ as the zero divisor graph of the poset $D_n$. Note that,  the vertex set of $D_n$ is the set of all factors of $n$ that are not divisible by some prime factor of $n$. Also,  note that,  two vertices in $D_n$ are connected by an edge if and only if they are mutually co-prime.\\

\begin{remark}[Properties of $D_n$]
$\phantom e$\\
\begin{enumerate}
	\item[i.] If $n =p^{m}$ for some prime $p$ and positive integer $m$,  then $D_{n}$ is trivial.\\

	So from now on consider $D_n$ where $n \neq p^{m}$ where $p$ and $m$ are as mentioned.\\
	\item [ii.] The diameter of $D_n$ is 3 iff $n$ has three distinct prime factors namely $p$,  $q$,  $r$. This is shown by the path $pq - r - p - qr$. Otherwise,  the diameter is 1 or 2,  as $D_{p^mq^n}$ is complete bipartite which has diameter 2 or in the case of $m = n = 1$ has diameter 1.  \cite{jW103} shows zero divisors of a poset have diameter of 1,  2,  or 3.
	\item[iii.] $D_n$ is complete only when $n=pq$,  where $p$ and $q$ are two distinct primes. $D_n$ is complete bipartite iff $n = p^{m}q^{s}$ where $m$ and $s$ are two positive integers.\\

	\item[iv.]We have the clique number of $D_n$ and a few coefficients of the clique polynomial.The clique number of $D_n$ is the number of distinct prime factors of $n$. If $n= p_{1}^{\alpha_{1}}p_{2}^{\alpha_{2}}p_{3}^{\alpha_{3}}\cdots p_{r}^{\alpha_{r}}$ where $p_{i}$'s are  distinct primes $\forall i$,  any set of vertices $\{ p_{1}^{\beta_{1}},  p_{2}^{\beta_{2}},  p_{3}^{\beta_{3}}\cdots p_{r}^{\beta_{r}}\}$,  where $1\leq \beta_{i}\leq \alpha_{i}$ $\forall i$ forms a maximal clique. This is a clique because no two vertices in a clique can have a common prime factor. Also,  if any vertex of a clique has more than one prime factor,  the clique will not be maximal. Hence the clique number is $r$,  the number of distinct primes of $n$. And the leading coefficient in the clique polynomial is $\alpha_{1}\alpha_{2}\cdots\alpha_{r}$. The coefficient of $x^{r-1}$ is $\sum_{i=1}^{r}(\alpha_1\alpha_2\cdots \alpha_{i-1}\alpha_{i+1}\cdots \alpha_{r})+\binom {r}{2}\alpha_1\alpha_2\cdots\alpha_r.$ Reason: Consider a clique of size $r-1$. If all the vertices has single prime factors then,  there are $\sum_{i=1}^{r}(\alpha_1\alpha_2\cdots \alpha_{i-1}\alpha_{i+1}\cdots \alpha_{r})$ many of this type,  as a typical clique of this type is a set of the form $\{p_{1}^{\beta_{1}}, p_{2}^{\beta_{2}}, \cdots p_{i-1}^{\beta_{i-1}}, p_{i+1}^{\beta_{i+1}}, \cdots p_{r}^{\beta_{r}} \}$,  where $1\leq \beta_{j}\leq \alpha_{j}\forall j \in \{1, 2, \cdots r\}$. Otherwise,  exactly one vertex will contain two primes. And in that case we will obtain $\binom {r}{2}\alpha_1\alpha_2\cdots\alpha_r$ many such clique sets with cardinality $r-1$. No element in a clique set can have three distinct primes in it's prime factorization. Hence the result follows.\\

	\item[v.]The domination number of $D_n$ is the number of distinct prime factors of $n$,  same as the clique number,  as any dominating set must not omit a prime factor of $n$. If some $p_{i}$ is missing from a set of vertices $V$,  then the vertex $p_{1}p_{2}\cdots p_{i-1}p_{i+1}\cdots p_{r}$ is not adjacent to any vertex in $V$. Furthermore,  if we let $V$ be the set of all distinct primes of $n$,  each vertex in $D_n$ must share an edge with at least one vertex in $V$ because each vertex in $D_n$ must omit at least one prime of $n$ from its prime factorization.\\

	\item[vi.]$D_{n}$ is regular iff $n= (pq)^{m}$ for some positive integer $m$. If $n = p^{m}q^{r},  m\neq r$,  then $D_{n}= K_{m, n}$ which is not regular. Then,  if $n$ has more than two distinct primes in it's prime factorization,  $p$ and $pq$ are vertices with a different degrees. Every vertex that shares an edge with $pq$ shares an edge with $p$,  but $p$ shares an edge with $q$ while $pq$ does not,  making the graph non-regular.\\

	\item[vii.]In \cite{jW100},  it is discussed that the girth of the zero divisor graph of any poset is 3, 4,  or $\infty$. The girth of $D_{n}$ is $\infty$ iff $n= p^{m}q$,  where $p$ and $q$ are two distinct primes and $m$ is a positive integers bigger than $1$. The girth of $D_{n}$ is $4$,  if and only if $n= p^{m}q^{r}$,  where $p$ and $q$ are two distinct primes and $m$ and $r$ are both positive integers bigger than $1$. Otherwise,  the girth of $D_{n}$ is $3$,  because if $n$ has at least $3$ different prime factors $p$,  $q$ and $r$,  then $p-q-r-p$ is a triangle in $D_{n}$.\\

	\item[viii.] $D_n$ is not perfect if $n$ is the product of least five different primes $p, q, r, s, t$ in it's prime factorization,  then $ps-qt-pr-qs-tr-ps$ is a cycle of length five in $D_n$. Hence by Strong perfect graph theorem $D_n$ is not perfect.\\
		Suppose $n$ has 4 distinct prime factors $p$,  $q$,  $r$ and $s$. Assume there is an odd cycle of length 5 or greater that contains a vertex $v$ that is the product of two such primes.  Let $v = p^xq^y$. Then the two neighbors of $v$ cannot be a multiple of $p$ or $q$. Suppose the neighbors both consists of  $r^a$ for some positive integer $a$. Then,  we get part of the cycle as $r^a - p^xq^y - r^b$ for another positive integer $b$. Then,  $r^a$ will necessarily share an edge with the other neighbors of $r^b$ making the cycle length 4. So the neighbors of $v$ must have both $r$ and $s$. Additionally,  these part of the cycle must be of the form $r^a - p^xq^y - r^ws^z$,  otherwise we get a cycle of length $4$ again. But any vertex that shares an edge with $r^ws^z$ must also share an edge with $r^a$ making such a cycle impossible. This means any odd cycle length greater than 5 cannot contain a vertex with two or more prime factors,  making an odd cycle length greater than 4 impossible.The other two situations when $v$ consists of only one prime,  or three primes also gives contradiction.Thus $D$ is perfect iff $n$ has 4 or fewer prime factors (the $n$ $< 4$  case follows).\\
	
	\item[ix.] $D_n$ is chordal iff $n = p^mq$ or $n = pqr$ where $p$,  $q$ and $r$ are distinct primes and $m \geq 1$. For if $n$ is not of that form,  $p-q-p^{2}-q^{2}-p$ or $p - q - p^2 - qr - p$ or $p - r - pq - rs - p$ will give holes of length greater than $3$ in respective $D_n$'s.\\

	\item[x.]Let,  $n$ be a square free positive integer. Then,  it's simplicial vertices are precisely those factors of $n$ which misses exactly one prime in it's prime factorization. Now,  suppose $n$ is not square free. Then,  if all primes in it's prime factorization are not square free,  it has no simplicial vertex. Otherwise,  the simplicial vertices are precisely those which misses exactly one square free prime factor. For example,  if $n = p^2q^2r$,  $pq$,  $p^2q$,  $pq^2$ and $p^2q^2$ are the only simplicial vertices because $r$ is the only square free prime factor.\\

	\item[xi.] The only planar $D_n$ has $n$ of the form $n = p^mq$,  $p^mq^2$,  $pqr$ or $p^2qr$. First,  let $n$ have only 2 prime factors. We will first examine this case. If $n = p^mq^l$ where $l \geq 3$ and $m \geq 3$,  then $K_{3, 3}$ is a subgraph of $D_n$ and therefore a minor of $D_n$. Then by Wagner's theorem,  $D_n$ is non planar. But in the case of $p^mq$,  $D_n$ is a star,  so it is planar. And in the case of $p^mq^2$,  the graph can be drawn without any crossing edges. Next,  let's examine $n$ with 3 prime factors. If $n = pqr$ or $n = p^2qr$ the graph is clearly planar if drawn. But,  if $n = p^mqr$ where $m \geq 3$,  The subgraph consisting of $p$,  $p^2$,  $p^3$,  $q$,  $r$ and $qr$ form $K_{3, 3}$ if we delete the edge between $q$ and $r$. Then by Wagner's theorem the graph is non-planar since $K_{3, 3}$ is a minor. Next,  if $n = p^mq^lr$,  where $m \geq 2$ and $l \geq 2$ the set of vertices $q$,  $q^2$,  $p$,  $p^2$,  $r$,  $pr$ and $qr$ is a subdivision of $K_5$. Then,  by Kuratowski's theorem,  the graph is non-planar. So the only planar $D_n$ with only 3 primes in $n$ are $pqr$ and $p^2qr$. Lastly,  consider the case where $n$ has 4 primes in its prime factorization,  $n = pqrs$. Then,  the vertex set of $p$,  $q$,  $r$,  $s$,  $pq$ and $rs$ can be made isomorphic to $K_5$ by contracting the edge between $pq$ and $rs$ to make a single vertex. Therefore,  $K_5$ is a minor of $D_n$ for this case,  and by Wagner's theorem the graph is non-planar.\\

	\item[xii.] $D_n$ is Eulerian iff the power of each prime in the prime factorization of $n$ is even.
	For,  if $n$ has a prime $p^{\alpha}$ that appears in it's prime factorization where $\alpha$ is odd,  then the vertex $\frac{n}{p^\alpha}$ has odd degree,  otherwise every vertex has even degree.\\

	\item[xiii.] If $n$ is square free,  then we have the edge cardinality of $D_n$ as $\sum_{i=1}^{r-1}2^{r-i-1}\binom{r}{i}-2^{r-1}-1$,  where $r$ is the number of distinct primes of $n$.
	For,  if we consider $n= p_{1}p_{2}\cdots p_{r}$,  where $p_{i}$'s are distinct primes,  then the degree of each vertex $p_{i}$ is $\sum_{i=1}^{r-1}\binom{r-1}{i}= 2^{r-1}-1$ giving $r(2^{r-1}-1)$ to the degree sum of the vertices. Similarly each vertex $p_{i}p_{j}$ is adjacent to $\sum_{i=1}^{r-2}\binom{r-2}{i} =2^{r-2}-1$
	many vertices,  giving $\binom{r}{2}(2^{r-1}-2)$ in the degree sum. Proceeding in this way,  we obtain the sum of the vertex degrees are $\sum_{i=1}^{r-1}\binom{r}{i}(2^{r-i}-1) = \sum_{i=1}^{r-1}\binom{r}{i}2^{r-i}-2^{r}-2$. Then,  as the sum of vertex degrees is twice the edge cardinalities the result follows.\\

	\item[xiv.] We have a lower bound for Independence number of $D_n$. Let,  
	$n= p_{1}^{\alpha_{1}}p_{2}^{\alpha_{2}} \cdots p_{r}^{\alpha_{r}}$,  where $p_{i}$'s are distinct primes.Then if $I$ is the independence number of $D_n$, \\$ I\geq Max_{1\leq i\leq r}[ \alpha_{i}\{1+\sum \alpha_{i_1}\alpha_{i_2}\cdots \alpha_{i_l} |\alpha_{i}\neq \alpha_{i_j}\neq \alpha_{i_k},  j, k \in\{1, 2, \cdots l\}\}]$,  $\{\alpha_{i_1}, \alpha_{i_2}\cdots \alpha_{i_l}\}$ varies over all non empty proper subset of $\{\alpha_1, \alpha_2\cdots \alpha_{i-1},  \alpha_{i+1}\cdots \alpha_r\}$\\
	\begin{proof} Consider any independent set containing  $p_{i}$ from the list of primes in the prime factorization of $n$. Then,  the largest possible independent set containing $p_{i}$,  will have $p_{i}$ as a factor in all it's vertices. So,  that must contain $p_i,  p_{i}^{2},  \cdots,  p_{i}^{\alpha_{i}}$ giving $\alpha_{i}$ many vertices in the independent set. In order to maximize the cardinality of the set,  we need to consider all possible factors of $n$ that has a factor $p_{i}$ and that misses atleast one prime in the prime factorization of $n$. Thus we get\\
$p_{i}p_{1}, p_{i}p_{1}^{2},  \cdots p_{i}p_{1}^{\alpha_{1}}, p_{i}p_{2}, p_{i}p_{2}^{2},  \cdots p_{i}p_{2}^{\alpha_{2}},  \cdots p_{i}p_{r}, p_{i}p_{r}^{2},  \cdots p_{i}p_{r}^{\alpha_{r}}$ are inside the independent set giving $\alpha_{i}(\alpha_{1}+\alpha_{2}+\cdots \alpha_{i-1}+\alpha_{i+1}+\cdots \alpha_{r})=\alpha_{i}\sum_{j=1, j\neq i}^{r}\alpha_{j}$ many vertices. Similarly,  we get more $\alpha_{i}\sum_{j=1, i\neq j\neq k}^{r}\alpha_{j}\alpha_{k}$ many vertices from the factors of $n$ that contains $p_{i}$ and that are product of three primes. Proceeding in this way get the necessary result. 
\end{proof}
	\item [xv.] Let,  $n$ be square free. Then,  a lower bound of the Independence number of $D_n$ is $ 2^{r-1}-r$,  where $r$ is the number of prime factors of $n$. If,  
	$n= p_{1}p_{2} \cdots p_{r}$,  where $p_{i}$'s are distinct primes,  then whenever $I$ is the independence number of $D_n$,  $I \geq 2^{r-1}-r$.
	\begin{proof} Consider any independent set in $D_n$. If we pick up any element from that set,  that is divisible by some $p_{i}$,  then,  all possible proper divisors of $n$,  that has $p_{i}$ as a factor forms an Independent set of $D_n$ and cardinality of that set is  $2^{r-1}-r$. Hence the result follows.
\end{proof}

\end{enumerate}

\end{remark}

\section{acknowledgment}
The authors acknowledge Dr. Lisa DeMeyer for introducing this topic to us by an excellent presentation, which motivated us to work in this area.

\bibliographystyle{amsplain}

\end{document}